\theoremstyle{plain}
\newtheorem{thm}{Theorem}[section]
\newtheorem{lem}[thm]{Lemma}
\newtheorem{prop}[thm]{Proposition}
\newtheorem{cor}[thm]{Corollary}
\theoremstyle{definition}
\newtheorem{defn}{Definition}[section]
\newtheorem{rem}{Remark}[section]
\newcommand{\Rr}{\mathbb{R}}
\newcommand{\Nn}{\mathbb{N}}
\newcommand{\I}{\mathcal{I}}
\begin{document}

\setstcolor{red}

\title[Dynamics of piecewise contractions]
{Multi-dimensional piecewise contractions are asymptotically periodic}

\author[Gaiv\~ao]{Jos\'e Pedro Gaiv\~ao}
\thanks{J. P. Gaiv\~ao was partially supported by the Project CEMAPRE/REM - UIDB/05069/2020 - financed by FCT/MCTES through national funds.}
\address{Departamento de Matem\'atica and CEMAPRE, ISEG\\
Universidade de Lisboa\\
Rua do Quelhas 6, 1200-781 Lisboa, Portugal}
\email{jpgaivao@iseg.ulisboa.pt}

\author[Pires]{Benito Pires}\thanks{
B. Pires was partially supported by grants {\#}2019/10269-3 and {\#}2022/14130-2,
S\~ao Paulo Research Foundation (FAPESP)}
\address{Departamento de Computa\c c\~ao e Matem\'atica\\ Faculdade de Filosofia, Ci\^encias e Letras, Universidade de S\~ao Paulo\\ Ribeir\~ao Preto, SP, 14040-901, Brazil}
\email{benito@usp.br}

\date{\today}  

\begin{abstract}
Piecewise contractions (PCs) are piecewise smooth maps that decrease distance between pairs of points in the same domain of continuity. The dynamics of a variety of systems is described by PCs. During the last decade, a lot of effort has been devoted to proving that in parametrized families of one-dimensional PCs, the $\omega$-limit set of a typical PC consists of finitely many periodic orbits while there exist atypical PCs with Cantor $\omega$-limit sets. In this article, we extend these results to the multi-dimensional case. More precisely,  we provide criteria to show that an arbitrary family $\{f_{\mu}\}_{\mu\in U}$ of locally bi-Lipschitz piecewise contractions $f_\mu:X\to X$ defined on a compact metric space $X$ is asymptotically periodic for Lebesgue almost every parameter $\mu$ running over an open subset $U$ of the $M$-dimensional Euclidean space $\mathbb{R}^M$. As a corollary of our results, we prove that piecewise affine contractions of $\Rr^d$ defined in generic polyhedral partitions are asymptotically periodic.
\end{abstract}

\maketitle
\tableofcontents

\section{Introduction}

Piecewise contractions (PCs) are piecewise smooth self-maps of a metric space that act on each of its continuity domains by decreasing the distance between pairs of points. Their importance stems from  countless dynamical systems whose dyna\-mics is governed by them: pseudo-billiards and switched server systems \cite{ABP22, BB04, CSR93, FP20}, outer billiards with contraction \cite{ MGU2015, G20, Jeong2, Jeong1}, polling systems \cite{MMPV06}, best-response dynamics in game theory \cite{GP21}, election methods \cite{JO19} and neural networks \cite{ECPG2013, Pires23}, just to name a few.  PCs also arise in subjects of pure mathematics such as non-trivial recurrence on surfaces \cite{CGut1983,CGBP2009,BS20}, uniform distribution of sequences modulo one \cite{B04}, $b$-ary expansion of real numbers \cite{ABP22,BP2020} and Hecke-Mahler series \cite{ BKLN21,LN18, MN21,GLN24}.

While a contraction acting on a compact or complete metric space always has a unique globally attractive fixed-point, a piecewise contraction with discontinuities may  have lots of locally attractive limit cycles and Cantor attractors coexisting side by side.

Thanks to a series of recent works, the dynamics of one-dimensional PCs (i.e., PCs of the interval with finitely many discontinuities) is by now well-understood. For instance, it is known that the attractor of locally injective PCs of the interval is the union of a finite number of periodic orbits with a finite number of Cantor sets \cite{CCG21}. However, the existence of Cantor attractors is rather exceptional. Indeed, in the measure-theoretical sense, a generic PC of the interval is asymptotically periodic, that is, the global attractor is formed by a finite number of periodic orbits \cite{NPR2014, NPR18, GN22}. Moreover, in some cases, the set of exceptional parameters where Cantor attractors exist has zero Hausdorff dimension \cite{gaivao2022hausdorff}. The following facts are also known: the subword complexity of  symbolic orbits of PCs of the interval is either uniformly bounded or a linear function \cite{CGM20, Pires19};  PCs of the interval with no periodic orbits and a Cantor attractor can be constructed  as suspension of any minimal interval exchange transformation \cite{Pires19}.  

A paradigmatic example of one-dimensional PC is the contracted rotation on $[0,1)$: $f_{\lambda,b}(x)=\lambda x + b \pmod{1}$ with  $0<\lambda<1$ and $1-\lambda<b<1$. Lifting $f_{\lambda,b}$ to $\Rr$, one can define a rotation number $\rho(f_{\lambda,b})\in[0,1)$ that describes the asymptotic average rotation of any orbit of $f_{\lambda,b}$. It is known that $f_{\lambda,b}$ has a Cantor attractor if and only if $\rho(f_{\lambda,b})$ is an irrational number. When $\rho(f_{\lambda,b})$ takes a rational value $\frac{p}{q}$ with co-prime positive integers $p$ and $q$, the contracted rotation $f_{\lambda,b}$ has a single global attracting periodic orbit with period $q$. Since the rotation number varies continuously and non-decreasingly with the parameter $b$, every rotation number in the interval $(0,1)$ is realizable by some $b\in(1-\lambda,1)$. Thus, the set of parameters $(\lambda,b)$, where $f_{\lambda,b}$ has Cantor attractors is uncountable, has Hausdorff dimension $\geq1$, but has zero Lebesgue measure \cite{B93,LN18}.  

In contrast to one-dimensional PCs,  less is known about the dynamics of multi-dimensional PCs.  In \cite{CGMU16},  it is proved that multi-dimensional PCs are asymptotically periodic whenever the attractor and the set of discontinuities are disjoint.  However, such sufficient condition turns out to be rather difficult to be verified in parametrized families of PCs, which are relevant for many applications.  In \cite{BD08}, it is proved that asymptotic periodicity is a generic property (in the measure-theoretical sense) of a family of piecewise-affine conformal contractions defined in $\mathbb{R}^2$  and sharing the same domains of continuity.  The family is obtained by vertical translation of the graph of any PC in the family.
Very recently,  it was proved in \cite{jain2024piecewise} that for a generic partition,  in a Baire residual sense,  the attractor of locally injective PCs on compact subsets of $\mathbb{R}^d$ is disjoint from the set of discontinuities,  implying that the attractor consists of a finite number of periodic orbits.  The authors also prove that PCs are generically topologically stable.  However, the results of \cite{jain2024piecewise} cannot be applied to parametrized families of multi-dimensional PCs. 

In this article,  we study genericity, from a measure-theoretical point of view, of asymptotic periodicity in parametrized families of multi-dimensional PCs. More precisely,  we provide a criteria (Theorem~\ref{th:main}) to show that an arbitrary family $\{f_{\mu}\}_{\mu\in U}$ of locally bi-Lipschitz piecewise contractions $f_\mu:X\to X$ defined on a compact metric space $X$ is asymptotically periodic for Lebesgue almost every parameter $\mu$ running over an open subset $U$ of the $M$-dimensional Euclidean space $\mathbb{R}^M$. Our criteria consists of two conditions: Hypothesis (E) in Definition \ref{vse} and Hypothesis (T) in Definition \ref{transversality}. We also provide Proposition \ref{lem26}, that makes it easier to verify Hypothesis (E). 

To show the strength and powerfulness of Theorem~\ref{th:main}, we give applications of it  to prove the almost sure asymptotic periodicity of two families of piecewise contractions $\{f_\mu\}_{\mu\in U}$ with varying domains of continuity. More precisely, Theorem~\ref{th:NPR2} shows that asymptotic periodicity is a generic property in families of one-dimensional piecewise contractions while Theorem~\ref{thm:multidim} and its corollary are multi-dimensional versions of the same result. Theorem~\ref{th:NPR2}  appeared previously in \cite[Theorem 1.4]{NPR18} but the proof presented therein does not generalize to higher dimensions. Such a generalization is one of the main contributions of this article.

The rest of the article is organized as follows. In Section \ref{general PCs}, we consider piecewise contractions defined on arbitrary metric spaces. First, we give a condition, Lemma \ref{lem:suff}, for a piecewise contraction to be asymptotically periodic. In the sequel, within the same section, we apply Lemma \ref{lem:suff} to a family of piecewise contractions, yielding a proof of  Theorem \ref{th:main}. Proposition \ref{lem26}, already mentioned in this introduction, is also stated and  proved in Section \ref{general PCs}. The results about one-dimensional PCs, including the proof of Theorem \ref{th:NPR2}, are given in Section \ref{onedimensionalpcs}. The proof of  Theorem \ref{thm:multidim} is given in  Section \ref{sec:proof multidim},  while its corollary is proved in Section \ref{proofs corollaries}.

\section{Main results}

To give the statement of our main results, we  first introduce two hypotheses: Hypothesis (E) and Hypothesis (T).  In order to do so, we need the formal definitions of piecewise contraction, regular point, asymptotic periodic self-map, itineraries of a piecewise contraction and family of piecewise contractions. 

Throughout this section, unless otherwise specified,
  $(X,d)$ will denote a compact metric space whose open balls are connected and  $\lambda \in (0,1)$.

\begin{defn}[Piecewise contraction]\label{def1}
A map $f\colon X\to X$ is called a \textit{piecewise $\lambda$-contraction on $X$} if there exist $N\in\Nn$, open, connected and pairwise disjoint subsets $A_1,\ldots, A_N$ of $X$ and bi-Lipschitz maps $\varphi_1,\ldots,\varphi_N\colon X\to X$ such that 
\begin{enumerate}
\item [(i)] $X':=\bigcup_{i=1}^N A_i$ is dense in $X$;
\item [(ii)] $f|_{A_i}=\varphi_i|_{A_i}$ for every $i\in\{1,\ldots,N\}$;
\item [(iii)] $\text{Lip}\,(\varphi_i)\leq \lambda$ for every $i\in\{1,\ldots,N\}$, where $\text{Lip}\,(\varphi_i)$ denotes the Lipschitz constant of $\varphi_i$.
\end{enumerate}

The set $\mathcal{A}=\{1,\ldots,N\}$ is called the \textit{label set} of $f$. The collection of sets $\{A_i\}_{i\in\mathcal{A}}$ is called the \textit{partition} of $f$ and the collection of maps $\{\varphi_i\}_{i\in\mathcal{A}}$ is called the \textit{iterated function system associated to $f$}. The \textit{singular set} of $f$ is 
$$S(f):=X{\setminus} X'.$$

\end{defn}

\begin{defn}[Regular point]\label{regularp}
Given a piecewise $\lambda$-contraction $f:X\to X$ and $n\in\Nn$,  we say that $x\in X$ is a \textit{regular point of order $n$} if $f^j(x)\notin S(f)$ for all $0\le j<n$. We say that $x\in X$ is a \textit{regular point} if $x$ is a regular point of order $n$ for all $n\in\Nn$. A periodic orbit is called  \textit{regular} if one (and therefore all) of its periodic points is regular.  We denote by $Z(f)$ the set of regular points of $f$.
\end{defn}

Given a self-map $f\colon X\to X$, we denote by
$$ \omega(f,x) := \bigcap_{m\ge 1} \overline{\bigcup_{n\ge m} {\left\{f^n(x)\right\}}}
$$
the $\omega$-limit set of a point $x\in X$ under $f$. 

Given a forward invariant set $Y\subset X$, the union of $\omega(f,x)$ over all points $x\in Y$ is denoted by $\omega(f,Y)$, i.e.,
$$ 
\omega(f,Y) := \bigcup_{x\in Y}  \omega(f,x).
$$
When $Y=X$, we simply write $\omega(f):=\omega(f,X)$.

\begin{defn}[Asymptotically periodic]\label{def asymptotically periodic}
Given a self-map $f\colon X\to X$ and a forward invariant set $Y$, we say that $f$ is \textit{asymptotically periodic on $Y$} if $\omega(f,Y)$ is the union of finitely many periodic orbits.  Moreover, $f$ is called \textit{asymptotically periodic} if $Y=X$.
\end{defn}

\begin{rem}\label{rem1} If $\omega(f,Y)$ is finite and $\omega(f,Y)\cap S(f)=\emptyset$, then it must be a union of finitely many periodic orbits because, in such a case,  
$f$ takes  $\omega(f,Y)$ onto itself. Moreover, $f$ is asymptotically periodic on a closed forward invariant set $Y\subset X$ if and only if $f\vert_{Y}:Y\to Y$ is asymptotically periodic. Finally,   if $f$ is asymptotically periodic on $Y$ and $X=\bigcup_{n\geq0}f^{-n}(Y)$, then $f$ is asymptotically periodic. 
\end{rem}


\begin{defn}[Itineraries]\label{def itineraries}  Let $f:X\to X$ be a piecewise $\lambda$-contraction with label set $\mathcal{A}= \{1,\ldots, N\}$ and partition $\{A_i\}_{i\in\mathcal{A}}$. Given any regular point $x\in Z(f)$, we call \textit{itinerary of $x$} the sequence $i(x)=(i_0,i_1,\ldots)\in\mathcal{A}^{\Nn_0}$, $\Nn_0 =\{0\}\cup \Nn$, uniquely defined by the condition $f^n(x)\in A_{i_n}$ for every $n\geq0$. Likewise,  if $x$ is a regular point of order $n$ for some $n\geq1$, then we call  the $n$-tuple $(i_0,i_1,\ldots,i_{n-1})\in\mathcal{A}^{n}$ the \textit{itinerary of order $n$ of $x$} if $f^j(x)\in A_{i_j}$ for all $0\le j<n$. The set of all itineraries of order $n$ is denoted by $\I_n(f)$.
\end{defn}

Given $M\ge 1$, let $U\subset\mathbb{R}^M$ be a set of positive Lebesgue measure and $\mu^*\in U$.

\begin{defn}[Family of piecewise $\lambda$-contractions]\label{def2}
We say that $\{f_\mu\}_{\mu\in U}$ is a \textit{family of piecewise $\lambda$-contractions} on $X$ if there exists $N\in\Nn$ such that the following conditions are satisfied for each $\mu\in U$:
\begin{enumerate}
\item [(C1)] $f_\mu$ is a piecewise $\lambda$-contraction on $X$;
\item [(C2)] the label set of $f_\mu$ is equal to $\mathcal{A}=\{1,\ldots,N\}$.
\end{enumerate}
\end{defn}

Let $\{f_\mu\}_{\mu\in U}$ be a family of piecewise $\lambda$-contractions on $X$. We denote by $S_\mu:=S(f_\mu)$ the singular set of $f_\mu$ and by $Z_\mu=Z(f_\mu)$ the set of regular points of $f_\mu$.

Given $\delta>0$, set 
\begin{equation}\label{J}
U_{\delta}(\mu^*) :=U\cap B_\delta(\mu^*), \quad \mathcal{J}_n^\delta(\mu^*) :=\bigcup_{\mu\in U_{\delta}(\mu^*)} \I_n(f_\mu),
\end{equation} 
where $B_\delta(\mu^*)$ denotes the open ball of radius $\delta>0$ centred at $\mu^*\in\Rr^M$ and $\I_n(f_\mu)$ is the set of all itineraries of order $n$ of $f_\mu$ as in Definition~\ref{def itineraries}.
By Condition (C2), $\mathcal{J}_n^\delta(\mu^*)$ is a finite set whose cardinality satisfies $\#\mathcal{J}_n^\delta(\mu^*)\le N^n$. 
\bigskip

\begin{defn}\label{vse}
We say  $\{f_\mu\}_{\mu\in U}$  satisfies \textit{Hypothesis (E)} at $\mu^*\in U$ if
\begin{equation}\tag{E}
\lim_{\delta\to0^+}\limsup_{n\to\infty}\frac1n\log\#\mathcal{J}_n^\delta(\mu^*) =0.
\end{equation}
\end{defn}

In what follows, we denote by $\{\varphi_{i,\mu}\}_{i\in\mathcal{A}}$ the iterated function system associated to the piecewise $\lambda$-contraction $f_{\mu}$, as in Definition \ref{def1}.  Given an $n$-tuple $\alpha=(i_0,i_1,\ldots,i_{n-1})\in\mathcal{A}^n$, we define the map
\begin{equation}\label{def varphi}
\varphi_\mu^{\alpha}:=\varphi_{i_{n-1},\mu}\circ\varphi_{i_{n-2},\mu}\circ\cdots\circ\varphi_{i_{1},\mu}\circ \varphi_{i_{0},\mu}.
\end{equation}

Denote by $\text{Leb}^*$ the Lebesgue outer measure.
\begin{defn}\label{transversality} We say that $\{f_\mu\}_{\mu\in U}$  satisfies \textit{Hypothesis (T)} at $\mu^*\in U$
 if there exist $x_0\in X$, $\varepsilon_0>0$, $\delta_0>0$, $n_0\geq 1$, $a>0$ and $c>0$ such that for every $0<\varepsilon<\varepsilon_0$, $n\geq n_0$ and $\alpha\in \mathcal{J}_n^{\delta_0}(\mu^*)$,
\begin{equation}\tag{T} 
\text{Leb}^*\left(\left\{\mu\in U_{\delta_0}(\mu^*)\colon d(\varphi_\mu^{\alpha}(x_0),S_{\mu}) \leq \varepsilon   \right\}\right) \leq c\, \varepsilon^a.
\end{equation}
\end{defn}

We are now ready to state our first result.

\begin{thm}\label{th:main} Let $(X,d)$ be a compact metric space whose open balls are connected and  let $\lambda \in (0,1)$.
If $U\subset\mathbb{R}^M$ is a set of positive Lebesgue measure and
  $\{f_\mu\}_{\mu\in U}$ is a family of piecewise $\lambda$-contractions on X satisfying the hypotheses (E) and (T) at $\mu^*\in U$, then there is $\delta>0$ such that $f_\mu$ is  asymptotically periodic on $Z_\mu=Z(f_{\mu})$ for Lebesgue almost every $\mu\in U_\delta(\mu^*)$.
\end{thm}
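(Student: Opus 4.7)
The plan is to combine a Borel--Cantelli argument over itineraries with the single-map sufficient condition for asymptotic periodicity supplied by Lemma~\ref{lem:suff}. I read that lemma as follows: if $f\colon X\to X$ is a piecewise $\lambda$-contraction and there exist $x_0\in X$ and $\theta\in(\lambda,1)$ with $d(\varphi^\alpha(x_0),S(f))>\theta^n$ for every $\alpha\in\I_n(f)$ and every sufficiently large $n$, then $f$ is asymptotically periodic on $Z(f)$. The heuristic is that the $\lambda^n$-Lipschitz image $\varphi^\alpha(X)$ sits inside a ball of radius $\lambda^n\operatorname{diam}(X)$ about $\varphi^\alpha(x_0)$, which in turn lies at distance $>\theta^n-\lambda^n\operatorname{diam}(X)>0$ from $S(f)$; hence each $n$-cylinder is eventually trapped in a single continuity domain, the symbolic branching stops, and only finitely many periodic itineraries survive.

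Granting this, I first choose scales. Let $(x_0,\varepsilon_0,\delta_0,n_0,a,c)$ be as in Hypothesis~(T), fix $\theta\in(\lambda,1)$, and fix $\eta\in(0,a\log(1/\theta))$. Hypothesis~(E) allows me to shrink $\delta_0$ to some $\delta\in(0,\delta_0]$ so that $\#\mathcal{J}_n^\delta(\mu^*)\leq e^{\eta n}$ for all large $n$; the inclusion $\mathcal{J}_n^\delta(\mu^*)\subseteq\mathcal{J}_n^{\delta_0}(\mu^*)$ keeps the constants in~(T) available. Setting $\varepsilon_n:=\theta^n$ and
\[
E_n:=\bigcup_{\alpha\in\mathcal{J}_n^\delta(\mu^*)}\bigl\{\mu\in U_\delta(\mu^*):d(\varphi_\mu^\alpha(x_0),S_\mu)\leq\varepsilon_n\bigr\},
\]
Hypothesis~(T) and the union bound give, once $\varepsilon_n<\varepsilon_0$ and $n\geq n_0$,
\[
\text{Leb}^*(E_n)\leq c\,\varepsilon_n^a\cdot\#\mathcal{J}_n^\delta(\mu^*)\leq c\,(\theta^a e^\eta)^n,
\]
which is summable because $\theta^a e^\eta<1$ by the choice of $\eta$.

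Borel--Cantelli then yields a full-measure subset $U^*\subseteq U_\delta(\mu^*)$ such that each $\mu\in U^*$ lies in only finitely many $E_n$; equivalently, there is $n_1(\mu)\geq n_0$ with $d(\varphi_\mu^\alpha(x_0),S_\mu)>\theta^n$ for all $n\geq n_1(\mu)$ and all $\alpha\in\mathcal{J}_n^\delta(\mu^*)$. Since $\mu\in U_\delta(\mu^*)$ gives $\I_n(f_\mu)\subseteq\mathcal{J}_n^\delta(\mu^*)$, the bound holds for every itinerary of $f_\mu$, and Lemma~\ref{lem:suff} delivers the asymptotic periodicity of $f_\mu$ on $Z_\mu$. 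The principal obstacle I anticipate is Lemma~\ref{lem:suff} itself: converting a pointwise escape rate from $S_\mu$ for a single base point $x_0$ into a finite union of periodic orbits in $\omega(f_\mu,Z_\mu)$ is where the hypotheses that $X$ is compact with connected open balls and that the branches $\varphi_{i,\mu}$ are bi-Lipschitz are presumably used; the Borel--Cantelli portion above is comparatively routine once the correct scale $\varepsilon_n=\theta^n$ with $\theta\in(\lambda,1)$ is identified.
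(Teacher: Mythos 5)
Your argument is correct and follows essentially the same route as the paper: a union bound over the itineraries in $\mathcal{J}_n^{\delta}(\mu^*)$, with Hypothesis (T) controlling each summand, Hypothesis (E) controlling the number of summands, and the single-map criterion of Lemma~\ref{lem:suff} converting ``$\varphi_\mu^{\alpha}(x_0)$ stays away from $S_\mu$'' into asymptotic periodicity on $Z_\mu$. Two minor discrepancies are worth recording. First, the paper's Lemma~\ref{lem:suff} literally says: if $\Omega(f)\cap S(f)=\emptyset$, where $\Omega(f)$ is the accumulation set of the points $\varphi^{\alpha}(x_0)$, then $f$ is asymptotically periodic on $Z(f)$. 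Your quantitative reading (lower bound $\theta^n$ with $\theta\in(\lambda,1)$) is not that statement, but it does imply its hypothesis via Lemma~\ref{lem:covering}: every point of $\Omega(f)$ lies within $2\,\mathrm{diam}(X)\lambda^n$ of some $\varphi^{\alpha}(x_0)$ with $\alpha\in\mathcal{I}_n(f)$, hence at distance at least $\theta^n-2\,\mathrm{diam}(X)\lambda^n>0$ from $S(f)$ for large $n$ — which is exactly the bridge your heuristic sketches, so your use of the lemma is legitimate. Second, you run Borel--Cantelli at the scale $\varepsilon_n=\theta^n$, which needs summability of $c(\theta^a e^{\eta})^n$; the paper instead takes $\varepsilon_n=2\,\mathrm{diam}(X)\lambda^n$ and observes that the exceptional set $V\cap U_{\delta}(\mu^*)$ is contained in your $E_n$ for \emph{every} $n$ (again by Lemma~\ref{lem:covering}), so its outer measure is at most $c\,\varepsilon_n^a\,\#\mathcal{J}_n^{\delta}(\mu^*)\to 0$ with no Borel--Cantelli required. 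Both work; just note that your Borel--Cantelli step is applied to the outer measure $\mathrm{Leb}^*$, which is harmless since only countable subadditivity is used.
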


The proof of Theorem~\ref{th:main} is given in Subsection~\ref{proof:main}.  Let us briefly explain the strategy of the proof.  Hypothesis (E) guarantees that the number of distinct itineraries of $f_{\mu^*}$ grows sub-exponentially with $n$ and that this property is robust. Since,  for any $\mu$ that is  $\delta$-close to $\mu^*$, the $\omega$-limit set $\omega(f_{\mu},Z_\mu)$ can be coded using the itineraries in $\mathcal{J}_n^\delta(\mu^*)$, which grow sub-exponentially with $n$, the contraction together with Hypothesis (T) ensures that the measure of parameters where $\omega(f_{\mu},Z_\mu)$ gets too close to the singular set of $f_\mu$ is arbitrarily small. Hence, for almost every parameter $\mu$ sufficiently close to $\mu^*$, the set $\omega(f_{\mu},Z_\mu)$ is disjoint from the singular set, which implies that it is formed by a finite number of periodic points.  

In concrete families,  it is much more difficult to verify Hypothesis (E) than Hypothesis (T).  For this reason,  in Proposition~\ref{lem26}, we provide a sufficient condition that guarantees Hypothesis (E) and that it is easier to check.

Next, we will use Theorem~\ref{th:main} to deduce the asymptotic periodicity of generic multi-dimensional piecewise-affine contractions.  Let $d\ge 1$ be an integer. In what follows, we will now suppose that $X\subset\mathbb{R}^d$ is a \textit{compact convex polytope}, i.e., $X$ is a compact set that is the intersection of finitely many closed half-spaces whose bounding hyperplanes is the family 
\begin{equation}\label{fdhyp}
\mathcal{F}=\{F_1,\ldots, F_{r}\}, \quad\textrm{where}\quad
F_j = \{x\in\mathbb{R}^d: \langle u^{(j)},x\rangle = c_j\},
\end{equation}
where $u^{(j)}$ is a unit vector in $\mathbb{R}^d$,  $c_j\in\mathbb{R}$, and  $\langle\cdot,\cdot\rangle$ denotes the Euclidean inner product in $\mathbb{R}^d$.  In matrix notation, we represent the polytope as 
\begin{equation}\label{defX}
X=\{x\in\Rr^d\colon Mx\leq c\},
\end{equation} where $M$ is the $r\times d$ matrix whose $i$-th row,  $1\le i\le r$, is formed by the entries of the unit vector $u^{(i)}$  and $c=(c_1,\ldots, c_r)^t$. The polytope $X$  need  not be full-dimensional, i.e., it could be contained in a proper affine subspace of $\Rr^d$.
In low dimensions, we can think of $X$ as a convex polygon in $\mathbb{R}^2$, a 2-simplex in $\Rr^3$ or a compact convex polyhedron in $\mathbb{R}^3$. In the sequel, a subset $A$ of $X$ is open in $X$ in the subspace topology.

\begin{defn}[Piecewise-affine contraction]
We say that $f\colon X\to X$ is a \textit{piecewise-affine contraction} on $X$ if there are finitely many disjoint open polytopes $A_1,\ldots, A_N$ contained in $X$ with $N\in\Nn$ such that 
\begin{enumerate}
\item $X':=\bigcup_{i=1}^N A_i$ is dense in $X$;
\item there is some norm $\|\cdot\|$ in $\mathbb{R}^d$ such that for every $i\in\{1,\ldots,N\}$,  the map $f|_{A_i}$ can be extended to an injective affine contraction on $\mathbb{R}^d$.
\end{enumerate}
\end{defn}

Clearly,  a piecewise-affine contraction $f$  is a piecewise $\lambda$-contraction (Definition~\ref{def1}) where $\lambda=\max_i\|Df|_{A_i}\|$.  

Now, we introduce a family of piecewise-affine contractions on $X$.  Let $\ell\ge 1$, $\mathcal{A}=\{-1,1\}^\ell$ and $\varphi_i: \mathbb{R}^d\to \mathbb{R}^d$, $i\in \mathcal{A}$,  be injective affine contractions that take $X$ into its relative interior, i.e.,  let
$\Lambda_i\in\textrm{GL}_d$ with $\Vert \Lambda_i\Vert < 1$\footnote{$\Vert \Lambda_i\Vert = \sup\, \{\Vert \Lambda_i x\Vert: \Vert x \Vert=1 \}$ denotes the operator norm induced by some norm $\Vert \cdot\Vert$ in $\mathbb{R}^d$.}  and $b_i\in \mathbb{R}^d$, $i\in\mathcal{A}$, be such that
\begin{align}
\varphi_i(x) &= \Lambda_i x + b_i, \quad \forall x\in \mathbb{R}^d,\quad \forall i\in\mathcal{A}, \label{gld}\\
\varphi_i(X) &\subset \mathrm{relint}\,(X),\quad \forall i\in\mathcal{A}, \label{smap}
\end{align}
where $\textrm{relint}\,(X)=\{x\in \Rr^d\colon \exists\,\epsilon>0,\, B_\epsilon(x)\cap \text{aff}(X)\subset X\}$, $B_\epsilon(x)$ denotes the open ball centred at $x$ of radius $\epsilon$ and $\text{aff}(X)$ is the affine hull of $X$.  We will use the IFS $\Phi=\{\varphi_i\}_{i\in\mathcal{A}}$ to define piecewise-affine contractions whose singular set is determined by the  boundary of $X$ and
$\ell$ hyperplanes in $\mathbb{R}^d$.  More precisely,  let
\begin{equation}\label{Hj}
H_j\left(\mu\right) := \left\{  {x}\in \mathbb{R}^d:\langle  {v}^{(j)}, {x}\rangle = \mu_j\right\},\quad  1\le j\le \ell,
\end{equation}
where $\mu=(\mu_1,\ldots,\mu_{\ell})\in \mathbb{R}^\ell$,  $v^{(1)},\ldots, v^{(\ell)}\in \mathbb{R}^d$ are unit vectors in $\Rr^d$.
Let
\begin{equation}\label{Hj2}
\mathscr{H}_\mu :=\{H_j(\mu)\colon 1\leq j\leq \ell\},\quad \mu\in \Rr^\ell.
\end{equation}

\begin{defn}[Label map] \label{def:label map}  Given $\mu\in\mathbb{R}^\ell$,
 the \textit{label map} with respect to the family $\mathscr{H}_\mu$ of hyperplanes is the map $\sigma_\mu\colon \Rr^d\to \mathcal{A}$ defined by $$\sigma_\mu(x)=\big(s_1(x),\ldots,s_\ell(x)\big),$$ 
where\footnote{The choices $s_j(x)=-1$ when $\langle {v}^{(j)}, {x}\rangle = \mu_{j}$ are arbitrary and not relevant for the proof of the main result in this section.}
$$ s_j(x) = \begin{cases} -1 & \textrm{if} \quad \added{\langle {v}^{(j)}, {x}\rangle} \le \mu_j,\\
\phantom{-}1 & \textrm{if} \quad \langle {v}^{(j)}, {x}\rangle > \mu_{j}
\end{cases},\quad 1\le j\le \ell.
$$  
\end{defn}
Finally, given $\mu\in\mathbb{R}^\ell$, let $f_\mu: X\to X$ be defined by
\begin{equation}\label{fmu40}
f_\mu(x)=\varphi_{\sigma_\mu(x)}(x), \quad \forall\, x\in X.
\end{equation}
Clearly,  for each $\mu\in \mathbb{R}^\ell$,  the map $f_\mu$ is a piecewise-affine contraction.  
We are now ready to state our second main result.

\begin{thm}\label{thm:multidim}Let $f_\mu$ be the piecewise affine-contraction defined in \eqref{fmu40}.  The map $f_\mu$ is asymptotically periodic on $Z_\mu$ for Lebesgue almost every $\mu\in\Rr^\ell$. 
\end{thm}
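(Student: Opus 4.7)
The plan is to apply Theorem~\ref{th:main} with $U=\mathbb{R}^\ell$ at an arbitrary reference parameter $\mu^*\in\mathbb{R}^\ell$. Since the conclusion of Theorem~\ref{th:main} is local---it yields almost-sure asymptotic periodicity only on some ball $U_\delta(\mu^*)$---I would afterwards cover $\mathbb{R}^\ell$ by countably many such balls (the space is Lindel\"of), so that the complement of the asymptotically periodic parameter set is a countable union of Lebesgue null sets. It thus suffices to verify Hypotheses (E) and (T) at every $\mu^*\in\mathbb{R}^\ell$.

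For Hypothesis~(T), the crucial simplification is that the IFS $\{\varphi_i\}_{i\in\mathcal{A}}$ defined in \eqref{gld} does \emph{not} depend on $\mu$, so $\varphi_\mu^\alpha=\varphi^\alpha$ for every itinerary $\alpha$. I would fix any $x_0\in X$; for $|\alpha|=n\geq 1$, condition \eqref{smap} gives $\varphi^\alpha(x_0)\in\varphi_{i_{n-1}}(X)\subset \operatorname{relint}(X)$, whence
\[
d(\varphi^\alpha(x_0),\partial X)\;\geq\;\eta_0 := \min_{i\in\mathcal{A}} d(\varphi_i(X),\partial X)\;>\;0.
\]
The singular set satisfies $S_\mu\subseteq \partial X\cup\bigcup_{j=1}^\ell (H_j(\mu)\cap X)$, and for each hyperplane,
\[
d(\varphi^\alpha(x_0),H_j(\mu))=\bigl|\langle v^{(j)},\varphi^\alpha(x_0)\rangle-\mu_j\bigr|
\]
is a $1$-Lipschitz affine function of the single coordinate $\mu_j$. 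Hence $\{\mu\in U_{\delta_0}(\mu^*)\colon d(\varphi^\alpha(x_0),H_j(\mu))\leq \varepsilon\}$ has Lebesgue measure at most $(2\varepsilon)(2\delta_0)^{\ell-1}$. Summing over $j=1,\ldots,\ell$ and restricting to $\varepsilon<\eta_0$ yields Hypothesis~(T) with exponent $a=1$.

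Hypothesis~(E) is the main obstacle and is precisely where Proposition~\ref{lem26} enters. The underlying geometric reason (E) should hold is that the contraction rate $\lambda<1$ shrinks cylinders $\varphi^\alpha(X)$ at rate $\lambda^n$: two length-$n$ itineraries agreeing on their last $k$ symbols produce images within $\lambda^k\operatorname{diam}(X)$ of each other. Therefore the branching of the ``itinerary tree'' at depth $n$ is governed by how many length-$n$ cylinders straddle the fixed finite hyperplane arrangement $\mathscr{H}_\mu$, and since the attractor $K=\bigcap_n\bigcup_{|\alpha|=n}\varphi^\alpha(X)$ is compact and covered at scale $\lambda^n$ by a sub-exponentially growing number of pieces, its packing number with respect to the hyperplanes also grows sub-exponentially. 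This is the geometric content packaged by Proposition~\ref{lem26}, which I would invoke to extract Hypothesis~(E) at $\mu^*$.

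Combining the two hypotheses, Theorem~\ref{th:main} supplies $\delta(\mu^*)>0$ such that $f_\mu$ is asymptotically periodic on $Z_\mu$ for almost every $\mu\in U_{\delta(\mu^*)}(\mu^*)$, and a countable subcover of $\{U_{\delta(\mu^*)}(\mu^*)\colon \mu^*\in\mathbb{R}^\ell\}$ completes the proof. The subtle point throughout is the uniformity built into $\mathcal{J}_n^\delta(\mu^*)$: the hyperplane perturbation $\mu\mapsto H_j(\mu)$ must not introduce exponentially many extra itineraries as $\mu$ ranges over $U_\delta(\mu^*)$, and this is exactly the uniformity that Proposition~\ref{lem26} is engineered to deliver.
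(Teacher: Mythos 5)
Your verification of Hypothesis (T) is correct and essentially identical to the paper's: since the IFS does not depend on $\mu$, the distance from $\varphi^{\alpha}(x_0)$ to each hyperplane $H_j(\mu)$ is an affine function of the single coordinate $\mu_j$, and the measure estimate with $a=1$ follows. The covering-by-countably-many-balls step at the end is also fine in principle (the paper uses a density-point argument to the same effect).

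The genuine gap is in Hypothesis (E). You propose to ``verify Hypotheses (E) and (T) at \emph{every} $\mu^*\in\mathbb{R}^\ell$'' and to ``invoke Proposition~\ref{lem26} to extract Hypothesis (E) at $\mu^*$,'' but Proposition~\ref{lem26} has two hypotheses that must themselves be established --- stability of $\{\mathscr{C}^{(n)}_\mu\}$ at $\mu^*$ for every $n$, and $H_{\mathrm{mult}}(f_{\mu^*})=0$ --- and neither holds at every $\mu^*$; verifying them at almost every $\mu^*$ is the bulk of the proof. Indeed, the paper points out that piecewise-affine contractions can have \emph{positive} multiplicity entropy (the Kruglikov--Rypdal example), so (E) genuinely fails for some parameters and your blanket claim is false as stated. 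Your heuristic for (E) is also circular: the ``sub-exponentially growing number of pieces'' covering the attractor at scale $\lambda^n$ is precisely $\#\mathcal{I}_n(f_\mu)$, the quantity to be bounded. What the paper actually does is (i) show that the parameter sets $Z_\alpha=\{\mu: A_\mu^\alpha\neq\emptyset\}$ are convex, so that outside the null set $\bigcup_\alpha\partial Z_\alpha$ the itinerary sets are locally constant in $\mu$ and the cylinders vary Hausdorff-continuously (stability), and (ii) show by a linear-algebra argument that, outside a further countable union of hyperplanes $\Gamma_{j,\beta}(\boldsymbol{\alpha})$ in parameter space, no point of $\mathbb{R}^d$ can lie on more than a bounded number of the pulled-back hyperplanes $(\varphi^{\alpha})^{-1}(H_j(\mu))$, which via the central-hyperplane-arrangement bound gives $\mathrm{mult}(\mathscr{C}^{(n)}_\mu)$ bounded and hence $H_{\mathrm{mult}}(f_\mu)=0$. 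Both exclusions are unavoidable, and your proposal contains no substitute for either.
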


We prove Theorem~\ref{thm:multidim} in Section~\ref{sec:proof multidim}. 
From this result, we deduce the following corollary whose proof can be found in Section~\ref{proofs corollaries}. 

%

\begin{cor}\label{cor:3}
Suppose that, for each $i\in\mathcal{A}$, the affine contraction $\varphi_i$ in \eqref{gld} is a homothety of $\Rr^d$. For Lebesgue almost every $\mu\in \Rr^\ell$,  the map $f_\mu$ is asymptotically periodic and its periodic points are regular.
\end{cor}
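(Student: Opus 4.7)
The plan is to deduce Corollary~\ref{cor:3} from Theorem~\ref{thm:multidim} in two stages.

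\emph{Stage 1 (regularity of periodic points).} Because each $\varphi_i$ in \eqref{gld} is independent of $\mu$, any candidate periodic orbit of $f_\mu$ with itinerary $\alpha=(a_0,\ldots,a_{p-1})\in\mathcal{A}^p$ is the $\mu$-independent collection
\[
q_\alpha^{(k)} := \varphi_{a_{k-1}}\circ\cdots\circ\varphi_{a_0}(p_\alpha),\qquad 0\leq k<p,
\]
where $p_\alpha$ is the unique fixed point of the homothety $\varphi_\alpha = \varphi_{a_{p-1}}\circ\cdots\circ\varphi_{a_0}$. By \eqref{smap} each $q_\alpha^{(k)}$ lies in $\mathrm{relint}(X)$, so the only way this orbit can meet $S_\mu$ is for some $q_\alpha^{(k)}$ to lie on a hyperplane $H_j(\mu)\in\mathscr{H}_\mu$, which amounts to the codimension-one condition $\mu_j = \langle v^{(j)},q_\alpha^{(k)}\rangle$. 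The countable union over $(\alpha,k,j)$ of these hyperplanes in $\mathbb{R}^\ell$ is Lebesgue null, so outside this null set every periodic point of $f_\mu$ is regular.

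\emph{Stage 2 (asymptotic periodicity on $X$).} By Theorem~\ref{thm:multidim} and Stage~1, for a.e.\ $\mu$ the set $O(\mu):=\omega(f_\mu,Z_\mu)$ is a finite union of regular periodic orbits with $O(\mu)\cap S_\mu = \emptyset$. I would choose $\varepsilon>0$ such that the open $\varepsilon$-neighborhood $Y := B_\varepsilon(O(\mu))$ is disjoint from $S_\mu$ and, for every $q\in O(\mu)$, $B_\varepsilon(q)\subset A_{\sigma_\mu(q)}$. On each such ball $f_\mu$ coincides with the homothety $\varphi_{\sigma_\mu(q)}$, giving $f_\mu(Y)\subset B_{\lambda\varepsilon}(O(\mu))\subsetneq Y$: the set $Y$ is forward-invariant, contained in $Z_\mu$, and attracts every orbit starting in it to $O(\mu)$. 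By Remark~\ref{rem1}, it therefore suffices to prove $X = \bigcup_{n\geq 0}f_\mu^{-n}(Y)$, i.e., that every orbit of $f_\mu$ eventually enters $Y$.

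The homothety structure yields the uniform Lipschitz bound
\[
d\bigl(\varphi_\mu^\alpha(x),\varphi_\mu^\alpha(x_0)\bigr) \leq \lambda^n\,\mathrm{diam}(X),\qquad x\in X,\ \alpha\in\mathcal{A}^n,
\]
where $x_0$ is the reference point from Hypothesis~(T). The proof of Theorem~\ref{thm:multidim} provides, for a.e.\ $\mu$, constants $\varepsilon_0>0$ and $n_0\in\mathbb{N}$ such that $d(\varphi_\mu^\alpha(x_0),S_\mu)>\varepsilon_0$ for all $\alpha\in\mathcal{J}_n^{\delta_0}(\mu)$ and $n\geq n_0$. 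Writing $f_\mu^n(x) = \varphi_\mu^{\alpha(x,n)}(x)$ with $\alpha(x,n)=(\sigma_\mu(f_\mu^j(x)))_{j<n}$ and combining these inequalities yields $d(f_\mu^n(x),S_\mu) \geq \varepsilon_0/2$ for all large $n$, so the orbit of every $x\in X$ eventually enters $Y$, completing the argument via Remark~\ref{rem1}.

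\emph{Main obstacle.} For $x\in X\setminus Z_\mu$ the label sequence $\alpha(x,n)$ is produced through the tie-breaking convention of $\sigma_\mu$ on $S_\mu$ and need not belong a priori to $\mathcal{J}_n^{\delta_0}(\mu)$, so the uniform bound extracted from the proof of Theorem~\ref{thm:multidim} does not apply directly. Closing this gap requires a perturbation argument in $\mu$-space showing that every label sequence realized by some orbit of $f_\mu$ occurs as the itinerary of a regular-of-order-$n$ point of $f_{\mu'}$ for some $\mu'$ in a small ball around $\mu$, which can be arranged by slightly shifting the hyperplanes in $\mathscr{H}_\mu$ so as to open up the relevant cell containing the orbit segment of $x$.
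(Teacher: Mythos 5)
Your Stage~1 is correct: a periodic point of $f_\mu$ whose label sequence over one period is $\alpha$ must be the ($\mu$-independent) fixed point of $\varphi^\alpha$, its orbit lies in $\mathrm{relint}(X)$ by \eqref{smap}, and hence non-regularity forces the hyperplane condition $\mu_j=\langle v^{(j)},q_\alpha^{(k)}\rangle$ for some $(j,k)$; removing this countable union of hyperplanes settles regularity of periodic points. Stage~2, however, is where your argument genuinely diverges from the paper's and where the obstacle you flag is a real gap, not a technicality. First, the proof of Theorem~\ref{th:main} does \emph{not} supply a uniform $\varepsilon_0>0$ with $d(\varphi_\mu^\alpha(x_0),S_\mu)>\varepsilon_0$ for all $\alpha\in\mathcal{J}_n^{\delta_0}(\mu)$: what it yields (via $\Omega_\mu\cap S_\mu=\emptyset$ and compactness) is such a bound only for $\alpha\in\mathcal{I}_n(f_\mu)$, i.e.\ for itineraries realized by regular points of $f_\mu$ itself. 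Extending to all of $\mathcal{J}_n^{\delta_0}(\mu)$ needs an additional Borel--Cantelli argument not present in the paper, and even then one only obtains a bound decaying like $\lambda^n$ (the measure estimate $c\,\varepsilon_0^a\,\#\mathcal{J}_n^{\delta}$ does not tend to zero for fixed $\varepsilon_0$). Second, applying any such bound to the label sequence of a \emph{non-regular} point is exactly the step you leave unproved; the perturbation you sketch (increase each $\mu_j$ by a small $t>0$ so that a finite orbit segment lands strictly inside cells carrying the same labels) can be made rigorous, but as written the proof stops at its decisive point. A further warning sign is that your Stage~2 never uses the homothety hypothesis, whereas the corollary is stated only for homotheties precisely because the paper's argument for asymptotic periodicity on all of $X$ hinges on it.

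The paper's actual route is much shorter and structurally different. It removes the null sets $N_\alpha=\{\mu\colon \exists j,\ \mu_j=\langle v^{(j)},b^\alpha\rangle/(1-\lambda^\alpha)\}$ and argues: if some orbit never enters $Z_\mu$, it meets $S_\mu$ infinitely often, and by \eqref{smap} it can only meet the hyperplanes $H_j(\mu)$; by the pigeonhole principle some $H_j(\mu)$ contains both $y$ and $f_\mu^n(y)=\lambda^\alpha y+b^\alpha$. Since $\Lambda^\alpha=\lambda^\alpha I$ is scalar, $\langle v^{(j)},f_\mu^n(y)\rangle=\lambda^\alpha\mu_j+\langle v^{(j)},b^\alpha\rangle$, and equating this to $\mu_j$ places $\mu$ in $N_\alpha$, a contradiction; then Remark~\ref{rem1} concludes. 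This is exactly where homothety is indispensable: for a general matrix $\Lambda^\alpha$ the quantity $\langle v^{(j)},\varphi^\alpha(y)\rangle$ depends on $y$ through more than $\langle v^{(j)},y\rangle$, so a return to the same hyperplane is not a codimension-one condition on $\mu$ alone. To repair your proposal you would need to carry out the perturbation lemma and the Borel--Cantelli refinement in full; as submitted, the argument for asymptotic periodicity on all of $X$ is incomplete.
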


\section{Piecewise contractions on metric spaces}\label{general PCs}

Let  $(X,d)$ be a compact metric space whose open balls are connected. 
Denote by $\mathrm{diam}(A) = \sup\{d(x,y)\colon x,y\in A\}$ the diameter of a set $A\subset X$. We also suppose that $\mathrm{diam}(X)>0$.
Throughout this section,  $f:X\to X$ is a piecewise $\lambda$-contraction on  $X$ with label set $\mathcal{A}= \{1,\ldots, N\}$,  partition $\{A_i\}_{i\in\mathcal{A}}$ and $\{\varphi_i\}_{i\in\mathcal{A}}$ denotes the iterated function system associated to $f$.

Recall that $Z(f)$ denotes the set of regular points of $f$ (Definition \ref{regularp}). The set of regular points of $f$ equals 
\begin{equation}\label{def Z}
Z(f):= X'\cap f^{-1}(X')\cap f^{-2}(X')\cap \cdots.
\end{equation}
Moreover,  $Z(f) = \bigcap_{n\geq 1} Z_n(f)$, where $Z_n(f)=X'\cap f^{-1}(X')\cap\cdots f^{-n+1}(X')$,  $n\geq1$,  and $Z(f)$  is a forward invariant residual subset of $X$.

\begin{lem}[cf. Lemma 6.1 in \cite{CGMU16}]
$Z(f)$ is a residual subset of $X$. 
\end{lem}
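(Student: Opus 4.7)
\medskip

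\noindent\textbf{Proof plan.} My plan is to express $Z(f)$ as a countable intersection of open dense subsets of $X$ and invoke the Baire category theorem, which applies since $X$, being a compact metric space, is a Baire space. Concretely, by~\eqref{def Z} we have
$$
Z(f) = \bigcap_{n \geq 1} Z_n(f), \qquad Z_n(f) := X' \cap f^{-1}(X') \cap \cdots \cap f^{-(n-1)}(X'),
$$
so it suffices to prove by induction on $n \geq 1$ that each $Z_n(f)$ is open and dense in $X$.

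For openness, the base case $Z_1(f) = X' = \bigcup_{i \in \mathcal{A}} A_i$ is open by Definition~\ref{def1}(i). For the inductive step I use the recursion
$$
Z_{n+1}(f) \;=\; X' \cap f^{-1}(Z_n(f)) \;=\; \bigcup_{i \in \mathcal{A}} \Bigl(A_i \cap \varphi_i^{-1}(Z_n(f))\Bigr),
$$
which comes from Definition~\ref{def1}(ii): on each open piece $A_i$ the map $f$ coincides with the bi-Lipschitz (hence continuous) map $\varphi_i$. Assuming $Z_n(f)$ is open, each $\varphi_i^{-1}(Z_n(f))$ is open, and $A_i$ is open, so the above union is open.

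For density, the base case is again $Z_1(f) = X'$, which is dense by Definition~\ref{def1}(i). For the inductive step, assume $Z_n(f)$ is open and dense and pick any nonempty open $U \subset X$; I must produce a point of $Z_{n+1}(f)$ in $U$. By density of $X'$ there is $i \in \mathcal{A}$ with $V := U \cap A_i \neq \emptyset$, and I need to exhibit $x \in V$ with $\varphi_i(x) \in Z_n(f)$, that is, $\varphi_i(V) \cap Z_n(f) \neq \emptyset$. Fix $x_0 \in V$ and a small open ball $B(x_0, r) \subset V$ (which is connected by hypothesis on $X$). If $L$ denotes the Lipschitz constant of $\varphi_i^{-1}$ on $\varphi_i(X)$, the bi-Lipschitz property gives $\varphi_i(B(x_0,r)) \supset B(\varphi_i(x_0), r/L) \cap \varphi_i(X)$, so $\varphi_i(V)$ contains a relatively open subset of $\varphi_i(X)$. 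I would then argue that this relatively open subset must meet the open dense set $Z_n(f)$, using the connectedness of open balls of $X$ to rule out degenerate behavior of $\varphi_i(X)$ near $\varphi_i(x_0)$ (for instance, the bi-Lipschitz image of a connected ball cannot be squeezed inside a nowhere dense closed set without contradicting the corresponding statement at the previous level of the induction applied to the single point $\varphi_i(x_0)$).

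The main obstacle is precisely this density step: in abstract metric spaces bi-Lipschitz maps need not be open, and one has to leverage the structural hypothesis that open balls in $X$ are connected together with the fact that, by induction, $Z_n(f)$ is a finite intersection of open dense sets inside $X$, to guarantee that the relatively open set $\varphi_i(V) \subset \varphi_i(X)$ meets $Z_n(f)$. Once density of every $Z_n(f)$ is established, Baire's theorem delivers that $Z(f) = \bigcap_n Z_n(f)$ is a dense $G_\delta$, and in particular residual, as claimed.
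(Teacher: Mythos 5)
Your overall route is exactly the paper's: write $Z(f)=\bigcap_{n\ge 1}Z_n(f)$, use the recursion $Z_{n+1}(f)=X'\cap f^{-1}(Z_n(f))=\bigcup_{i}\bigl(A_i\cap\varphi_i^{-1}(Z_n(f))\bigr)$, and prove by induction that each $Z_n(f)$ is open and dense. Your openness argument is complete and correct. The problem is the density step, which you yourself flag as ``the main obstacle'' and then do not close. You correctly reduce it to showing that the nonempty relatively open set $B(\varphi_i(x_0),r/L)\cap\varphi_i(X)\subset\varphi_i(V)$ meets $Z_n(f)$, but the justification you offer --- that ``the bi-Lipschitz image of a connected ball cannot be squeezed inside a nowhere dense closed set without contradicting the corresponding statement at the previous level of the induction applied to the single point $\varphi_i(x_0)$'' --- is not an argument. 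The inductive hypothesis is a statement about $Z_n(f)$ being open and dense in $X$; it says nothing about how the image $\varphi_i(X)$ sits inside $X$, so there is no contradiction to be extracted from it, and no concrete use of the connectedness of balls is actually made. As written, the proof stops at precisely the point where the work is.

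For comparison, the paper dispatches this step by asserting that $\varphi_i^{-1}(Z_n(f))$ is open and dense in $X$ because $\varphi_i$ is bi-Lipschitz, and then noting that $\bigcup_i\bigl(A_i\cap\varphi_i^{-1}(Z_n(f))\bigr)$ is a union of sets dense in the respective $A_i$, whose union $X'$ is dense in $X$. So the single claim you are struggling with --- a bi-Lipschitz self-map pulls back open dense sets to open dense sets --- is exactly the key property the paper invokes. To make your write-up complete you must either prove that claim (equivalently, rule out that some nonempty relatively open piece of $\varphi_i(X)$ is contained in the closed nowhere dense set $X\setminus Z_n(f)$) or state it as the lemma you are using; the parenthetical you give does neither.
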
 
\begin{proof}
To simplify the notation, let $Z=Z(f)$ and $Z_n=Z_n(f)$.  Clearly, $Z_{n+1}=X'\cap f^{-1}(Z_{n})$.  By definition,  $Z_1 = X'$  is open and dense.  Using induction in $n$ we can show that $Z_n$ is open and dense for every $n\geq1$. Indeed, suppose that $Z_{n}$ is open and dense. Then, $\varphi_i^{-1}(Z_{n})$ is open and dense because $\varphi_i$ is bi-Lipschitz. Thus, $Z_{n+1}=X'\cap f^{-1}(Z_{n}) = \bigcup_i A_i\cap f^{-1}(Z_{n}) = \bigcup_i (f|_{A_i})^{-1}(Z_{n})=\bigcup_i (\varphi_i|_{A_i})^{-1}(Z_{n})=\bigcup_i A_i\cap \varphi_i^{-1}(Z_{n})$ is also open and dense.  Since $Z=\bigcap_{n\geq0} Z_n$, we conclude that $Z$ is a countable intersection of open and dense subsets of $X$.
\end{proof}

As in \eqref{def varphi}, given a $n$-tuple $\alpha=(i_0, i_1,\ldots, i_{n-1})$ in $\mathcal{A}^n$, we define the map
\begin{equation}\label{4344}
\varphi^\alpha = \varphi_{i_{n-1}} \circ\varphi_{i_{n-2}}\circ \cdots \circ \varphi_{i_1}\circ\varphi_{i_0}.
\end{equation}

Fix $x_0\in X$ and set 

\begin{equation}\label{omegaf}
\Omega(f) :=\bigcap_{m\geq1}\overline{\bigcup_{n\geq m}\{\varphi^\alpha(x_0)\colon \alpha\in\mathcal{I}_n(f)\}}.
\end{equation}

The following couple of lemmas are elementary. For the convenience of the reader we include a proof.
\begin{lem}
$\omega(f,x)\subseteq \Omega(f)$ for every $x\in Z(f)$.
\end{lem}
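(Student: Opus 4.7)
The plan is to exploit the contraction property to show that the orbit of any regular point $x$ is asymptotically indistinguishable from the pseudo-orbit obtained by applying the same sequence of branches $\varphi_{i_j}$ to the reference point $x_0$. The fact that open balls in $X$ are connected is not needed here; only compactness (hence finite diameter) and the uniform contraction rate $\lambda$ matter.

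Fix $x\in Z(f)$ with itinerary $i(x)=(i_0,i_1,\ldots)$, as granted by Definition~\ref{def itineraries}, and fix $y\in\omega(f,x)$. Choose a subsequence $n_k\to\infty$ with $f^{n_k}(x)\to y$. For each $n\ge1$, the tuple $\alpha_n:=(i_0,\ldots,i_{n-1})$ lies in $\mathcal{I}_n(f)$ by definition, and a straightforward induction using $f|_{A_{i_j}}=\varphi_{i_j}|_{A_{i_j}}$ yields
\begin{equation*}
f^n(x)=\varphi_{i_{n-1}}\circ\cdots\circ\varphi_{i_0}(x)=\varphi^{\alpha_n}(x).
\end{equation*}

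The key step is to swap $x$ for $x_0$ in the above expression at vanishing cost. Since each $\varphi_i$ has Lipschitz constant at most $\lambda<1$, the composition $\varphi^{\alpha_n}$ has Lipschitz constant at most $\lambda^n$, whence
\begin{equation*}
d\bigl(\varphi^{\alpha_n}(x),\varphi^{\alpha_n}(x_0)\bigr)\le \lambda^n\,d(x,x_0)\le \lambda^n\,\mathrm{diam}(X)\xrightarrow[n\to\infty]{}0.
\end{equation*}
Combined with $f^{n_k}(x)=\varphi^{\alpha_{n_k}}(x)\to y$, this gives $\varphi^{\alpha_{n_k}}(x_0)\to y$.

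To conclude, fix an arbitrary $m\ge1$. For all sufficiently large $k$ we have $n_k\ge m$ and $\alpha_{n_k}\in\mathcal{I}_{n_k}(f)$, so
\begin{equation*}
\varphi^{\alpha_{n_k}}(x_0)\in \bigcup_{n\ge m}\bigl\{\varphi^{\alpha}(x_0)\colon \alpha\in\mathcal{I}_n(f)\bigr\}.
\end{equation*}
Taking the limit, $y$ belongs to the closure of this set; since $m$ was arbitrary, $y\in\Omega(f)$ by the definition \eqref{omegaf}. This yields $\omega(f,x)\subseteq\Omega(f)$. There is no real obstacle in this proof — the only subtlety is the bookkeeping that the itinerary of order $n$ of a regular point is genuinely realized by some $\alpha\in\mathcal{I}_n(f)$, which is immediate from Definition~\ref{def itineraries} once one checks inductively that $f^n(x)=\varphi^{\alpha_n}(x)$.
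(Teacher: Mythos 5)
Your proof is correct and follows exactly the paper's argument: realize $f^{n}(x)=\varphi^{\alpha_n}(x)$ via the itinerary, use the $\lambda^{n}$-Lipschitz bound to replace $x$ by $x_0$ at vanishing cost, and pass to the limit to place $y$ in $\Omega(f)$. The only difference is that you spell out the final closure/intersection bookkeeping in \eqref{omegaf} more explicitly than the paper does.
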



\begin{proof} 
 Let $x\in Z(f)$ and $y\in \omega(f,x)$, then there exists a sequence $n_j\nearrow\infty$ such that $f^{n_j}(x)\to y$ as $j\to\infty$. Let $\alpha^{(j)}\in \mathcal{I}_{n_j}(f)$ denote the itinerary of order $n_j$ of $x$. Then, $f^{n_j}(x)=\varphi^{\alpha^{(j)}}(x)$.  Define $y_j:=\varphi^{\alpha^{(j)}}(x_0)$. Then $$d(f^{n_j}(x),y_j)=d(\varphi^{\alpha^{(j)}}(x),\varphi^{\alpha^{(j)}}(x_0))\leq \lambda^{n_j} d(x,x_0)\leq \lambda^{n_j}\text{diam}(X). 
$$
Thus, $y_j\to y$ as $j\to\infty$, i.e., $y\in \Omega(f)$.
 \end{proof}
 
 Recall that $\mathcal{I}_n(f)$ is the set of itineraries of order $n$ of $f$ (Definition~\ref{def itineraries}).
\begin{lem}\label{lem:covering} Given $n\ge 1$, the set $\Omega(f)$ can be covered by a finite union of closed balls of radius $2\text{diam}(X)\lambda^n$ centred at the points $\varphi^\alpha(x_0)$ with $\alpha\in \mathcal{I}_n(f)$.
\end{lem}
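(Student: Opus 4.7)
The strategy is to prove the stronger statement that for every $k\ge n$ and every $\beta\in \mathcal{I}_k(f)$, the point $\varphi^\beta(x_0)$ lies within distance $\mathrm{diam}(X)\lambda^n$ of some $\varphi^\alpha(x_0)$ with $\alpha\in\mathcal{I}_n(f)$. Once this is established, passing to closures and invoking the definition \eqref{omegaf} of $\Omega(f)$ will deliver the claimed finite cover, with a radius even better than the $2\,\mathrm{diam}(X)\lambda^n$ stated in the lemma.

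Fix $k\ge n$ and write $\beta=(j_0,\ldots,j_{k-1})$. I would split $\beta$ at position $k-n$ into a prefix $\gamma=(j_0,\ldots,j_{k-n-1})$ (with the convention $\varphi^\gamma=\mathrm{id}$ when $k=n$) and a suffix $\alpha=(j_{k-n},\ldots,j_{k-1})$. The first step is to verify that $\alpha\in\mathcal{I}_n(f)$: choosing any point $x$ realizing the itinerary $\beta$ (so $f^i(x)\in A_{j_i}$ for $0\le i<k$), the point $f^{k-n}(x)$ satisfies $f^i(f^{k-n}(x))\in A_{j_{k-n+i}}\subset X\setminus S(f)$ for $0\le i<n$, so it is regular of order $n$ and realizes $\alpha$. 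The composition formula \eqref{4344} then gives
$$\varphi^\beta=\varphi^\alpha\circ\varphi^\gamma,\qquad\text{hence}\qquad \varphi^\beta(x_0)=\varphi^\alpha\bigl(\varphi^\gamma(x_0)\bigr).$$
Condition (iii) of Definition~\ref{def1} ensures $\mathrm{Lip}(\varphi^\alpha)\le \lambda^n$, so
$$d\bigl(\varphi^\beta(x_0),\varphi^\alpha(x_0)\bigr)\le \lambda^n\, d\bigl(\varphi^\gamma(x_0),x_0\bigr)\le \mathrm{diam}(X)\,\lambda^n.$$

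Since $\mathcal{I}_n(f)\subseteq\mathcal{A}^n$ is finite, the set $\mathcal{B}:=\bigcup_{\alpha\in\mathcal{I}_n(f)}\overline{B}\bigl(\varphi^\alpha(x_0),\mathrm{diam}(X)\lambda^n\bigr)$ is a finite union of closed balls, hence closed. The previous paragraph shows that $\bigcup_{k\ge n}\{\varphi^\beta(x_0):\beta\in\mathcal{I}_k(f)\}\subset\mathcal{B}$, so also $\overline{\bigcup_{k\ge n}\{\varphi^\beta(x_0):\beta\in\mathcal{I}_k(f)\}}\subset\mathcal{B}$; taking $m=n$ in \eqref{omegaf} then yields $\Omega(f)\subset \mathcal{B}$, which is the required finite cover (a fortiori with the larger radius $2\,\mathrm{diam}(X)\lambda^n$).

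I do not foresee any serious obstacle. The only subtlety worth flagging is that the symbol sequence must be split at its tail rather than its head: with the convention of \eqref{4344}, in which $\varphi_{i_0}$ is applied first, it is the final $n$ coordinates that produce an outer factor $\varphi^\alpha$ whose contraction by $\lambda^n$ can be applied to two points of $X$ at distance at most $\mathrm{diam}(X)$. Splitting in the opposite order would leave the contraction gain trapped inside a composition that is then hit by the non-contracting outer factor, and the estimate would fail.
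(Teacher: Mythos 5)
Your proof is correct and follows essentially the same route as the paper: both split an itinerary of length $k\ge n$ at its tail, use that the last $n$ symbols again form an element of $\mathcal{I}_n(f)$, and apply the $\lambda^n$-Lipschitz bound on the outer composition to get $d(\varphi^\beta(x_0),\varphi^\alpha(x_0))\le \mathrm{diam}(X)\lambda^n$. The only (harmless) difference is in the last step: the paper approximates a point of $\Omega(f)$ by some $\varphi^{\alpha^{(j)}}(x_0)$ and uses the triangle inequality, which costs the factor $2$ in the radius, whereas you observe that the finite union of closed balls is closed and so contains the closure in the definition of $\Omega(f)$ directly, yielding the slightly sharper radius $\mathrm{diam}(X)\lambda^n$.
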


\begin{proof}
Given $y\in \Omega(f)$, there are $n_j\nearrow\infty$ and $ \alpha^{(j)}\in\mathcal{I}_{n_j}(f)$ such that $y_j:=\varphi^{\alpha^{(j)}}(x_0)\to y$ as $j\to\infty$. Take $j$ sufficiently large such that $n_j\geq n$ and $d(y,y_j)\leq \text{diam}(X) \lambda^{n}$. Denote by $[\alpha^{(j)}]$ the last $n$ entries of $\alpha^{(j)}$, then $[\alpha^{(j)}]\in \mathcal{I}_n(f)$. Moreover, 
$$
d(y_j, \varphi^{[\alpha^{(j)}]}(x_0) )=d(\varphi^{\alpha^{(j)}}(x_0),\varphi^{[\alpha^{(j)}]}(x_0) )\leq \text{diam}(X) \lambda^n,$$ which gives $d(y,\varphi^{[\alpha^{(j)}]}(x_0))\leq 2\text{diam}(X)\lambda^n$ by the triangle inequality.
\end{proof}

\begin{defn}\label{i-ii} We say that a collection $\mathscr{C} = \{C_j\}_{j\in\Lambda}$ of finitely many pairwise disjoint  subsets of $X$ is \textit{strongly invariant} if there exists $n_0\ge 1$ such \added{that} the following hypotheses are satisfied: 
\setlist[itemize]{leftmargin=8mm}
\begin{itemize}
\item [$(i)$] for each $A\in\mathscr{C}$, there is $A'\in\mathscr{C}$ such that  ${f(A)}\subset A'$;
\item [$(ii)$] for each $B\in\mathscr{C}$ and $n\ge n_0$, there is $B'\in\mathscr{C}$ such that  $\overline{f^n(B)}\subset B'$;
\item [$(iii)$] for each $C\in\mathscr{C}$, there is $A\in \{A_1,\ldots, A_N\}$ such that $C\subset A$.
\end{itemize}
\end{defn}
\begin{lem}\label{lem333} If  
$\mathscr{C}=\{C_j\}_{j\in \Lambda}$ is a strongly invariant collection of pairwise disjoint open subsets of $X$, then there exist  periodic orbits $\gamma_1,\ldots, \gamma_r$ of $f$ in $\cup_{j\in \Lambda} C_j$ such that  $\omega(f,y)\in \{\gamma_1,\ldots,\gamma_r\}$ for each $y\in \cup_{j\in \Lambda} C_j$.
\end{lem}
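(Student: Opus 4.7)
My plan is to reduce the dynamics of $f$ on $\bigcup_{j\in\Lambda} C_j$ to a return-map analysis on each cycle of a finite-state induced map on $\mathscr{C}$, and then invoke Banach's fixed-point theorem on the closure of each cycle element.

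First, using condition $(i)$ together with the pairwise disjointness of $\mathscr{C}$, I will define a self-map $F:\mathscr{C}\to\mathscr{C}$ uniquely characterized by $f(C)\subset F(C)$ (uniqueness: if $f(C)\subset A'\cap A''$ with $A'\neq A''$ then the nonempty set $f(C)$ would intersect two disjoint elements). Induction gives $f^n(C)\subset F^n(C)$ for all $n\geq 0$. Since $\mathscr{C}$ is finite, every $F$-orbit is eventually periodic, so $\mathscr{C}$ splits into finitely many $F$-cycles together with preperiodic transients. I will treat each cycle separately and then handle transients by pushing forward.

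Fix a cycle $A_0\to F(A_0)\to\cdots\to F^{p-1}(A_0)\to A_0$ of minimal period $p$. By condition $(iii)$, each $F^j(A_0)$ is contained in some $A_{i_j}\in\{A_1,\ldots,A_N\}$, so $f^p|_{A_0}$ coincides with the restriction of the \emph{globally defined} map $\Phi:=\varphi_{i_{p-1}}\circ\cdots\circ\varphi_{i_0}$ on $X$, which is $\lambda^p$-Lipschitz. Choose $k$ large enough that $kp\geq n_0$ and apply condition $(ii)$ to $B=A_0$, $n=kp$: it yields $B'\in\mathscr{C}$ with $\overline{f^{kp}(A_0)}\subset B'$. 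Since $f^{kp}(A_0)\subset F^{kp}(A_0)=A_0$ is nonempty, disjointness of $\mathscr{C}$ forces $B'=A_0$, hence $\Phi^k(\overline{A_0})\subset\overline{f^{kp}(A_0)}\subset A_0\subset\overline{A_0}$. As $\overline{A_0}$ is a closed subset of the compact space $X$ (hence complete) and $\Phi^k$ is a contraction of ratio $\lambda^{kp}<1$, Banach's fixed-point theorem furnishes a unique fixed point $x^*\in\overline{A_0}$, which lies in $A_0$ by the strict inclusion. Noting that $\Phi^k(f^p(x^*))=f^p(\Phi^k(x^*))=f^p(x^*)$ and $f^p(x^*)\in A_0$, uniqueness gives $f^p(x^*)=x^*$; since the orbit of $x^*$ visits the $p$ pairwise disjoint sets $A_0,F(A_0),\ldots,F^{p-1}(A_0)$, its period is exactly $p$. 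Call this orbit $\gamma\subset\bigcup_{j\in\Lambda}C_j$.

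Finally, to show $\omega(f,y)=\gamma$ for $y\in A_0$, I iterate the contraction on $\overline{A_0}$ to get $f^{kpm}(y)=\Phi^{km}(y)\to x^*$ as $m\to\infty$, and then use continuity of the globally defined $\varphi_{i_{j-1}}\circ\cdots\circ\varphi_{i_0}$ (which equals $f^j$ on $A_0$) to propagate this to $f^{kpm+j}(y)\to f^j(x^*)$ for each $0\leq j<kp$; partitioning the orbit of $y$ into these $kp$ subsequences yields $\omega(f,y)=\gamma$. If $y$ lies in another element of the same $F$-cycle, a bounded number of forward iterations brings it into $A_0$, and $\omega$-limits are invariant under such truncation. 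If $y\in C$ for a preperiodic $C\in\mathscr{C}$, some $f^N(y)\in F^N(C)$ lands on an $F$-cycle, so $\omega(f,y)=\omega(f,f^N(y))$ is one of the orbits produced above. The finiteness of $\mathscr{C}$ then gives the finite list $\gamma_1,\ldots,\gamma_r$. The main technical point to be careful with is the step where the return map $f^{kp}|_{A_0}$ is replaced by the globally Lipschitz $\Phi^k$ so that Banach can be applied on $\overline{A_0}$ (not merely on $A_0$, which need not be complete) and the identification of the fixed point's exact period, both of which rely crucially on condition $(iii)$ and the minimality of $p$.
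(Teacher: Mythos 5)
Your proposal is correct and follows essentially the same route as the paper: pass to the induced finite-state dynamics on $\mathscr{C}$, locate a cycle, use condition $(iii)$ to realize the return map as a globally defined composition of the $\varphi_i$'s, use condition $(ii)$ to get $\overline{f^{kp}(A_0)}\subset A_0$, and apply Banach's fixed-point theorem on the complete set $\overline{A_0}$. Your write-up is in fact somewhat more explicit than the paper's on the exact period of the resulting orbit and on the subsequence argument showing $\omega(f,y)=\gamma$.
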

\begin{proof} Since $\Lambda$ is finite in Definition \ref{i-ii}, we may assume that $\Lambda=\{1,\ldots, q\}$. Let $j_0\in \Lambda$. 
Since $\mathscr{C}=\{C_j\}_{j\in \Lambda}$ is a strongly invariant collection, by item (i) of Definition \ref{i-ii},
 there exists $\alpha=(j_0, j_1\ldots)\in \{1,\ldots,q\}^{\mathbb{N}}$ such that ${f\big(C_{j_n}\big)} \subset C_{j_{n+1}}$ for all $n\ge 0$.   
Moreover, there exist $m\ge 0$ and  $p\ge 1$ such that $j_m = j_{m+p}$. Then,
\begin{equation}\label{fpcim1}
 {f^p\big(C_{j_m}\big)} \subset C_{j_{m+p}} = C_{j_m}.
\end{equation}
Let $n_0\ge 1$ be as in Definition \ref{i-ii}. By induction, \eqref{fpcim1} implies
$
 {f^n\big(C_{j_m}\big)} \subset C_{j_m}
$
for   $n:=n_0p$.
In this way, by item (ii) of Definition \ref{i-ii}, since the sets in $\{C_j\}_{j\in\Lambda}$ are pairwise disjoint, we conclude that
\begin{equation}\label{overline} 
\overline{f^n(C_{j_m})}\subset C_{j_m}.
\end{equation}
By item (iii) of Definition \ref{i-ii}, for each $j\in \Lambda$, there exists a unique
$\tau(j) \in\mathcal{A}$ such that $C_{j}\subset A_{\tau(j)}$, where $\mathcal{A}=\{1,\ldots,N\}$ and $\{A_i\}_{i\in\mathcal{A}}$ denote, respectively, the label set and the partition of $f$. Hence, if we set $i_m= \tau (j_m), \ldots, i_{m+n-1} = \tau (i_{m+n-1})$, we obtain
\begin{equation}\label{fpcim3}
f^n (C_{j_m}) = \big(\varphi_{i_{m+n-1}} \circ \cdots\circ \varphi_{i_{m+1}}\circ\varphi_{i_m}\big)(C_{j_m}).
\end{equation}
Combining \eqref{overline},  \eqref{fpcim3} and the continuity of  $g=\varphi_{i_{m+n -1}} \circ \cdots\circ \varphi_{i_{m+1}}\circ\varphi_{i_m}$ leads to
$$ 
g\big(\overline{C_{j_m}}\big) \subset \overline{f^n(C_{j_m})}\subset C_{j_m}.
$$
Since $\overline{C_{j_m}}\subset X$ and $X$ is compact, we have that $\overline{C_{j_m}}$ is complete. Hence, by applying the Banach Fixed-Point Theorem to the contraction $g$ on $\overline{C_{j_m}}$, we conclude that there exists a unique fixed point  of $g$ in $C_{j_m}$. In terms of $f$, this means that there exists a unique periodic orbit $\gamma$ in $\cup_{\ell=m}^{m+n-1} C_{j_\ell}$ such that $\omega(f,y)=\gamma$ for all $y\in \cup_{\ell=0}^{m+n-1} C_{j_\ell}$. The proof is concluded by repeating this procedure finitely many times until exhausting all the sets in $\mathscr{C}$.      
\end{proof}

Denote by $\mathscr{C}^{(1)}=\{A_i\}_{i\in \mathcal{A}}$ the partition of $f$, and let $\mathscr{C}^{(n)}$ with $n\geq 2$ be the collection of non-empty open sets 
\begin{equation}\label{Qna}
A^{\alpha} =A_{i_0}\cap\varphi_{i_0}^{-1}(A_{i_1})\cap\cdots\cap (\varphi_{i_{n-2}}\circ\cdots\circ\varphi_{i_0})^{-1}(A_{i_{n-1}})
\end{equation}
with $\alpha=(i_0,i_1,\ldots,i_{n-1}) \in \mathcal{I}_n(f)$. Each set $A^{\alpha}\in \mathscr{C}^{(n)}$ with $\alpha\in\mathcal{I}_n(f)$  consists of all regular points of order $n$ with itinerary of order $n$ equal to $\alpha$. Let $\ell\colon X\to \Nn\cup\{+\infty\}$ be defined by
$$
\ell(x)=\inf\{n\geq1\colon x\text{ is not a regular point of order $n$}\}.
$$

Using the convention $\inf\emptyset=+\infty$, we have $\ell(x)=+\infty$ iff $x$ is a regular point. Moreover,  $\ell(x)=1$ iff $x\in S(f)$.  Otherwise, $1<\ell(x)<+\infty$ iff $f^{\ell(x)-1}(x)\in S(f)$ and there is $\alpha=(i_0,i_1,\ldots,i_{\ell(x)-2})\in \mathcal{I}_{\ell(x)-1}(f)$ such that $f^n(x)\in A_{i_n}$ for every $0\leq n<\ell(x)-1$.

\begin{lem}\label{lem:suff1} If $\Omega(f)\cap S(f)=\emptyset$, then there is $n\in \Nn$ such that all regular points of order $n$ are regular points and $\mathscr{C}^{(n)}$ is strongly invariant. 
\end{lem}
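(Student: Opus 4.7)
The compactness of $X$ and the closedness of $\Omega(f)$ and $S(f)=X\setminus X'$ turn the hypothesis into the quantitative gap $\eta:=d(\Omega(f),S(f))>0$. The plan is to choose $N$ so large that (a) every $\varphi^\alpha(x_0)$ with $\alpha\in\mathcal{I}_n(f)$ and $n\ge N$ is within $\eta/4$ of $\Omega(f)$ and (b) the contraction factor satisfies $\lambda^N\mathrm{diam}(X)<\eta/4$. Then the images $\varphi^\gamma(A^\alpha)$ are pinned inside tiny balls anchored near $\Omega(f)$, and a connectedness argument places them deep inside a single element of the partition $\{A_i\}$, which is enough to get both the regularity conclusion and the strong invariance of $\mathscr{C}^{(N)}$.

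\textbf{Key step: ball containment.} Property (a) follows from the definition \eqref{omegaf} of $\Omega(f)$ by a standard contradiction/subsequential limit argument (any hypothetical sequence $\varphi^{\alpha^{(k)}}(x_0)$ with $n_k\to\infty$ staying $\delta$-away from $\Omega(f)$ would admit a subsequential limit inside $\Omega(f)$). With (a) in place, $d(\varphi^\alpha(x_0),S(f))\ge 3\eta/4$, so the open ball $B(\varphi^\alpha(x_0),3\eta/4)$ is disjoint from $S(f)$ and hence contained in $X'$; connectedness of open balls together with the pairwise disjointness of the open sets $\{A_i\}_{i\in\mathcal{A}}$ then force
\[
B(\varphi^\alpha(x_0),3\eta/4)\subset A_{j(\alpha)}
\]
for a unique label $j(\alpha)\in\mathcal{A}$. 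Picking also $\lambda^N\mathrm{diam}(X)<\eta/4$, any regular $y$ of order $N$ with itinerary $\alpha\in\mathcal{I}_N(f)$ will satisfy $d(f^N(y),\varphi^\alpha(x_0))\le\lambda^N\mathrm{diam}(X)<\eta/4$, so $f^N(y)\in A_{j(\alpha)}\subset X'$. Setting $i_N:=j(\alpha)$ (the same for every $y\in A^\alpha$), I would iterate this argument along $f(y),f^2(y),\ldots$ to build the whole itinerary of $y$, giving $y\in Z(f)$. The identification $f^N(A^\alpha)\subset A_{j(\alpha)}$ simultaneously yields $f(A^\alpha)\subset A^{(i_1,\ldots,i_{N-1},j(\alpha))}\in\mathscr{C}^{(N)}$, which is condition (i) of Definition~\ref{i-ii}; condition (iii) is the trivial inclusion $A^\alpha\subset A_{i_0}$.

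\textbf{Main obstacle: condition (ii).} Iterating (i) gives $f^n(A^\alpha)\subset B_n:=A^{(i_n,\ldots,i_{n+N-1})}\in\mathscr{C}^{(N)}$, but strong invariance requires $\overline{f^n(A^\alpha)}\subset B_n$, i.e.\ the closure must sit \emph{strictly inside} the open set $B_n$; this strictness is exactly what iterating (i) alone does not give. I would close the gap by promoting the ball containment of the previous step from a single letter to the full window of length $N$: applying $\varphi_{i_n},\varphi_{i_{n+1}},\ldots,\varphi_{i_{n+N-2}}$ in succession to $B(\varphi^\gamma(x_0),3\eta/4)$, where $\gamma=(i_0,\ldots,i_{n-1})$, and using at step $k$ that the ball of radius $3\eta/4$ around $\varphi^{(\gamma,i_n,\ldots,i_{n+k-1})}(x_0)$ lies in $A_{i_{n+k}}$, yields
\[
B(\varphi^\gamma(x_0),3\eta/4)\subset B_n.
\]
Combined with $\overline{f^n(A^\alpha)}=\overline{\varphi^\gamma(A^\alpha)}\subset\overline{B}(\varphi^\gamma(x_0),\lambda^n\mathrm{diam}(X))$, fixing $n_0$ with $\lambda^{n_0}\mathrm{diam}(X)<3\eta/4$ (so that the closed ball of radius $\lambda^n\mathrm{diam}(X)$ sits strictly inside the open ball of radius $3\eta/4$ for every $n\ge n_0$) gives the required $\overline{f^n(A^\alpha)}\subset B_n$ for all $n\ge n_0$, proving (ii).
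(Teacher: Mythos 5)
Your proof is correct and follows essentially the same strategy as the paper's: a compactness gap between $\Omega(f)$ and $S(f)$, the contraction estimate $\lambda^n\mathrm{diam}(X)$ pinning $f^n(A^\alpha)$ near $\varphi^\alpha(x_0)$, and connectedness of open balls forcing each such neighbourhood into a single element of the partition. The only divergence is cosmetic and concerns condition (ii): the paper iterates (i) and invokes its uniform distance bound from $S(f)$ to upgrade to closure containment, whereas you nest the fixed ball $B(\varphi^\gamma(x_0),3\eta/4)$ inside $B_n$ directly — both close the same strictness gap.
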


\begin{proof}
Since both sets $\Omega(f)$ and $S(f)$ are compact,  by hypothesis, there exist $n_0>0$ and $\varepsilon>0$ such that
\begin{equation}\label{lem:hypothesis}
d( \varphi^\alpha(x_0),z)\geq \varepsilon,\quad \forall\,n\geq n_0,\,\forall \alpha\in \mathcal{I}_n(f),\,\forall z\in S(f).
\end{equation}
Let $n_1\ge \max\, \{n_0, 2\}$ be such that 
$$\text{diam}(X)\lambda^n< \frac{\varepsilon}{2}\quad \textrm{for all} \quad n\ge n_1.$$ 

Now we show that all regular points of order $n_1$ are in fact regular points, i.e., 
\begin{equation}\label{lem:claim 2} 
Z(f) = \bigcup_{ \alpha \in \mathcal{I}_{n_1}(f)} A^{\alpha}.
\end{equation}
Indeed,  suppose by contradiction that there is a regular point $y$ of order $n_1$ that is not a regular point of order $n_2$ for some $n_2>n_1$.   Then $n_1<\ell(y)<+\infty$ and  there is $\alpha\in \mathcal{I}_{\ell(y)-1}(f)$ such that $z:=f^{\ell(y)-1}(y)=\varphi^\alpha(y)\in S(f)$ and
$$
d(\varphi^\alpha(x_0),z) = d(\varphi^\alpha(x_0),\varphi^\alpha(y))\leq \text{diam}(X)\lambda^{\ell(y)-1}<\frac{\varepsilon}{2},
$$
which contradicts \eqref{lem:hypothesis}.

It remains to show that $\mathscr{C}^{(n_1)}$ is a strongly invariant collection (see Definition  \ref{i-ii}).  
We claim that
\begin{equation}\label{d2} 
d\big(f^n(y), z\big)> \frac{\varepsilon}{2},  \quad \forall y\in Z(f),\,\,
\forall z\in S(f), \,\, \forall n\ge n_1.
\end{equation}
In fact, given $y\in Z(f)$ and $n\ge n_1$, there exists $\alpha=(i_0, i_1,\ldots, i_{n-1})\in\mathcal{I}_n(f)$ such that
$f^n(y) = \varphi^\alpha (y)$. Then, 
$$ d\big(f^n(y), \varphi^\alpha (x_0) \big) = d\big(\varphi^\alpha(y), \varphi^\alpha(x_0) \big) \le \textrm{diam}(X)\lambda^n< \dfrac{\varepsilon}{2}, \quad \forall\, n\ge n_1.
$$
The claim now follows by using the triangle inequality in \eqref{lem:hypothesis}.

We start by verifying that $\mathscr{C}^{(n_1)}$ satisfies item (i) of Definition \ref{i-ii}. 
Let $\alpha=(i_0, i_1, \ldots, i_{n_1-1})\in \mathcal{I}_{n_1}(f)$ and $A^{\alpha}\in \mathscr{C}^{(n_1)}$. By \eqref{lem:claim 2},  we have
$$ 
f\big(A^{\alpha}\big)\subset  \bigcup_{ j:(i_1, \ldots, i_{n_1-1}, j)\in \mathcal{I}_{n_1}(f)} A^{(i_1, \ldots, i_{n_1-1}, j)}. 
$$
We claim that there is $i_{n_1}\in\mathcal{A}$ such that
\begin{equation}\label{lem: item i}
f\big(A^{\alpha}\big)\subset A^{\alpha'}\,,
\end{equation} where $\alpha'=(i_1, \ldots, i_{n_1-1}, i_{n_1})\in \mathcal{I}_{n_1}(f)$.

To prove the claim, we argue by contradiction: let $\alpha=(i_0,i_1,\ldots,i_{n_1-1})\in \mathcal{I}_{n_1}(f)$ and suppose that there are $x,y \in A^{\alpha}$ and
 $j, j' \in \mathcal{A}$ with $j\neq j'$ such that  $f(x)\in A^{(i_1,\ldots,i_{n_1-1}, j)}$ and $f(y)\in A^{(i_1,\ldots, i_{n_1-1}, j')}$. In particular, $f^{n_1}(x)\in A_{j}$ and $f^{n_1}(y)\in A_{j'}$. Then, 
$$
d(f^{n_1}(x),f^{n_1}(y))=d(\varphi^\alpha(x),\varphi^\alpha(y))\leq \text{diam}(X)\lambda^{n_1}< \frac{\varepsilon}{2},
$$
which means that both $f^{n_1}(x)$ and $f^{n_1}(y)$ belong to an open ball $B_{\frac{\varepsilon}{2}}$ of radius $\varepsilon/2$. Because in the metric space $(X,d)$, open balls are connected, we have that $B_{\frac{\varepsilon}{2}}$ is a connected set that intersects the pairwise disjoint connected sets $A_j$ and $A_{j'}$. In this way, $B_{\frac{\epsilon}{2}}$ must contain a point of $S(f)$. But this shows that $f^{n_1}(x)$ is $\varepsilon/2$-close to $S(f)$,  which contradicts \eqref{d2}.
 Hence, the claim is true. Therefore,  item (i) of Definition \ref{i-ii} holds.

We will now verify item (ii) of Definition \ref{i-ii}. Let $\alpha\in \mathcal{I}_{n_1}(f)$ and $n\ge n_1$.  By applying \eqref{lem: item i} finitely many times, we have $f^{n}\big(A^{\alpha}\big)\subset A^{\beta}$ for some $\beta\in \mathcal{I}_{n_1}(f)$.  Moreover, by \eqref{d2},  we conclude that 
$\overline{f^{n}\big(A^{\alpha}\big)}\subset A^{\beta}$.
This shows that item (ii) of Definition \ref{i-ii} holds for every $n\geq n_1$.

Finally,  item (iii) of Definition \ref{i-ii} is automatic from the definition of $\mathscr{C}^{(n)}$ given in \eqref{Qna}.

 \end{proof}

Thus, we obtain a sufficient condition for $f$ to be asymptotically periodic on $Z(f)$. 

\begin{lem}\label{lem:suff} If $\Omega(f)\cap S(f)=\emptyset$, then $f$ is asymptotically periodic on $Z(f)$.
\end{lem}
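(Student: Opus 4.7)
The plan is to deduce Lemma~\ref{lem:suff} as an immediate consequence of the two preparatory lemmas already proved, namely Lemma~\ref{lem:suff1} and Lemma~\ref{lem333}, so the argument should be short and essentially a bookkeeping exercise.

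First I would invoke Lemma~\ref{lem:suff1} with the hypothesis $\Omega(f)\cap S(f)=\emptyset$ to obtain an integer $n\in\Nn$ such that every regular point of order $n$ is in fact a regular point and such that the collection $\mathscr{C}^{(n)}=\{A^{\alpha}\colon \alpha\in \mathcal{I}_n(f)\}$ is strongly invariant in the sense of Definition~\ref{i-ii}. The first of these conclusions gives the identity
\begin{equation*}
Z(f)=\bigcup_{\alpha\in\mathcal{I}_n(f)} A^{\alpha},
\end{equation*}
so $Z(f)$ coincides with the union of the sets in $\mathscr{C}^{(n)}$. I would then observe that the sets $A^{\alpha}$, as defined in \eqref{Qna}, are pairwise disjoint and open (each $A^{\alpha}$ is a finite intersection of preimages, under the bi-Lipschitz maps $\varphi_i$, of open sets in the partition $\{A_i\}_{i\in\mathcal{A}}$), so that $\mathscr{C}^{(n)}$ is precisely the type of collection to which Lemma~\ref{lem333} applies.

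Next I would apply Lemma~\ref{lem333} to $\mathscr{C}^{(n)}$ to produce finitely many periodic orbits $\gamma_1,\ldots,\gamma_r$ of $f$ contained in $\bigcup_{\alpha}A^{\alpha}=Z(f)$ with the property that $\omega(f,y)\in\{\gamma_1,\ldots,\gamma_r\}$ for every $y\in Z(f)$. Taking the union of these $\omega$-limit sets as $y$ ranges over $Z(f)$ yields
\begin{equation*}
\omega(f,Z(f))=\bigcup_{y\in Z(f)}\omega(f,y)\subseteq \gamma_1\cup\cdots\cup\gamma_r,
\end{equation*}
and since every $\gamma_i$ lies in $Z(f)$ and is itself a union of iterates of its own points, each $\gamma_i$ either appears in the above union or can simply be dropped from the list. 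In either case $\omega(f,Z(f))$ is a (finite) union of periodic orbits, which is exactly the definition of $f$ being asymptotically periodic on $Z(f)$ (Definition~\ref{def asymptotically periodic}).

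Since both ingredients are already in place, there is no real obstacle: the only thing to verify carefully is that the hypotheses of Lemma~\ref{lem333} are met by $\mathscr{C}^{(n)}$, namely that its members are pairwise disjoint and open, which is transparent from \eqref{Qna}, and that it is strongly invariant, which is exactly what Lemma~\ref{lem:suff1} supplies. Accordingly I would write the proof as two sentences: apply Lemma~\ref{lem:suff1} to choose $n$, then apply Lemma~\ref{lem333} to $\mathscr{C}^{(n)}$ and invoke Definition~\ref{def asymptotically periodic}.
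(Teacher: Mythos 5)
Your proposal is correct and is exactly the paper's argument: the paper proves Lemma~\ref{lem:suff} by the one-line combination of Lemma~\ref{lem:suff1} (which yields $n$ with $Z(f)=\bigcup_{\alpha\in\mathcal{I}_n(f)}A^{\alpha}$ and $\mathscr{C}^{(n)}$ strongly invariant) and Lemma~\ref{lem333}. Your write-up merely makes the bookkeeping explicit, and the details you check (openness and pairwise disjointness of the $A^{\alpha}$, and the passage from $\omega(f,y)\in\{\gamma_1,\ldots,\gamma_r\}$ to $\omega(f,Z(f))$ being a finite union of periodic orbits) are all sound.
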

\begin{proof}
It follows from Lemma~\ref{lem333} and Lemma~\ref{lem:suff1}.
\end{proof}

\subsection{Proof of Theorem~\ref{th:main}}\label{proof:main}
In what follows, assume that  $(X,d)$ is a compact metric space whose open balls are connected and   $\lambda \in (0,1)$. Suppose that $U\subset\mathbb{R}^M$ is a set of positive Lebesgue measure and $\{f_{\mu}\}_{\mu\in U}$ is a family of $\lambda$-piecewise contractions on $X$
satisfying the hypotheses (E) and (T) at $\mu^*\in U$.  Recall that $S_\mu=S(f_\mu)$ is the singular set of $f_\mu$ and $Z_\mu=Z(f_\mu)$ is the set of regular points of $f_\mu$ (see \eqref{def Z}). Let $x_0\in X$, $\varepsilon_0>0$, $\delta_0>0$, $n_0\ge 1$, $a>0$ and $c>0$ be as in Definition \ref{transversality}.
Let
$$
V:=\{\mu\in U \colon f_\mu\text{ is not asymptotically periodic on }Z_\mu\}.
$$
We denote by $\{\varphi_{i,\mu}\}$ the iterated function system associated to each piecewise $\lambda$-contraction $f_{\mu}$, as in Definition \ref{def1}. Given an $n$-tuple $\alpha=(i_0,i_1,\ldots,i_{n-1})\in\mathcal{A}^n$, we let $\varphi^{\alpha}_\mu$ denote the map defined in \eqref{def varphi}.
 Replacing $f$ by $f_{\mu}$ and $\varphi^{\alpha}$ by
$\varphi^{\alpha}_\mu$ in \eqref{omegaf}, we obtain the definition of $\Omega_\mu:=\Omega(f_{\mu})$. 

The claim of Theorem \ref{th:main}  is trivially true if $X$ consists of a single point. Hence, we may assume that 
$X$ has at least two points. In this way,  $\mathrm{diam}(X)>0$ and all the lemmas proved in this section hold.

Let $\delta>0$ be arbitrary. By Lemma~\ref{lem:suff} and by the definition of $U_{\delta}(\mu^*)$ given in \eqref{J},
\begin{equation}\label{A1}
V\cap U_{\delta}(\mu^*)\subset \{\mu\in U_{\delta}(\mu^*)\colon \Omega_\mu \cap S_\mu\neq\emptyset\}.
\end{equation}
By Lemma~\ref{lem:covering},  for each $n\ge 1$ and for each $\mu\in U_{\delta}(\mu^*)$, we can cover $\Omega_\mu$ using at most $\# \mathcal{J}_n^\delta(\mu^*)$ closed balls of radius  $\varepsilon_n:=2\text{diam}(X) \lambda^n$ centred at the points $\varphi^{\alpha}_\mu(x_0)$ with $\alpha\in\mathcal{I}_n(f_\mu)\subset \mathcal{J}_{n}^\delta(\mu^*)$. Hence, by \eqref{A1}, we obtain
\begin{equation}\label{A2}
\text{Leb}^*(V\cap U_{\delta}(\mu^*))\leq \sum_{\alpha\in \mathcal{J}_n^\delta(\mu^*)}\text{Leb}^*\big(\{\mu\in U_{\delta}(\mu^*)\colon d(\varphi_\mu^\alpha(x_0),S_{\mu})\leq \varepsilon_n\}\big).
\end{equation}
Let $n_1$ be the least positive integer such that $\varepsilon_n<\varepsilon_0$ for all $n\ge n_1$.  By Hypothesis (T), for each $n\ge \max\, \{n_0 ,n_1\}$, 
\begin{equation}\label{A3}
\text{Leb}^*\big(\{\mu\in U_{\delta_0}(\mu^*)\colon d(\varphi_\mu^\alpha(x_0),S_{\mu})\leq \varepsilon_n\}\big)\leq c\, \varepsilon_n^a,\quad \forall\, \alpha\in\mathcal{J}_n^{\delta_0}(\mu^*).
\end{equation}
Moreover, $0<\delta\le \delta_0$ implies $U_{\delta}(\mu^*)\subset U_{\delta_0}(\mu^*)$ and $\mathcal{J}_n^{\delta}(\mu^*)\subset \mathcal{J}_n^{\delta_0}(\mu^*)$ so that \eqref{A3} reads
\begin{equation}\label{A4}
\text{Leb}^*\big(\{\mu\in U_{\delta}(\mu^*)\colon d(\varphi_\mu^\alpha(x_0),S_{\mu})\leq \varepsilon_n\}\big)\leq c\, \varepsilon_n^a,\quad \forall\, \alpha\in \mathcal{J}_n^{\delta}(\mu^*).
\end{equation}
By \eqref{A2}  and \eqref{A4}, for all $0<\delta\le \delta_0$, $n\ge \max\,\{n_0, n_1\}$ and $\alpha\in\mathcal{J}_n^{\delta}(\mu^*)$,
$$
\text{Leb}^*(V\cap U_{\delta}(\mu^*))\leq c\,\varepsilon_n^a\#\mathcal{J}_n^\delta(\mu^*).$$
 Therefore, making $n\to\infty$ and using the definition of $\varepsilon_n$ yields
\begin{equation}\label{A5}
\text{Leb}^*(V\cap U_{\delta}(\mu^*))\leq c\,(2\text{diam}(X))^a\limsup_{n\to\infty}\lambda^{an}\#\mathcal{J}_n^\delta(\mu^*).
\end{equation}
for all $0<\delta \le \delta_0$.

Since $0< \lambda<1$, there exists $b>0$ so small that $\lambda^a e^{a b}<1$. By Hypothesis (E), there exist $0<\delta<\delta_0$ and $n_2\in\mathbb{N}$ such that
\begin{equation}\label{A6}
 \lambda^{an} \#\mathcal{J}_n^{\delta}(\mu^*) <  \lambda^{an} e^{ab n}=\big(\lambda^a e^{ab}\big)^n,\quad\forall n\ge n_2.
\end{equation}
By \eqref{A5} and \eqref{A6}, we obtain    $\textrm{Leb}^* \big(V\cap U_{\delta}(\mu^*)\big)=0$ for some $0<\delta<\delta_0$. This concludes the proof of Theorem~\ref{th:main}.
\qed

\subsection{Sufficient condition for Hypothesis (E)}

Let $\{f_\mu\}_{\mu\in U}$ denote a family of piecewise $\lambda$-contractions on $X$. In what follows, we establish a sufficient condition that guarantees that $\{f_\mu\}_{\mu\in U}$ satisfies Hypothesis (E) at $\mu^*\in U$. 

First we  introduce the notion of multiplicity following \cite{MR1462857}.  Given a finite collection $\mathscr{C}$ of subsets of $X$, the \textit{multiplicity} of $\mathscr{C}$ is the number
$$
\text{mult}(\mathscr{C}) :=\max_{x\in X }\#\{A\in\mathscr{C}\colon x\in \overline{A}\}.
$$
Given $m\in\Nn$, a real number $r>0$ is \textit{$m$-compatible} with $\mathscr{C}$ if every open ball with radius $r$ meets at most $m$ elements of $\mathscr{C}$. If $r$ is $\text{mult}(\mathscr{C})$-compatible with $\mathscr{C}$, then we simply say that $r$ is \textit{compatible} with $\mathscr{C}$. Denote by $B_r(x)$ the open ball of radius $r>0$ centred at $x\in X$.

\begin{lem}\label{positive tau} There exists $r>0$ such that $r$ is compatible with $\mathscr{C}$.
\end{lem}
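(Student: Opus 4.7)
The plan is to argue by contradiction using compactness of $X$ and finiteness of $\mathscr{C}$, exploiting only the definition of multiplicity. Let $m=\text{mult}(\mathscr{C})$ and suppose, for contradiction, that no $r>0$ is compatible with $\mathscr{C}$. Then for every $n\in\Nn$, there exists a center $x_n\in X$ such that the ball $B_{1/n}(x_n)$ meets at least $m+1$ distinct members of $\mathscr{C}$.

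Since $\mathscr{C}$ is a finite collection, there are only finitely many $(m+1)$-element subcollections of $\mathscr{C}$. Hence, by the pigeonhole principle, I can extract a subsequence (still indexed by $n$) along which a fixed $(m+1)$-tuple of distinct sets $A_1,\ldots, A_{m+1}\in\mathscr{C}$ satisfies $A_i\cap B_{1/n}(x_n)\neq\emptyset$ for every $n$ and every $i\in\{1,\ldots,m+1\}$. For each such $n$ and $i$, choose a witness $y_i^{(n)}\in A_i\cap B_{1/n}(x_n)$, so that $d(y_i^{(n)},x_n)<1/n$.

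By compactness of $X$, the sequence $(x_n)$ admits a convergent subsequence $x_{n_k}\to x^*\in X$. Since $d(y_i^{(n_k)},x_{n_k})<1/n_k\to 0$, each auxiliary sequence $y_i^{(n_k)}$ also converges to $x^*$. As $y_i^{(n_k)}\in A_i$ for every $k$, we conclude $x^*\in\overline{A_i}$ for $i=1,\ldots,m+1$. This yields
\[
\#\{A\in\mathscr{C}\colon x^*\in\overline{A}\}\ \geq\ m+1,
\]
contradicting the definition $m=\text{mult}(\mathscr{C})=\max_{x\in X}\#\{A\in\mathscr{C}\colon x\in\overline{A}\}$. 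Hence some $r>0$ must be compatible with $\mathscr{C}$, completing the proof.

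There is no real obstacle here: the only subtlety is making sure the $m+1$ sets selected at each scale can be stabilized into a fixed $(m+1)$-tuple before taking the limit, which is precisely what the finiteness of $\mathscr{C}$ provides via pigeonhole.
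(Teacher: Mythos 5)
Your proof is correct and follows essentially the same argument as the paper: contradiction, pigeonhole to stabilize the $(m+1)$-tuple of sets meeting the shrinking balls, compactness to extract a convergent subsequence of centres, and convergence of the witness points to conclude the limit lies in $m+1$ closures. No issues.
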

\begin{proof}
Suppose, by contradiction, that for every $n\geq1$ there is $x_n\in X$ such that the open ball $B_{1/n}(x_n)$ intersects at least $m:=\text{mult}(\mathscr{C})+1$ elements of $\mathscr{C}$, say $A_1^{(n)},\ldots,A_m^{(n)}\in \mathscr{C}$. Since $\mathscr{C}$ is a finite collection, by the pigeonhole principle,  
there exist $A_1,\ldots, A_m\in\mathscr{C}$ and $n_k\to\infty$ as $k\to\infty$
 such that  $A_1^{(n_k)}=A_1,\ldots,A_m^{(n_k)} =A_m$ for all $k\ge 1$. Since $X$ is compact, by taking a subsequence of $\{n_k\}$ if necessary, we may assume that  there exists $x\in X$ such that $x_{n_k}\to x$ as $k\to\infty$. For $ i=1,\ldots, m$ and $k\ge 1$, let $y_i^{(n_k)} \in A_{i}\cap B_{\frac{1}{n_k}}\big(x_{n_k}\big)$.
 By the triangle inequality, we have that $y_i^{(n_k)}\to x$ as $k\to\infty$, for $i=1,\ldots,m$. Thus, $x\in \overline{A_i}$ for $i=1,\ldots,m$, which contradicts the fact that $\#\{A\in\mathscr{C}\colon x\in\overline{A}\}\leq \text{mult}(\mathscr{C})$. 
\end{proof}

Given $m\in\Nn$, define
$$
\tau_m(\mathscr{C}) :=\sup\left\{r>0\colon r \text{ is $m$-compatible with }\mathscr{C}\right\},
$$
and $\tau(\mathscr{C})=\tau_{\text{mult}(\mathscr{C})}(\mathscr{C})$. Notice that $\tau_m(\mathscr{C})=+\infty$ if $m\geq \# \mathscr{C}$. By Lemma~\ref{positive tau}, we have $\tau_m(\mathscr{C})>0$ whenever $m\geq\text{mult}(\mathscr{C})$.

Denote by $\mathcal{A}=\{1,\ldots, N\}$ the label set of $\{f_\mu\}_{\mu\in U}$ and by $\mathscr{C}_\mu^{(1)}=\{A_{i,\mu}\}_{i\in\mathcal{A}}$, $\mu\in U$,  the partition of $f_\mu$ as in Definition \ref{def2}. 
Let $\mathscr{C}^{(n)}_\mu$ with $n\geq 2$ be the collection of non-empty open sets 
\begin{equation}\label{Qna2}
A_\mu^{\alpha}:=A_{i_0, \mu}\cap\varphi_{i_0,\mu}^{-1}(A_{i_1,\mu})\cap\cdots\cap (\varphi_{i_{n-2},\mu}\circ\cdots\circ\varphi_{i_0,\mu})^{-1}(A_{i_{n-1}, \mu} ), 
\end{equation}
where $\alpha=(i_0,i_1,\ldots,i_{n-1})$  runs over the set $\mathcal{A}^n$.

In what follows, given two subsets $A,B$ of the metric space $(X,d)$, we denote by $d_H(A,B)$ the Hausdorff distance of $A$ and $B$ defined as
$$ 
d_H(A,B) = \max \left\{\sup_{a\in A} d(a,B), \sup_{b\in B} d(b, A) \right\}, 
$$
where $d(x,A) = \inf_{a\in A} d(x,a)$, $x\in X$.

\begin{defn}\label{stablep}
Given $n\geq 1$, we say that $\big\{\mathscr{C}^{(n)}_\mu\big\}_{\mu\in U}$ is \textit{stable} at $\mu^*\in U$ if there is $\delta>0$ such that  the following conditions are satisfied:
\begin{itemize}
\item [$(i) $] $\mathcal{I}_n(f_\mu)=\mathcal{I}_n(f_{\mu^*})$ for every  $\mu\in U_\delta(\mu^*)$;
\item [$(ii) $] $d_H\left( A^{\alpha}_{\mu},  A^{\alpha}_{\mu^*} \right)\to 0$  as $\mu\to \mu^*$,  for every $\alpha\in \mathcal{I}_n(f_{\mu^*})$.
\end{itemize}
\end{defn}

As in \cite{MR1462857}, we introduce the multiplicity entropy.

\begin{defn}\label{def:mult entropy}
The \textit{multiplicity entropy} of $f_\mu$ is
$$
H_{\mathrm{mult}}(f_\mu)=\limsup_{n\to\infty}\frac1n\log \mathrm{mult}(\mathscr{C}^{(n)}_{\mu}).
$$
\end{defn}
Informally, the multiplicity entropy measures the exponential growth rate of the number of itineraries sharing a common prefix. 

The following result establishes a sufficient condition for Hypothesis (E) that is easier to check. 
 
\begin{prop}\label{lem26}
If $\big\{\mathscr{C}^{(n)}_\mu\big\}_{\mu\in U}$ is stable at $\mu^*\in U$ for every $n\geq 1$ and $H_{\mathrm{mult}}(f_{\mu^*})=0$, 
then $\{f_\mu\}_{\mu\in U}$ satisfies Hypothesis (E) at $\mu^*$.
\end{prop}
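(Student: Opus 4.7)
The plan is to control the counting in two stages: first establish sub-exponential growth of $M_n := \#\mathcal{I}_n(f_{\mu^*})$ at the base parameter from the multiplicity-entropy hypothesis, and then transport this bound to the union $\#\mathcal{J}_n^\delta(\mu^*)$ by means of the stability hypothesis and a block decomposition of itineraries.

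For the first stage, fix $k \geq 1$ and set $m_k := \mathrm{mult}(\mathscr{C}^{(k)}_{\mu^*})$ and $\tau_k := \tau(\mathscr{C}^{(k)}_{\mu^*})$, which is positive by Lemma~\ref{positive tau}. For any $\alpha \in \mathcal{I}_n(f_{\mu^*})$ the image $\varphi^\alpha_{\mu^*}(A^\alpha_{\mu^*})$ has diameter at most $\lambda^n \mathrm{diam}(X)$, so once $n$ is large enough that $\lambda^n\mathrm{diam}(X) < \tau_k$, this image is contained in an open ball of radius less than $\tau_k$ and thus meets at most $m_k$ atoms of $\mathscr{C}^{(k)}_{\mu^*}$. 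Since a $k$-letter word $\beta$ extends $\alpha$ to a valid itinerary in $\mathcal{I}_{n+k}(f_{\mu^*})$ if and only if $A^\beta_{\mu^*}$ meets $\varphi^\alpha_{\mu^*}(A^\alpha_{\mu^*})$, this yields $M_{n+k} \leq m_k\, M_n$ for all sufficiently large $n$. Iterating gives $\limsup_n n^{-1}\log M_n \leq k^{-1}\log m_k$ for every $k$, and $H_{\mathrm{mult}}(f_{\mu^*}) = 0$ then forces $\limsup_n n^{-1}\log M_n = 0$.

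For the second stage, fix $K \geq 1$. The stability of $\{\mathscr{C}^{(K)}_\mu\}_{\mu\in U}$ at $\mu^*$ supplies $\delta_K > 0$ with $\mathcal{I}_K(f_\mu) = \mathcal{I}_K(f_{\mu^*})$ for every $\mu \in U_{\delta_K}(\mu^*)$. Let $\alpha = (i_0, \ldots, i_{n-1}) \in \mathcal{J}_n^{\delta_K}(\mu^*)$ be realized as the order-$n$ itinerary of a regular point $x$ for some $f_\mu$ with $\mu \in U_{\delta_K}(\mu^*)$. For each $j = 0, \ldots, \lfloor n/K\rfloor - 1$ the point $f_\mu^{jK}(x)$ is regular of order $K$, so its order-$K$ itinerary $(i_{jK}, \ldots, i_{(j+1)K-1})$ lies in $\mathcal{I}_K(f_\mu) = \mathcal{I}_K(f_{\mu^*})$. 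Splitting $n = qK + r$ with $0 \leq r < K$ and bounding the tail trivially by $N^r$ yields
$$
\#\mathcal{J}_n^{\delta_K}(\mu^*) \leq M_K^q \cdot N^r \leq N^K\, M_K^{n/K},
$$
so $\limsup_n n^{-1}\log\#\mathcal{J}_n^{\delta_K}(\mu^*) \leq K^{-1}\log M_K$. Given any $\varepsilon > 0$, the first stage supplies $K$ with $K^{-1}\log M_K < \varepsilon$, and monotonicity of $\#\mathcal{J}_n^\delta(\mu^*)$ in $\delta$ extends the bound $\leq \varepsilon$ to all $0 < \delta \leq \delta_K$, delivering Hypothesis (E) in the limit $\varepsilon \to 0^+$.

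The main obstacle I anticipate is the geometric bookkeeping in the first stage, namely justifying that a set of diameter strictly less than $\tau_k$ meets no more than $m_k$ atoms of $\mathscr{C}^{(k)}_{\mu^*}$; this reduces to the fact that $m$-compatibility of a radius is preserved under shrinking, so every open ball of radius smaller than $\tau_k$ meets at most $m_k$ atoms. The second stage is then a purely combinatorial consequence of condition $(i)$ of Definition~\ref{stablep}; condition $(ii)$ on Hausdorff convergence does not seem to be required for this proposition.
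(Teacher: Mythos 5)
Your proof is correct, but it reaches Hypothesis (E) by a genuinely different route from the paper's. Your first stage (the submultiplicative bound $M_{n+k}\le m_k M_n$ obtained by noting that $\varphi^\alpha_{\mu^*}(A^\alpha_{\mu^*})$ has diameter $<\tau_k$ and hence meets at most $m_k$ atoms of $\mathscr{C}^{(k)}_{\mu^*}$) is exactly the counting mechanism the paper uses, and your worry about shrinking radii is easily dispatched: $m$-compatibility is inherited by smaller radii, so every $r<\tau_k$ is $m_k$-compatible. The divergence is in how the bound is transported to the union $\mathcal{J}_n^\delta(\mu^*)$. The paper first proves (its Claims A and B) that the multiplicity bound $m$ and a uniform positive compatibility radius $r=\inf_{\mu\in U_{\delta_0}(\mu^*)}\tau_m(\mathscr{C}^{(p)}_\mu)$ persist for all $\mu$ near $\mu^*$ — compactness arguments that genuinely use condition $(ii)$ of Definition~\ref{stablep} (Hausdorff convergence of the atoms) — and then runs the recursion $\#\mathcal{I}_{n+p}(f_\mu)\le m\,\#\mathcal{I}_n(f_\mu)$ at every nearby parameter. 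You instead run the recursion only at $\mu^*$ and transfer the conclusion by decomposing each $\alpha\in\mathcal{J}_n^{\delta_K}(\mu^*)$ into length-$K$ blocks, each of which lies in $\mathcal{I}_K(f_\mu)=\mathcal{I}_K(f_{\mu^*})$ by condition $(i)$ alone, giving $\#\mathcal{J}_n^{\delta_K}(\mu^*)\le M_K^{\lfloor n/K\rfloor}N^{K}$. This buys two things: it bypasses the two compactness claims entirely, and it shows that condition $(ii)$ of stability is not needed for this proposition — only the local constancy of the order-$K$ itinerary sets. It also sidesteps a subtlety in the paper's passage from the per-parameter bound $\#\mathcal{I}_{n+p}(f_\mu)\le m\,\#\mathcal{I}_n(f_\mu)$ to the bound on the union $\#\mathcal{J}_{n+p}^\delta(\mu^*)\le m\,\#\mathcal{J}_n^\delta(\mu^*)$, since your block count bounds the union directly. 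The trade-off is that the paper's uniform control of $\mathrm{mult}(\mathscr{C}^{(p)}_\mu)$ and $\tau_m(\mathscr{C}^{(p)}_\mu)$ over a neighborhood is information of independent use, whereas your argument extracts only the cardinality bound.
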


\begin{proof}
Given $\varepsilon>0$,  choose $p\in\Nn$ sufficiently large so that $ m:=\mathrm{mult}(\mathscr{C}^{(p)}_{\mu^*}) <e^{\varepsilon p}$.  We start by proving two claims.

\medskip

\noindent Claim A.  There is $\delta_0=\delta_0(p)>0$ such that  $\mathrm{mult}(\mathscr{C}^{(p)}_{\mu})\leq m$ for every $\mu\in U_{\delta_0}(\mu^*)$.  

\medskip

Suppose by contradiction that there is a sequence $(\mu_k)_{k\geq1}$ converging to $\mu^*$ such that $\mathrm{mult}(\mathscr{C}^{(p)}_{\mu_k})>m$.  Hence, for each $k\ge 1$, there is $x_k\in X$ such that $\#\{A\in\mathscr{C}^{(p)}_{\mu_k}\colon x_k\in \overline{A}\}>m$.  This implies that there are $\alpha_{k,1},\ldots,\alpha_{k,m+1}\in \mathcal{I}_p(f_{\mu_k})$ such that $x_k\in\bigcap_{i=1}^{m+1}\overline{A_{\mu_k}^{\alpha_{k,i}}}$.  Since $\big\{\mathscr{C}_{\mu}^{(p)}\big\}_{\mu\in U}$ is stable at $\mu^*$,  by item $(i)$ of \mbox{Definition \ref{stablep}},
$\mathcal{I}_p(f_{\mu_k})= \mathcal{I}_p(f_{\mu^*})$ for all $k\in\mathbb{N}$  big enough.  Therefore,  by the compactness of $X$ and the pigeonhole principle,  there exist an infinite subset $\mathbb{N}'\subset\mathbb{N}$, $x\in X$  and $\alpha_{1},\ldots,\alpha_{m+1}\in \mathcal{I}_p(f_{\mu^*})$ such that $x_k\to x$ as $k\to\infty$ in $\mathbb{N}'$ and $x_{k}\in\bigcap_{i=1}^{m+1}\overline{A_{\mu_{k}}^{\alpha_{i}}}$ for every $k\in\mathbb{N}'$.  We affirm that  $x\in \bigcap_{i=1}^{m+1}\overline{A_{\mu^*}^{\alpha_{i}}}$. In fact, 
\begin{equation}\label{a44}
 d(x, A_{\mu^*}^{\alpha_i}) \le d(x, x_k) + d\big(x_k,  {A_{\mu_k}^{\alpha_i}}\big) + d_H\big(  {A_{\mu_k}^{\alpha_i}}, A_{\mu^*}^{\alpha_i}\big), \,\, \forall k\in\mathbb{N}, \,\, \forall i\in \{1,\ldots, m+1\}
\end{equation}
Since $x_{k}\in \overline{A_{\mu_{k}}^{\alpha_{i}}}$, we have that $d\big(x_k, {A_{\mu_k}^{\alpha_i}}\big) =0$. By \eqref{a44}, by  item $(ii)$ of \mbox{Definition \ref{stablep}}  and by the fact that $d(x,x_k)\to 0$ as $k\to\infty$ in $\mathbb{N}'$, we conclude that $d\big(x,A_{\mu^*}^{\alpha_i}\big)=0$, that is, $x\in \overline{A_{\mu^*}^{\alpha_i} }$. Because $i$ is arbitrary, we have that $x\in \bigcap_{i=1}^{m+1}\overline{A_{\mu^*}^{\alpha_{i}}}$, which implies that $\mathrm{mult}(\mathscr{C}^{(p)}_{\mu^*})\geq \#\{A\in\mathscr{C}^{(p)}_{\mu^*}\colon x\in \overline{A}\}>m$, yielding a contradiction and proving Claim A.

\medskip 

\noindent Claim B. Let $\delta_0$ be as in Claim A, then
$
\displaystyle r:=\inf_{\mu\in U_{\delta_0}(\mu^*)}\tau_m(\mathscr{C}^{(p)}_{\mu})
>0.$

\medskip 

By Lemma~\ref{positive tau} and Claim A,  we have $\tau_m(\mathscr{C}^{(p)}_{\mu})>0$ for every $\mu\in U_{\delta_0}(\mu^*)$.  Now,  
suppose by contradiction  that there is a sequence $(\mu_k)_{k\geq1}$ converging to $\mu^*$ such that $\tau_m(\mathscr{C}^{(p)}_{\mu_k})\to 0$ as $k\to\infty$. So, for some $k_0\ge 1$ and for each $k\geq k_0$,  there exist  $x_k\in X$ and $r_k>\tau_m(\mathscr{C}^{(p)}_{\mu_k})$ such that $r_k\to 0$ as $k\to\infty$ and $\#\{A\in \mathscr{C}^{(p)}_{\mu_k}\colon B_{r_k}(x_k)\cap A\neq \emptyset\}>m$. Proceeding as in in the proof of Claim A, we can show that there exist an infinite subset $\mathbb{N}'\subset\mathbb{N}$, $x\in X$  and $\alpha_{1},\ldots,\alpha_{m+1}\in \mathcal{I}_p(f_{\mu^*})$ such that $x_k\to x$ as $k\to\infty$ in $\mathbb{N}'$ and $d\big(x_k, A_{\mu_k}^{\alpha_i}\big)<r_k$ for every $k\in\mathbb{N}'$ and $i\in \{1,\ldots,m+1\}$. By \eqref{a44}, we conclude that $x\in \bigcap_{i=1}^{m+1}\overline{A_{\mu^*}^{\alpha_{i}}}$, which implies that $\mathrm{mult}(\mathscr{C}^{(p)}_{\mu^*})\geq \#\{A\in\mathscr{C}^{(p)}_{\mu^*}\colon x\in \overline{A}\}>m$, yielding a contradiction and proving Claim B. 

\medskip 

Now,  we conclude the proof of the proposition using Claim B.  For each  
 $n\ge 1$, $\mu\in U$ and  $\alpha\in \mathcal{I}_n(f_\mu)$, since $f_{\mu}^n$ is a Lipschitz $\lambda^n$-contraction on $A_\mu^{\alpha}\in \mathscr{C}^{(n)}_\mu$, we have that, 
$$
\textrm{diam}\left(f_\mu^n\big(A_\mu^{\alpha} \big) \right)\le \lambda^n \textrm{diam}\left(A_\mu^{\alpha}\right)\le \lambda^n \textrm{diam}(X).
$$
By Claim B, choosing $n_0\geq 1$ so that $\rho:=\lambda^{n_0} \textrm{diam}(X)< r$, we get 
$$\textrm{diam}(f_\mu^n\big(A_\mu^{\alpha} \big) )\leq \rho< r\leq \tau_{m}(\mathscr{C}_\mu^{(p)}), \quad \forall n\geq n_0, \quad \forall \mu\in U_{\delta_0}(\mu^*). $$   Hence, for all $n\ge n_0$, we obtain
  $$ \# \left\{\beta\in \mathcal{I}_{p}(f_\mu): f_\mu^n (A_\mu^{\alpha})\cap A_\mu^{\beta}\neq\emptyset \right\}\le m,\quad \forall\,n\geq n_0, \quad \forall\mu\in U_{\delta_0}(\mu^*).
  $$
  In this way, for each $n\ge n_0$  and $\mu\in U_{\delta_0}(\mu^*)$,
  $$ \#\mathcal{I}_{n+p}(f_\mu) \le m \#\mathcal{I}_n(f_{\mu}).
  $$
 Therefore,
  $$ \#\mathcal{J}_{n+p}^{\delta}(\mu^*) \le m\# \mathcal{J}_n^{\delta}(\mu^*),\quad \forall \delta\in [0,\delta_0), \quad\forall n\ge n_0. 
  $$
In particular, we have that
$$ 
 \#\added{\mathcal{J}}_{n_0+ip}^\delta(\mu^*) \le m^i \#\added{\mathcal{J}}_{n_0}^\delta(\mu^*), ,\quad \forall \delta\in [0,\delta_0), \quad\forall i\ge 0.
 $$
Hence,  we obtain
  $$  \#\mathcal{J}_{n}^{\delta}(\mu^*) \le m^{\frac{n-n_0}{p}+1}\# \mathcal{J}_{n_0}^{\delta}(\mu^*), \quad \forall \delta\in [0,\delta_0), \quad\forall n\ge n_0. 
  $$  
By our choice of $p$, for $C:=\log\left(m^{(p-n_0)/p}\# \mathcal{J}_{n_0}^{\delta_0}(\mu^*)\right)$, we have that
$$\log  \#\mathcal{J}_{n}^{\delta}(\mu^*) \le n\varepsilon+C, \quad \forall \delta\in [0,\delta_0), \quad\forall n\ge n_0. 
$$
Thus,
$$ \limsup_{n\to\infty} \frac1n \log\#\mathcal{J}_n^\delta(\mu^*) \le \varepsilon, \quad \forall \delta\in [0,\delta_0).
$$
Since $\varepsilon>0$ is arbitrary, we conclude that hypothesis (E) holds true at $\mu^*$.

\end{proof}

\section{One-dimensional piecewise contractions}\label{onedimensionalpcs}

In order to illustrate the applicability of Theorem \ref{th:main}, we will use it to give an alternative proof of the almost surely asymptotic periodicity of one-dimensional piecewise contractions with varying partition originally proved in \cite[Theorem 1.4]{NPR18}. 

Throughout this section, let $N\geq2, \,\, 0<\lambda<1$ and $\varphi_1,\ldots,\varphi_N:X\to X$ be bi-Lipschitz $\lambda$-contractions on $X=[0,1]$.  Consider  the open set $$U=\{(\mu_1,\ldots,\mu_{N-1})\in\mathbb{R}^{N-1}: 0< \mu_1<\cdots<\mu_{N-1}<1\}. $$
 For each $\mu=(\mu_1,\ldots,\mu_{N-1})\in U$, let $A_{i,\mu}=(\mu_{i-1},\mu_i)$ for $i=1,\ldots,N$. where we set $\mu_0=0$ and $\mu_N=1$, and let $f_\mu:[0,1]\to [0,1]$ be a piecewise $\lambda$-contraction with label set $\mathcal{A}=\{1,\ldots, N\}$,  partition set $\{A_{i,\mu}\}_{i\in\mathcal{A}}$ and  iterated function system $\{\varphi_i\}_{i\in\mathcal{A}}$. We denote by $S_
 \mu=S(f_\mu)$ the singular set of $f_\mu$. 

We make two assumptions:
\begin{enumerate} 
\item[(A1)] For each $\mu\in U$, $f_{\mu}$ is either left or right continuous at each point of the singular set $S_{\mu}$;
\item[(A2)] $\varphi_i([0,1])\subset (0,1)$ for every $i\in\mathcal{A}$. 
\end{enumerate}

The assumption (A2) is  convenient and not restrictive,  since we can always extend the domain of the bi-Lipschitz contractions $\varphi_i$'s to a larger interval,  having the same dynamics of $f_\mu$ in $X$ and satisfying the desired assumption for the iterated function system.

Based on Theorem \ref{th:main}, we will proof the following result.

\begin{thm}[{Nogueira-Pires-Rosales (2018)}] \label{th:NPR2}
Let $\{f_\mu\}_{\mu\in U}$ be a family of piecewise $\lambda$-contractions on $[0,1]$ satisfying assumptions (A1) and (A2). For Lebesgue almost every $\mu\in U$, the map $f_{\mu}$  is asymptotically periodic and the periodic orbits are regular.
\end{thm}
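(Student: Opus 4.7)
The plan is to apply Theorem~\ref{th:main} at a full-measure set of parameters $\mu^*\in U$, obtaining asymptotic periodicity on $Z_\mu$ for a.e. $\mu$ in a neighborhood of each such $\mu^*$; a countable subcover then yields the a.e. statement on all of $U$, and finally assumption (A1) is used to pass from $Z_\mu$ to $[0,1]$ and to confirm that the periodic orbits are regular. The crux of the proof is thus the verification of Hypotheses (E) and (T).

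Hypothesis (T) is the easy half. Because the IFS $\{\varphi_i\}$ is independent of $\mu$, so is the composition $\varphi_\mu^\alpha=\varphi^\alpha$. Fix any $x_0\in[0,1]$ and set $y^\alpha:=\varphi^\alpha(x_0)$. Assumption (A2) guarantees that $y^\alpha$ is at a uniform positive distance from $\{0,1\}$, so for every sufficiently small $\varepsilon$ the condition $d(y^\alpha,S_\mu)\leq\varepsilon$ reduces to $|\mu_j-y^\alpha|\leq\varepsilon$ for some $j\in\{1,\ldots,N-1\}$. Each such slab has $(N-1)$-dimensional Lebesgue measure at most $c\,\varepsilon$ inside $U_{\delta_0}(\mu^*)$, yielding (T) at every $\mu^*\in U$ with exponent $a=1$.

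For Hypothesis (E) I will invoke Proposition~\ref{lem26}. The collection $\mathscr{C}^{(n)}_\mu$ is a family of pairwise disjoint open subintervals of $[0,1]$, so at most two of them share any closure point, giving $\mathrm{mult}(\mathscr{C}^{(n)}_\mu)\leq 2$ uniformly in $n$ and in particular $H_{\mathrm{mult}}(f_\mu)=0$ for every $\mu$. For the stability hypothesis I write $A^\alpha_\mu=(L^\alpha(\mu),R^\alpha(\mu))$, where $L^\alpha$ and $R^\alpha$ are continuous functions of $\mu$ obtained respectively as the maximum and minimum over $k=0,\ldots,n-1$ of values of the form $(\varphi_{i_{k-1}}\circ\cdots\circ\varphi_{i_0})^{-1}(\mu_j)$ with $j\in\{i_k-1,i_k\}$. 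Setting $V_{n,\alpha}:=\{\mu\in U:L^\alpha(\mu)<R^\alpha(\mu)\}$, one has $\alpha\in\mathcal{I}_n(f_\mu)$ iff $\mu\in V_{n,\alpha}$, and stability of $\{\mathscr{C}^{(n)}_\mu\}$ at $\mu^*$ for the pair $(n,\alpha)$ holds exactly when $\mu^*\notin\partial V_{n,\alpha}\subset\{L^\alpha=R^\alpha\}$. The equation $L^\alpha(\mu)=R^\alpha(\mu)$ forces an equality of the form $(\varphi_{i_{k_1-1}}\circ\cdots\circ\varphi_{i_0})^{-1}(\mu_{j_1})=(\varphi_{i_{k_2-1}}\circ\cdots\circ\varphi_{i_0})^{-1}(\mu_{j_2})$; this locus is Lebesgue null in $U\subset\Rr^{N-1}$, being a continuous graph when the two coordinates differ, or a discrete zero set of a nontrivial Lipschitz function when they coincide. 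Taking the countable union over $(n,\alpha)$ yields a null set $E\subset U$ such that stability holds at every $n$ for all $\mu^*\in U\setminus E$, so Proposition~\ref{lem26} delivers (E) at each such $\mu^*$.

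Combining, at every $\mu^*\in U\setminus E$ both (E) and (T) are in force, and Theorem~\ref{th:main} gives $\delta(\mu^*)>0$ such that $f_\mu$ is asymptotically periodic on $Z_\mu$ for a.e. $\mu\in U_{\delta(\mu^*)}(\mu^*)$; since $U\setminus E$ is second-countable, a countable subcover propagates this to a.e. $\mu\in U$. To extend from $Z_\mu$ to $[0,1]$, observe that $S_\mu$ is finite and $f_\mu|_{S_\mu}$ is well-defined by (A1); any orbit that never enters $Z_\mu$ must therefore be eventually periodic inside $S_\mu$ by the pigeonhole principle. Existence of a periodic orbit in $S_\mu$ is a finite algebraic condition on $\mu$ of the form $\varphi^\beta(\mu_i)=\mu_j$, hence defines a further Lebesgue null set which we discard. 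For the remaining full-measure set of $\mu$ one has $[0,1]=\bigcup_{n\geq 0}f_\mu^{-n}(Z_\mu)$, so Remark~\ref{rem1} upgrades asymptotic periodicity on $Z_\mu$ to asymptotic periodicity on $[0,1]$, with all periodic orbits contained in $\omega(f_\mu,Z_\mu)\subset Z_\mu$ and therefore regular. The main technical obstacle in this plan is the measure-zero estimate for $\partial V_{n,\alpha}$: one must rule out, uniformly across the countable family of pairs $(n,\alpha)$, that two different compositions of the bi-Lipschitz contractions $\varphi_i$ collapse onto each other along a subset of $U$ of positive Lebesgue measure.
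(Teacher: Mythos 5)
Your strategy coincides with the paper's own proof: Hypothesis (T) is verified exactly as in the paper (the $\varphi^\alpha$ do not depend on $\mu$, (A2) keeps $\varphi^\alpha(x_0)$ away from $\{0,1\}$, and each slab $|\mu_j-\varphi^\alpha(x_0)|\le\varepsilon$ has measure $O(\varepsilon)$); Hypothesis (E) is obtained from Proposition~\ref{lem26} via $\mathrm{mult}(\mathscr{C}^{(n)}_\mu)\le 2$ plus stability off a null set; and the upgrade from $Z_\mu$ to $[0,1]$ uses the pigeonhole principle on the finite set $S_\mu$ after discarding a further null set. The paper packages the excluded parameters as those admitting a \emph{singular connection} $\varphi^\alpha(\mu_i)=\mu_j$ (Lemma~\ref{lem:sing connections}), which is precisely your endpoint coincidence $\psi_{k_1}^{-1}(\mu_{j_1})=\psi_{k_2}^{-1}(\mu_{j_2})$ rewritten, for $k_2>k_1$, as $g(\mu_{j_1})=\mu_{j_2}$ with $g:=\varphi_{i_{k_2-1}}\circ\cdots\circ\varphi_{i_{k_1}}$.

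The one step you flag as unresolved is a genuine gap as written, but it is exactly the step the paper closes with a one-line observation. Your justification for the case $j_1=j_2$ (``a discrete zero set of a nontrivial Lipschitz function'') does not suffice: a nontrivial Lipschitz function of one variable can vanish on a set of positive measure. The correct argument is that $\psi_{k_1}^{-1}(\mu_j)=\psi_{k_2}^{-1}(\mu_j)=y$ forces $g(\mu_j)=g(\psi_{k_1}(y))=\psi_{k_2}(y)=\mu_j$, so $\mu_j$ must equal the \emph{unique fixed point} of the $\lambda$-contraction $g$; the locus is therefore a single hyperplane, which is case $i=j$ of Lemma~\ref{lem:sing connections}. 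Two smaller points to tighten: your $L^\alpha,R^\alpha$ are only continuous away from the (null) set of $\mu$ for which some $\mu_j$ coincides with an endpoint of the range of some composition $\psi_k$, so those parameters should also be discarded before invoking stability; and the inclusion $\omega(f_\mu,Z_\mu)\subset Z_\mu$ you use to conclude regularity of the periodic orbits is not automatic---it is cleaner to argue, as in Lemma~\ref{ppar}, that a non-regular periodic point would itself produce a singular connection $\varphi^\beta(\mu_i)=\mu_i$, which has already been excluded.
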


We now proceed to prove Theorem~\ref{th:NPR2}.

\subsection{Proof of Theorem~\ref{th:NPR2}}
Denote by $\Phi=\{\varphi_i\}_{i\in\mathcal{A}}$ the iterated function system of $f_{\mu}$ and by $\varphi^{\alpha}$ the map defined in \eqref{4344}.  Notice that the maps in the iterate function system do not depend on $\mu$. According to Definition~\ref{def2}, $\{f_\mu\}_{\mu\in U}$  is a family of piecewise $\lambda$-contractions of the interval $[0,1]$ sharing
the same label set $\mathcal{A}$ and the same iterated function system $\Phi$. However, the partition $\{A_{i,\mu}\}_{i\in\mathcal{A}}$ of $f_\mu$ varies with the parameter $\mu\in U$. Notice that the singular set of $f_{\mu}$ is
$$
S_{\mu}=\{0,\mu_1,\mu_2,\ldots,\mu_{N-1},1\}.
$$
\begin{defn}
Given $\mu\in U$, we say that the pair $(\Phi,\mu)$ has a \textit{singular connection} if there are $n\geq 1$ and $\alpha\in\mathcal{A}^n$ such that $\varphi^\alpha(\mu_i)=\mu_j$ for some $(i,j)\in\{1,\ldots,N-1\}^2$. 
\end{defn}
\begin{lem}\label{lem:sing connections}
For Lebesgue almost every $\mu\in U$, the pair $(\Phi,\mu)$ has no singular connection.
\end{lem}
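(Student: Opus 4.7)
\medskip

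\noindent\textbf{Proof plan for Lemma \ref{lem:sing connections}.}

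The plan is to write the set of $\mu\in U$ for which $(\Phi,\mu)$ has a singular connection as a countable union of zero-measure sets. More precisely, for each $n\geq 1$, $\alpha\in\mathcal{A}^n$ and $(i,j)\in\{1,\ldots,N-1\}^2$, define
$$
B_{n,\alpha,i,j} := \{\mu\in U\colon \varphi^\alpha(\mu_i)=\mu_j\}.
$$
By the definition of a singular connection, the ``bad'' set is exactly $\bigcup_{n\geq 1}\bigcup_{\alpha\in\mathcal{A}^n}\bigcup_{i,j} B_{n,\alpha,i,j}$, which is a countable union (since $\mathcal{A}^n$ is finite for each $n$). Thus, it suffices to prove that each $B_{n,\alpha,i,j}$ has Lebesgue measure zero in $\mathbb{R}^{N-1}$.

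The key observation is that, under the hypotheses of this section, the iterated function system $\Phi=\{\varphi_i\}_{i\in\mathcal{A}}$ does not depend on $\mu$. Consequently, $\varphi^\alpha\colon[0,1]\to[0,1]$ is a fixed $\lambda^n$-contraction, and the defining condition $\varphi^\alpha(\mu_i)=\mu_j$ involves only the two coordinates $\mu_i$ and $\mu_j$ of $\mu$. I would now split into two cases.

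If $i\neq j$, then for each choice of $(\mu_k)_{k\neq j}$, the equation $\varphi^\alpha(\mu_i)=\mu_j$ determines $\mu_j$ uniquely as a function of $\mu_i$. Hence the section of $B_{n,\alpha,i,j}$ obtained by fixing the remaining coordinates is a single point in $\mathbb{R}$, and Fubini's theorem yields $\mathrm{Leb}(B_{n,\alpha,i,j})=0$. If $i=j$, then the condition reads $\varphi^\alpha(\mu_i)=\mu_i$, i.e., $\mu_i$ is a fixed point of the contraction $\varphi^\alpha$ on $[0,1]$. By the Banach fixed point theorem, $\varphi^\alpha$ has a unique fixed point, so $\mu_i$ is constrained to take a single value; again by Fubini, $\mathrm{Leb}(B_{n,\alpha,i,i})=0$.

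Neither step should present a real obstacle: the whole argument is a standard ``countable union of graphs is null'' computation, made possible by the fact that $\Phi$ is parameter-independent. The only point that merits care is keeping clear the roles of the two indices $i,j$ and noticing that, regardless of whether $i=j$ or not, the equation $\varphi^\alpha(\mu_i)=\mu_j$ always cuts out a lower-dimensional subset of $\mathbb{R}^{N-1}$. Taking the countable union over $n$, $\alpha$, $i$ and $j$ completes the proof.
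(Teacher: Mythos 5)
Your proposal is correct and follows essentially the same route as the paper: decompose the bad set into the countable family $B^{\alpha}_{ij}$, then observe that for $i=j$ the coordinate $\mu_i$ is pinned to the unique fixed point of the contraction $\varphi^\alpha$, while for $i\neq j$ the set is the graph of a (Lipschitz) function of $\mu_i$, hence null. The only cosmetic difference is that you invoke Fubini explicitly where the paper simply cites the nullity of graphs of Lipschitz functions.
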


\begin{proof}
 Given $n\geq1$,  $\alpha\in\mathcal{A}^n$ and  $(i,j)\in\{1,\ldots,N-1\}^2$, consider the set defined by
$$
B^{\alpha}_{ij}=\left\{(\mu_1,\ldots,\mu_{N-1})\in U\colon \varphi^\alpha(\mu_i)=\mu_j  \right\}.
$$
We claim that $B^{\alpha}_{ij}$ is a Lebesgue null set. Indeed, taking into account that $\varphi^\alpha$ is a Lipschitz contraction, we have two cases:
\begin{enumerate}
\item In the first case, $i=j$, the equality $\varphi^\alpha(\mu_i)=\mu_i$ implies that $\mu_i$ equals the unique fixed point of $\varphi^\alpha$. Thus, $B_{ii}^\alpha$ is a null set. 
\item In the second case, $i\neq j$,  being the graph of a Lipschitz function,  $B_{ij}^\alpha$ is a null set.
\end{enumerate}
Therefore, 
$$
B:=\bigcup_{n\geq1}\bigcup_{\alpha\in \mathcal{A}^n} \bigcup_{i=1}^{N-1}  \bigcup_{j=1}^{N-1} B^{\alpha}_{ij}
$$
is  a null set. This means that for each $\mu\in U{\setminus} B$, the pair $(\Phi,\mu)$ has no singular connection. 
\end{proof}
Let

$$
U^*=\left\{\mu\in U\colon \text{$(\Phi,\mu)$ has no singular connection}\right\}.
$$

\begin{lem}\label{ppar}
The periodic points of $f_{\mu}$ are regular for every $\mu\in U^*$.
\end{lem}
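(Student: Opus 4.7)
My plan is to argue by contradiction. Suppose $\mu\in U^*$ and $x$ is a periodic point of $f_\mu$ with period $p$ that is \emph{not} regular. Then, by Definition~\ref{regularp}, some iterate $y:=f_\mu^j(x)$ lies in $S_\mu=\{0,\mu_1,\ldots,\mu_{N-1},1\}$. The point $y$ is itself periodic, say with period $q\ge 1$, so $f_\mu^q(y)=y$.

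The first step is to rule out $y\in\{0,1\}$ using assumption (A2). Indeed, $\varphi_i([0,1])\subset(0,1)$ for every $i\in\mathcal{A}$, together with (A1), ensures that $f_\mu(z)\in(0,1)$ for every $z\in[0,1]$. Iterating, $f_\mu^k(y)\in(0,1)$ for all $k\ge 1$, hence $y=f_\mu^q(y)\in(0,1)$. Therefore $y=\mu_i$ for some $i\in\{1,\ldots,N-1\}$.

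Next, I will express $f_\mu^q(y)$ as a single composition $\varphi^\alpha$ applied to $y$. For each $k=0,1,\ldots,q-1$, let $z_k:=f_\mu^k(y)$. I need to exhibit a label $i_k\in\mathcal{A}$ such that $f_\mu(z_k)=\varphi_{i_k}(z_k)$. If $z_k$ is a regular point of $f_\mu$, so that $z_k\in A_{i_k,\mu}$ for a unique $i_k$, the identity follows from Definition~\ref{def1}. If $z_k\in S_\mu$, which may occur since a periodic orbit passing through $S_\mu$ may revisit it, then by assumption (A1), $f_\mu$ is left- or right-continuous at $z_k=\mu_{j_k}$, which forces $f_\mu(z_k)$ to equal $\varphi_{j_k}(\mu_{j_k})$ or $\varphi_{j_k+1}(\mu_{j_k})$; in either case there is a valid label $i_k\in\mathcal{A}$. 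Setting $\alpha:=(i_0,i_1,\ldots,i_{q-1})\in\mathcal{A}^q$ and unfolding the composition (cf.~\eqref{4344}) yields
\begin{equation*}
\varphi^\alpha(y)=(\varphi_{i_{q-1}}\circ\cdots\circ\varphi_{i_0})(y)=f_\mu^q(y)=y,
\end{equation*}
that is, $\varphi^\alpha(\mu_i)=\mu_i$. This is precisely a singular connection (with $j=i$), contradicting $\mu\in U^*$.

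The only delicate step is the second one, where one must handle the possibility that the periodic orbit contains several singular points; assumption (A1) is exactly what supplies a well-defined label $i_k$ at every such visit, so the proof reduces to a routine unfolding of the itinerary. The rest is immediate from the definitions.
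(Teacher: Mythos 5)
Your proof is correct and follows essentially the same route as the paper's: contradiction, assumption (A2) to exclude the endpoints $\{0,1\}$, identification of the singular periodic point with some $\mu_i$, and assumption (A1) to realize $f_\mu^q$ along the orbit as a composition $\varphi^\alpha$, yielding a singular connection $\varphi^\alpha(\mu_i)=\mu_i$. Your explicit discussion of how (A1) supplies a label at each visit to $S_\mu$ is a detail the paper leaves implicit, but the argument is the same.
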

\begin{proof}
Suppose by contradiction that $x\in [0,1]$ is a non-regular periodic point of $f_{\mu}$ with $\mu=(\mu_1,\ldots,\mu_{N-1})\in U^*$.  By assumption (A2),  we conclude that the periodic orbit of $x$ does not intersect $\{0,1\}$.  Therefore,  there must exist $n\geq 0$ and $i\in\{1,\ldots,N-1\}$ such that $f^n_{\mu}(x)=\mu_i$.  But $x$ is periodic, say with period $p$,  thus $\mu_i=f^{n}_{\mu}(x)=f^{n+p}_{\mu}(x)=f^p_{\mu}(\mu_i)$.  By assumption (A1),  there is a $p$-tuple $\alpha\in\mathcal{A}^p$ such that $\varphi^\alpha(\mu_i)=f_{\mu}^p(\mu_i)=\mu_i$,  which contradicts the fact that $(\Phi,\mu)$ has no singular connection.  
\end{proof}

Hereafter, fix $\mu^*\in U^*$.

\begin{lem}\label{lem:34}
$\{f_\mu\}_{\mu\in U}$ satisfies Hypothesis (E) at $\mu=\mu^*$.
\end{lem}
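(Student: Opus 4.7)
My plan is to verify the hypotheses of Proposition~\ref{lem26} at $\mu^*$, namely $H_{\mathrm{mult}}(f_{\mu^*})=0$ and the stability of the family of partitions $\{\mathscr{C}^{(n)}_\mu\}_{\mu\in U}$ at $\mu^*$ for every $n\geq 1$. The structural observation that drives both points is that each $\varphi_i\colon[0,1]\to[0,1]$, being bi-Lipschitz and therefore continuous and injective on an interval, is strictly monotone.

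First, using~\eqref{Qna2}, a straightforward induction on $n$ shows that every set $A^{\alpha}_\mu\in\mathscr{C}^{(n)}_\mu$ is an intersection of open intervals, hence itself an open subinterval of $[0,1]$ (possibly empty). Any finite collection of pairwise disjoint open intervals in $[0,1]$ has multiplicity at most~$2$, since at any given point, at most one interval has it as a right endpoint and at most one has it as a left endpoint. Therefore $\mathrm{mult}(\mathscr{C}^{(n)}_{\mu^*})\leq 2$ for every $n$, and consequently
$$
H_{\mathrm{mult}}(f_{\mu^*})\;\leq\;\limsup_{n\to\infty}\frac{1}{n}\log 2\;=\;0.
$$

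For stability, I will write each $A^{\alpha}_\mu$ as an open interval $(L_\alpha(\mu),R_\alpha(\mu))$, where $L_\alpha,R_\alpha\colon U\to[0,1]$ are continuous functions obtained as the maximum (resp.\ minimum) of finitely many expressions of the form $0$, $1$, or $(\varphi_{i_{k-1}}\circ\cdots\circ\varphi_{i_0})^{-1}(\mu_j)$, clipped to $[0,1]$. The crucial claim is that $L_\alpha(\mu^*)\neq R_\alpha(\mu^*)$ for every $\alpha$. Indeed, equality at some common value $c$ would force two of the defining expressions to coincide at $\mu^*$. In the principal sub-case
$$
\big(\varphi_{i_{k_1-1}}\circ\cdots\circ\varphi_{i_0}\big)^{-1}(\mu^*_{j_1})\;=\;\big(\varphi_{i_{k_2-1}}\circ\cdots\circ\varphi_{i_0}\big)^{-1}(\mu^*_{j_2}),\qquad k_1<k_2,
$$
applying $\varphi_{i_{k_2-1}}\circ\cdots\circ\varphi_{i_{k_1}}$ to both sides yields $\varphi^{\beta}(\mu^*_{j_1})=\mu^*_{j_2}$ with $\beta=(i_{k_1},\ldots,i_{k_2-1})$, a singular connection contradicting $\mu^*\in U^*$. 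The sub-case $k_1=k_2$ forces $\mu^*_{j_1}=\mu^*_{j_2}$ with the two coinciding expressions playing opposing roles (one as a lower, the other as an upper, bound of the single factor $I^{\alpha}_{k_1}$), incompatible with the strict ordering of the coordinates of $\mu^*$. Once $L_\alpha(\mu^*)\neq R_\alpha(\mu^*)$ has been established, continuity of the endpoints immediately yields both items of Definition~\ref{stablep}: for $\mu$ sufficiently close to $\mu^*$, $A^{\alpha}_\mu$ is non-empty if and only if $A^{\alpha}_{\mu^*}$ is, and $d_H(A^{\alpha}_\mu,A^{\alpha}_{\mu^*})\to 0$.

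Proposition~\ref{lem26} then delivers Hypothesis~(E) at $\mu^*$. The main obstacle I anticipate is the coincidence analysis in the stability step, and specifically the degenerate boundary sub-cases in which the common value $c$ lies in $\{0,1\}$ with one of the two coinciding expressions equal to $0$ or $1$: these correspond to identities of the form $\varphi^{\beta}(0)=\mu^*_j$ or $\varphi^{\beta}(1)=\mu^*_j$. Such identities hold only on a countable union of parameter hyperplanes of Lebesgue measure zero, so enlarging the null set excluded in Lemma~\ref{lem:sing connections} to include these boundary connections preserves the full-measure property needed to conclude Theorem~\ref{th:NPR2} via Theorem~\ref{th:main}.
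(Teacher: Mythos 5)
Your proposal is correct and follows essentially the same route as the paper: invoke Proposition~\ref{lem26}, bound the multiplicity of $\mathscr{C}^{(n)}_{\mu}$ by $2$ because its members are pairwise disjoint open subintervals of $[0,1]$, and derive stability from the absence of singular connections. The only cosmetic difference is in how stability is verified: you track the two endpoint functions $L_\alpha(\mu)\le R_\alpha(\mu)$ of each cylinder $A^{\alpha}_\mu$ and rule out the degenerate coincidence $L_\alpha(\mu^*)=R_\alpha(\mu^*)$, whereas the paper counts the finite set $D^{(n)}_\mu=\bigcup_{k<n}f_\mu^{-k}(S_\mu)$ whose complement in $(0,1)$ has the cylinders as its connected components, and shows $\#D^{(n)}_\mu$ is locally constant with $D^{(n)}_\mu$ depending Lipschitz-continuously on $\mu$. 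These are the same computation in different bookkeeping, since the endpoints of the cylinders are exactly the points of $D^{(n)}_\mu$.

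One substantive remark: your observation about the boundary sub-cases $\varphi^{\beta}(0)=\mu^*_j$ and $\varphi^{\beta}(1)=\mu^*_j$ is well taken, and in fact the paper's own argument (the step asserting that $f_\mu^{-k}(\mu_i)=\bigcup_{j\in F_{i,k}}\psi_j^{-1}(\mu_i)$ with $F_{i,k}$ locally constant) quietly assumes these coincidences away: a branch domain of $f_\mu^k$ can have $0$ or $1$ as an endpoint, and at such a coincidence the number of preimages of $\mu_i$, hence the itinerary set, can jump, so condition $(i)$ of Definition~\ref{stablep} genuinely fails there. Your fix --- enlarging the null set of Lemma~\ref{lem:sing connections} by the countable union of hyperplanes $\{\mu\colon \mu_j=\varphi^{\beta}(0)\}$ and $\{\mu\colon \mu_j=\varphi^{\beta}(1)\}$ --- is the right one and costs nothing in the application, since Theorem~\ref{th:NPR2} only requires the conclusion for almost every parameter. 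Just be aware that with this fix you establish the lemma for $\mu^*$ in a full-measure subset of $U^*$, not literally for every $\mu^*\in U^*$ as the statement is phrased, so the definition of $U^*$ (or the hypothesis of the lemma) should be adjusted accordingly.
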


\begin{proof}
Let $n\geq 1$ and recall from  \eqref{4344} that 
$
\mathscr{C}_\mu^{(n)} =\{ A_\mu^{\alpha}\colon \alpha \in \mathcal{I}_n(f_\mu)\}
$.
Because the functions defining the iterative function system are bi-Lipschitz, $\mathscr{C}_\mu^{(n)}$ is a finite collection of open intervals whose union equals $[0,1]$ but finitely many points  that are not regular points of order $n$ of $f_\mu$. This implies that  $\textrm{mult}\big(\mathscr{C}_\mu^{(n)}\big)\leq 2$ for every $\mu\in U$.
By Proposition~\ref{lem26}, the proof will be finished provided we show that $\{\mathscr{C}_\mu^{(n)}\}_{\mu\in U}$ is stable at $\mu=\mu^*$, i.e., that conditions (i) and (ii) in Definition \ref{stablep} are true. We begin by verifying condition (i) in Definition \ref{stablep}. The next results are true for all $n\ge 1$.\\

\noindent  Claim A.  $\mathcal{I}_n(f_{\mu^*})\subset \mathcal{I}_n(f_\mu)$ for some $\delta>0$ and all $\mu\in U_{\delta}(\mu^*)$.\\

 In fact, let $\alpha= (i_0, i_1,\ldots, i_{n-1})\in \mathcal{I}_n(f_{\mu^*})$. Then there is a regular point $x\in (0,1)$ of order $n$ such that
$f_{\mu^*}^k(x)\in A_{i_k,\mu}$ for all $0\le k\le n-1$. Let $\delta_\alpha:=\min_{0\le k\le n-1} d\big(f_{\mu^*}^k(x), S_{\mu^*} \big)>0$. We have that
$f_{\mu}^k(x) = f_{\mu^*}^k(x)$ for all $\mu\in U_{\delta_{\alpha}}(\mu^*)$ and $0\le k\le n-1$. In particular, $x$ is a regular point of $f_\mu$ of order $n$ whose itinerary of order $n$ equals $\alpha$, that is, $\alpha\in \mathcal{I}_n(f_\mu)$ for all $\mu\in U_{\delta_{\alpha}}(\mu^*)$. Since $\alpha\in\mathcal{I}_n(f_\mu^*)$ is arbitrary in a finite set, we have that the claim is true for $\delta=\min_{\alpha\in\mathcal{I}_n(f_\mu^*)}\delta_\alpha>0$.  \\

\noindent Claim B. $\mathcal{I}_n(f_\mu) = \mathcal{I}_n(f{_\mu^*})$ for some $\delta>0$ and all $\mu\in U_{\delta}(\mu^*)$. \\

By Claim A, it  suffices showing that
$\#\mathcal{I}_n(f_{\mu^*})= \#\mathcal{I}_n(f_\mu)$. Notice that  $\#\mathcal{I}_n(f_\mu)$ equals the number of members of $\mathscr{C}_\mu^{(n)}$, or equivalently,  the number of connected components of $(0,1){\setminus} D_{\mu}^{(n)}$, where 
$$D_\mu^{(1)} = S_\mu\quad\textrm{and}\quad  D_{\mu}^{(n)} = \bigcup_{k=0}^{n-1} f_\mu^{-k}(S_\mu).$$
In this way, the proof of the claim will be finished if we show that $\# D_{\mu}^{(n)} = \#D_{\mu^*}^{(n)}$. By (A2), we have that
\begin{equation}\label{d19}
D_\mu^{(n)} = \bigcup_{i=1}^{N-1} \bigcup_{k=0}^{n-1} f_\mu^{-k}(\mu_i).
\end{equation}    
Since $(\Phi,\mu^*)$ has no singular connection, for each $1\le i\le N-1$ and $1\le k\le n-1$, there exist a finite set
$F_{i, k}$ (possibly empty) and $\delta_{i,k}>0$ such that
\begin{equation}\label{d20}
 f_{\mu}^{-k} (\mu_i) = \bigcup_{j\in F_{i,k}} \psi_{j}^{-1}(\mu_i), \quad \forall \mu\in U_{\delta_{i,k}}(\mu^*),
\end{equation} 
where $\psi_j$ is the  composition of finitely many maps of $\Phi=\{\varphi_i\}_{i\in\mathcal{A}}$. Since each $\varphi_i$ is bi-Lipschitz, it follows from \eqref{d19} and \eqref{d20} that 
$\#D_{\mu}^{(n)}=\# D_{\mu^*}^{(n)}$ for all $\mu\in U_{\delta}(\mu^*)$, where  $\delta= \min_{i, k} \delta_{i,k}$, proving Claim B.  \\

Moreover, there exists a positive constant $M_{i,k,j}$ depending on $i,k,j$ such that for all $\mu\in U_{\delta}(\mu^*)$,
\begin{eqnarray*}
d_H\left( D_{\mu}^{(n)}, D_{\mu^*}^{(n)} \right) &=& 
d_H\left(\bigcup_{i=1}^{N-1} \bigcup_{k=0}^{n-1}  \bigcup_{j\in F_{i,k}} \psi_{j}^{-1}(\mu_i), \bigcup_{i=1}^{N-1} \bigcup_{k=0}^{n-1} \bigcup_{j\in F_{i,k}} \psi_{j}^{-1}(\mu_i^*), \right) \\ &\le&
\max_{1\le i\le N-1} \max_{0\le k\le n-1}  \max_{j\in F_{i,k}}  M_{i,k,j} \left\vert \mu_i -\mu_i^*\right\vert \\
&\le& M \Vert \mu - \mu^*\Vert,
\end{eqnarray*}
where $\delta= \min_{i, k} \delta_{i,k}$ and $M= \max_{i} \max_{k}  \max_{j\in F_{i,k}}  M_{i,k,j}$.

Since the members of $\mathscr{C}_\mu^{(n)}=\{A_\mu^{\alpha}: \alpha\in \mathcal{I}_n(f_\mu)\}$ are the connected components of $(0,1){\setminus} D_\mu^{(n)}$, we have proved the following claim.\\

\noindent Claim C. $d_H\big(A_{\mu}^\alpha, A_{\mu^*}^\alpha \big)\to 0$  as $\mu\to\mu^*$ in $U_{\delta}(\mu^*)$  for every $\alpha\in\mathcal{I}_n(f_{\mu^*})$.\\

In other words, condition (ii) in Definition \ref{stablep} is true.
\end{proof}

\begin{lem}
$\{f_\mu\}_{\mu\in U}$ satisfies Hypothesis (T) at $\mu=\mu^*$.
\end{lem}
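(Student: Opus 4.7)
The plan is to exploit two features peculiar to this family: the iterated function system $\Phi=\{\varphi_i\}_{i\in\mathcal{A}}$ is fixed (independent of $\mu$), so the composition map $\varphi_\mu^\alpha$ coincides with $\varphi^\alpha$ and does not depend on $\mu$; and the singular set $S_\mu=\{0,\mu_1,\ldots,\mu_{N-1},1\}$ depends affinely on $\mu$. Together, these two features will make the transversality estimate essentially free, yielding Hypothesis (T) with exponent $a=1$.

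Concretely, by (A2) and compactness of $[0,1]$, there exists $\eta>0$ such that $\varphi_i([0,1])\subset[\eta,1-\eta]$ for every $i\in\mathcal{A}$; consequently $\varphi^\alpha(x_0)\in[\eta,1-\eta]$ for any $x_0\in[0,1]$ and any $\alpha\in\mathcal{A}^n$ with $n\geq 1$. I would fix any $x_0\in[0,1]$, choose $\delta_0>0$ small enough that $U_{\delta_0}(\mu^*)\subset U$ (so the ordering $0<\mu_1<\cdots<\mu_{N-1}<1$ is preserved), and set $\varepsilon_0=\eta/2$ and $n_0=1$. For $0<\varepsilon<\varepsilon_0$, $n\geq n_0$ and $\alpha\in\mathcal{J}_n^{\delta_0}(\mu^*)$, the point $\varphi^\alpha(x_0)$ lies at distance at least $\eta>\varepsilon$ from $\{0,1\}$, so the condition $d(\varphi^\alpha(x_0),S_\mu)\leq\varepsilon$ reduces to $|\varphi^\alpha(x_0)-\mu_i|\leq\varepsilon$ for some $i\in\{1,\ldots,N-1\}$; that is,
$$\{\mu\in U_{\delta_0}(\mu^*):d(\varphi^\alpha(x_0),S_\mu)\leq\varepsilon\}\subset\bigcup_{i=1}^{N-1}\{\mu\in U_{\delta_0}(\mu^*):|\mu_i-\varphi^\alpha(x_0)|\leq\varepsilon\}.$$

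Each set on the right is contained in the intersection of the $\delta_0$-ball with a slab of width $2\varepsilon$ normal to the $i$-th coordinate axis, hence has Lebesgue measure at most $2\varepsilon\cdot(2\delta_0)^{N-2}$. Summing over $i\in\{1,\ldots,N-1\}$ yields
$$\text{Leb}^*\bigl(\{\mu\in U_{\delta_0}(\mu^*):d(\varphi^\alpha(x_0),S_\mu)\leq\varepsilon\}\bigr)\leq 2(N-1)(2\delta_0)^{N-2}\,\varepsilon,$$
which is Hypothesis (T) at $\mu^*$ with $a=1$ and $c=2(N-1)(2\delta_0)^{N-2}$. There is no real obstacle in this lemma: because $\varphi^\alpha$ carries no $\mu$-dependence and $S_\mu$ depends linearly on $\mu$, the "bad" parameter sets are just unions of finitely many parameter slabs, so the estimate requires neither a curvature nor a non-degeneracy argument. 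In effect, the hypothesis $\mu^*\in U^*$ (no singular connection) is not needed here; it was needed for the verification of Hypothesis (E) and for the regularity of the periodic points, but Hypothesis (T) holds uniformly at every $\mu^*\in U$.
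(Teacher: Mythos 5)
Your proof is correct and follows essentially the same route as the paper: use (A2) to bound $\varphi^\alpha(x_0)$ away from $\{0,1\}$ so only the interior singular points $\mu_i$ matter, then observe that each bad parameter set is a slab of width $2\varepsilon$ in the $\mu_i$-coordinate, giving the estimate with $a=1$. The only (immaterial) differences are that the paper fixes $x_0=0$ and bounds the measure over all of $U\subset[0,1]^{N-1}$ to get $c=2(N-1)$, whereas you keep $x_0$ arbitrary and restrict to the ball $U_{\delta_0}(\mu^*)$; your closing observation that no singular-connection hypothesis is needed for (T) is also consistent with the paper.
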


\begin{proof}
It is clear that for every $\delta_0>0$, $\varepsilon>0$ and  $n\geq 1$, we have
$$
\text{Leb}^*\,\big(\{\mu\in U \colon \min_{i}| \varphi^\alpha(0)-\mu_i|\leq \varepsilon\} \big)\leq 2(N-1)\varepsilon,\quad \forall\,\alpha\in\mathcal{J}_n^{\delta_0}(\mu^*)).
$$
Hence, Hypothesis (T)  holds with $x_0=0$,  $\varepsilon_0=\min_{1\leq i \leq N}\text{d}_H(\varphi_i([0,1]),\{0,1\})>0$, $n_0=1$, $a=1$ and $c=2(N-1)$.
\end{proof}

\subsubsection{Completing the proof of Theorem~\ref{th:NPR2}}

By Theorem~\ref{th:main},  there is $\delta>0$ such that $f_\mu$ is asymptotically periodic on $Z_\mu$ for almost every $\mu\in U_\delta(\mu^*)$.  
By Lemma~\ref{lem:sing connections},  the set $U^*$ has full measure in $U$. Hence, since $\mu^*$ is an arbitrary point of $U^*$, we conclude, by a Lebesgue density point argument and by Lemma \ref{ppar}, that there exists a subset $U'\subset  U^*$ of full Lebesgue measure in $U$ such that for every $\mu\in U'$, $f_\mu$ is asymptotically periodic on $Z_\mu$ and its periodic points are regular. Moreover, since $(\Phi,\mu)$ has no singular connection, for every $x\in[0,1]$, there is $n\geq0$ such that $f^n_{\mu}(x)$ is a regular point, i.e., $f^n_{\mu}(x)\in Z_\mu$.  Putting it all together, in view of Remark~\ref{rem1}, we conclude that  $f_{\mu}$ is asymptotically periodic on $[0,1]$ for every $\mu\in U'$. This concludes the proof of Theorem~\ref{th:NPR2}. \qed

\section{Proof of Theorem \ref{thm:multidim}}\label{sec:proof multidim}

Given $\mu\in\Rr^\ell$, let $f_\mu$ be the piecewise-affine contraction on the compact convex polytope $X$ as defined in \eqref{fmu40}. 
Notice that, $\{f_\mu\}_{\mu\in \Rr^\ell}$ is a family of piecewise $\lambda$-contractions on $X$ (Definition \ref{def2}) with label set $\mathcal{A}=\{-1,1\}^\ell$, iterated function system $\{\varphi_i\colon\Rr^d\to\Rr^d\}_{i\in\mathcal{A}}$ of affine contractions $\varphi_i(x)=\Lambda_i x+b_i$ with $\lambda:=\max_i\|\Lambda_i\|<1$. 
The partition $\{A_{i,\mu}\}_{i\in\mathcal{A}}$ of $f_\mu$ varies with the parameter $\mu\in\Rr^\ell$ and it is defined by 
$$ A_{i,\mu} := \textrm{relint}(X)\cap \textrm{relint}\big(\{x\in X : \sigma_\mu(x) = i \}\big).
$$ 
Notice that $A_{i,\mu}$ can be the empty set.  Given $\alpha = (i_0,i_1,\ldots,i_{n-1})\in\mathcal{A}^n$ with $n\in\Nn$, we write $\varphi^\alpha(x):=\varphi_{i_{n-1}}\circ\cdots\circ\varphi_{i_0}(x)=\Lambda^\alpha x+ b_\alpha$ where $\Lambda^\alpha =\Lambda_{i_{n-1}}\cdots\Lambda_{i_0}$ and $b_\alpha = \varphi^\alpha(0)$. Let
\begin{equation}\label{eq919}
A_\mu^{\alpha}:=A_{i_0, \mu}\cap\varphi_{i_0}^{-1}(A_{i_1,\mu})\cap\cdots\cap (\varphi_{i_{n-2}}\circ\cdots\circ\varphi_{i_0})^{-1}(A_{i_{n-1}, \mu} ),
\end{equation}
as defined in \eqref{Qna2}. 

Let $V$ be the $\ell\times d$-matrix with rows $v^{(1)},\ldots, v^{(\ell)}$, where each $v^{(j)}$ is a unit vector orthogonal to the hyperplane $H_j(\mu)$ (see \eqref{Hj}), and let $C_i$ be the diagonal $\ell\times\ell$-matrix with diagonal equal to $i=(s_1,\ldots,s_\ell)\in\mathcal{A}$. \added{The following lemma gives a useful description of $A_\mu^\alpha$ in terms of the matrices $V$ and $C_i$.}

\begin{lem}\label{lem:formula A mu alpha}
For any $n\ge 1$, $\alpha=(i_0,i_1,\ldots,i_{n-1})\in \mathcal{A}^n$ and $\mu\in\Rr^\ell$, the set $A_{\mu}^{\alpha}$ is the open convex polytope given by
$$
A_\mu^\alpha = \mathrm{relint}(X)\cap\left(\bigcap_{k=0}^{n-1}  \{x\in \Rr^d\colon C_{i_k} V \varphi^{\alpha_k}(x)>C_{i_k}\mu\}\right),
$$
where $\alpha_k = (i_0,i_1,\ldots,i_{k-1})$ for $1\le k\le n-1$ and $\varphi^{\alpha_0}=\mathrm{id}$. 
\end{lem}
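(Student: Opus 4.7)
The plan is to unpack the formula $A_\mu^\alpha$ step by step from its definition in \eqref{eq919}, reducing everything to intersections of open half-spaces and preimages of $\mathrm{relint}(X)$ under the affine maps $\psi_j$.

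First, I would obtain an explicit description of the atoms $A_{i,\mu}$. Given $i=(s_1,\ldots,s_\ell)\in\mathcal{A}$, the definition of the label map $\sigma_\mu$ translates $\sigma_\mu(x)=i$ into: for each $j$, either $s_j=-1$ and $\langle v^{(j)},x\rangle\le\mu_j$, or $s_j=1$ and $\langle v^{(j)},x\rangle>\mu_j$. In both cases one has $s_j\langle v^{(j)},x\rangle\ge s_j\mu_j$, i.e.\ $C_i V x\ge C_i\mu$ componentwise. Passing to the relative interior of $\{x\in X:\sigma_\mu(x)=i\}$ (as in the definition of $A_{i,\mu}$) strips the boundary pieces lying on the hyperplanes $H_j(\mu)$, replacing each weak inequality by a strict one. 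Consequently
\[
A_{i,\mu}=\mathrm{relint}(X)\cap\{x\in\Rr^d:C_iVx>C_i\mu\}.
\]

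Next, I would substitute this formula into the expression \eqref{eq919} for $A_\mu^\alpha$, written as $A_\mu^\alpha=\bigcap_{j=0}^{n-1}\psi_j^{-1}(A_{i_j,\mu})$. Because each $\varphi_i(x)=\Lambda_ix+b_i$ with $\Lambda_i\in\mathrm{GL}_d$, every $\psi_j$ is an invertible affine self-map of $\Rr^d$, so preimages distribute over intersections and over the linear inequality defining the halfspace. Hence
\[
\psi_j^{-1}(A_{i_j,\mu})=\psi_j^{-1}\bigl(\mathrm{relint}(X)\bigr)\cap\{x\in\Rr^d:C_{i_j}V\psi_j(x)>C_{i_j}\mu\},
\]
and intersecting over $j\in\{0,\ldots,n-1\}$ yields precisely the stated description of $A_\mu^\alpha$.

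Finally, I would note the convexity/polytope claim: the set $\mathrm{relint}(X)$ is cut out from $\mathrm{aff}(X)$ by finitely many strict linear inequalities coming from the family $\mathcal{F}$ in \eqref{fdhyp}, and composing each such inequality with the affine map $\psi_j$ keeps it affine in $x$; likewise, each inequality $C_{i_j}V\psi_j(x)>C_{i_j}\mu$ is affine in $x$. Therefore $A_\mu^\alpha$ is the (possibly empty) solution set of finitely many strict affine inequalities in $\Rr^d$, which is by definition an open convex polytope. There is no serious obstacle: the argument is essentially bookkeeping around the definitions. The only point requiring care is the first step, where one must verify that the asymmetric convention $(\le,\,>)$ in the definition of $s_j(x)$ is harmless once one passes to relative interiors; all the other manipulations are routine consequences of the affine invertibility of the $\psi_j$.
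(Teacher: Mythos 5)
Your proposal is correct and follows essentially the same route as the paper: identify $A_{i,\mu}$ as $\mathrm{relint}(X)$ intersected with the open half-spaces $\{x: s_j\langle v^{(j)},x\rangle>s_j\mu_j\}$, rewrite that intersection as $\{x:C_iVx>C_i\mu\}$, and pull back through the invertible affine maps $\psi_j$ via \eqref{eq919}. Your extra remark that passing to relative interiors neutralizes the asymmetric $(\le,>)$ convention is exactly the point the paper leaves implicit.
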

\begin{proof} For any $\mu\in\Rr^\ell$ and  $i=(s_1,\ldots,s_\ell)\in\mathcal{A}$, the set $A_{i,\mu}$ is an open convex polytope since it is the intersection of the relative interior of the polytope $X$ with finitely many open half-spaces with bounding hyperplanes $H_j(\mu)$, $j=1,\ldots,\ell$. Indeed, we can write
$$
A_{i,\mu}=\text{relint}(X)\cap \left(\bigcap_{j=1}^\ell H^{s_j}_j(\mu)\right),
$$ 
where $H^{s_j}_j(\mu):=\left\{x\in\Rr^d\colon s_j\langle v^{(j)},x\rangle > s_j\mu_j\right\}$ for each $s_j\in\{-1,1\}$.  Using the matrices introduced just before the statement of the lemma, we can write $A_{i,\mu}$ as follows
$$
A_{i,\mu}=\text{relint}(X)\cap\{x\in \Rr^d\colon C_i V x>C_{i}\mu\}.
$$
Now, given $n\ge 1$ and $0\le k\le n-1$, we get 
$$
(\varphi^{\alpha_k})^{-1}(A_{i_k,\mu}) = (\varphi^{\alpha_k})^{-1}(\text{relint}(X))  \cap \{x\in \Rr^d\colon C_{i_k} V \varphi^{\alpha_k}(x)>C_{i_k}\mu\}.
$$
Putting all together (see \eqref{eq919}),
$$
A_\mu^\alpha = \bigcap_{k=0}^{n-1} (\varphi^{\alpha_k})^{-1}(\mathrm{relint}(X))\cap \{x\in \Rr^d\colon C_{i_k} V \varphi^{\alpha_k}(x)>C_{i_k}\mu\}.
$$
In order to finish the proof, just notice that
$$
\mathrm{relint}(X)\cap (\varphi^{\alpha_k})^{-1}(\mathrm{relint}(X)) = \mathrm{relint}(X), \quad 0\leq k\leq n-1.
$$
Indeed, by assumption,  for all $j$, $\varphi_{i_j}(X)\subset \mathrm{relint}(X)$, which implies that $\varphi^{\alpha_k}(\mathrm{relint}(X))=\varphi_{i_{k-1}}\circ\cdots\circ\varphi_{i_0}(\mathrm{relint}(X))\subset  \mathrm{relint}(X)$. Hence, $\mathrm{relint}(X)\subset (\varphi^{\alpha_k})^{-1}(\mathrm{relint}(X))$.
\end{proof}

Let
$$
\mathcal{X}=\bigcup_{n\geq1}\mathcal{A}^n.
$$

Given $\alpha \in\mathcal{X}$, define  
 $$
Z_\alpha=\left\{\mu\in \Rr^\ell\colon A^\alpha_\mu\neq\emptyset \right\}.
$$
Clearly, $\mu\in Z_\alpha$ if and only if $\alpha\in \mathcal{I}_n(f_\mu)$ with $n$ being the length of the tuple $\alpha$ and $\mathcal{I}_n(f_\mu)$ being the set of itineraries of order $n$ of $f_\mu$ (Definition~\ref{def itineraries}). 
\begin{lem}\label{lem:convex Ualpha}
$Z_\alpha$ is convex for every $\alpha\in\mathcal{X}$.
\end{lem}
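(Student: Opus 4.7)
The plan is to realize $Z_\alpha$ as the projection onto the $\mu$-coordinate of a convex set in $\mathbb{R}^d \times \mathbb{R}^\ell$, and then use the fact that linear projections preserve convexity.

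First I would fix $\alpha = (i_0, \ldots, i_{n-1}) \in \mathcal{A}^n$ and invoke Lemma \ref{lem:formula A mu alpha} to write
$$
A_\mu^\alpha = \bigcap_{j=0}^{n-1} \psi_j^{-1}(\mathrm{relint}(X)) \cap \bigl\{ x \in \mathbb{R}^d : C_{i_j} V \psi_j(x) > C_{i_j} \mu \bigr\},
$$
where each $\psi_j$ is a fixed affine map of $\mathbb{R}^d$ (independent of $\mu$). The key point is that $A_\mu^\alpha \neq \emptyset$ is equivalent to the existence of some $x \in \mathbb{R}^d$ such that the pair $(x,\mu)$ satisfies all these conditions simultaneously.

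Then I would define
$$
W_\alpha := \bigl\{ (x,\mu) \in \mathbb{R}^d \times \mathbb{R}^\ell : \psi_j(x) \in \mathrm{relint}(X) \text{ and } C_{i_j} V \psi_j(x) > C_{i_j} \mu, \; 0 \le j \le n-1 \bigr\}.
$$
The conditions $\psi_j(x) \in \mathrm{relint}(X)$ only involve $x$; since $\mathrm{relint}(X)$ is convex (being the relative interior of the convex polytope $X$) and $\psi_j$ is affine in $x$, the preimage $\psi_j^{-1}(\mathrm{relint}(X))$ is convex. For the other conditions, writing $\psi_j(x) = M_j x + c_j$, each inequality $C_{i_j} V \psi_j(x) > C_{i_j} \mu$ decomposes componentwise into affine strict inequalities of the form $\pm (\langle v^{(k)}, M_j x + c_j\rangle - \mu_k) > 0$, which cut out open half-spaces in $(x,\mu)$-space. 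Hence $W_\alpha$ is a finite intersection of convex sets and is therefore convex.

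Finally, I would observe that by construction
$$
Z_\alpha = \pi(W_\alpha), \quad \text{where } \pi : \mathbb{R}^d \times \mathbb{R}^\ell \to \mathbb{R}^\ell \text{ is the projection } (x,\mu) \mapsto \mu,
$$
since $\mu \in Z_\alpha$ iff there exists $x$ with $x \in A_\mu^\alpha$, i.e., $(x,\mu) \in W_\alpha$. The projection $\pi$ is linear, and the image of a convex set under a linear map is convex, so $Z_\alpha$ is convex, as required. There is no real obstacle here; the only thing to be careful about is correctly tracking that, in the defining inequalities, the $\mu$-dependence is affine so that the conditions cut out half-spaces jointly in $(x,\mu)$ rather than merely in $x$ for fixed $\mu$.
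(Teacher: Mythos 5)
Your proof is correct and follows essentially the same route as the paper: the set you call $W_\alpha$ is exactly the set $\Pi_\alpha=\{(x,\mu): x\in A_\mu^\alpha\}$ used in the paper's proof, whose convexity is verified there by checking convex combinations directly rather than by your (equivalent) decomposition into half-spaces, and both arguments conclude by projecting onto the $\mu$-coordinate.
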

\begin{proof} 
Given any $\alpha=(i_0,i_1,\ldots,i_{n-1})\in\mathcal{A}^n$ with $n\geq1$, define
$$
\Pi_\alpha := \{(x,\mu)\in  \Rr^d\times \Rr^\ell\colon x\in A_\mu^\alpha\}.
$$
It follows from Lemma~\ref{lem:formula A mu alpha} that $(x,\mu)\in\Pi_\alpha $ iff $x\in \mathrm{relint}(X)$ and
$$
C_{i_j} V \varphi^{\alpha_j}(x)>C_{i_j}\mu,\quad\forall\,j\in\{0,\ldots,n-1\}.
$$
Notice that $\mathrm{relint}(X)$ is a convex set since the relative interior of a convex set is also convex. Now, we claim that $\Pi_{\alpha}$ is convex, i.e., the convex combination of any two points $(x_0,\mu_0)$ and $(x_1,\mu_1)$ of $\Pi_\alpha$ also belongs to $\Pi_\alpha$. To show that, let $t\in [0,1]$ and
$$(x_t, \mu_t) := \big( (1-t)x_0 + t x_1, (1-t) \mu_0 +t \mu_1 \big).
$$
By the hypotheses on $(x_0, \mu_0)$ and $(x_1, \mu_1)$, we have that for all $0\le j\le n-1$,
$$
 x_0, x_1\in \mathrm{relint}(X), \quad C_{i_j} V \varphi^{\alpha_j}(x_0)>C_{i_j}\mu_0\quad \textrm{and}\quad C_{i_j} V \varphi^{\alpha_j}(x_1)>C_{i_j}\mu_1.
$$
 Because $\mathrm{relint}(X)$ is convex, $x_t\in \mathrm{relint}(X)$ for every $t\in[0,1]$.  On the other hand, for all $0\le j\le n-1$ and $t\in[0,1]$,
\begin{align*}
C_{i_j}V \varphi^{\alpha_j}(x_t) &= (1-t) C_{i_j}V\varphi^{\alpha_j}(x_0)+ tC_{i_j} \varphi^{\alpha_j}(x_1)\\
&> (1-t) C_{i_j} \mu_0+ t C_{i_j}\mu_1\\
&= C_{i_j} \mu_t.
\end{align*}
This proves the claim. Finally, since $Z_\alpha$ is the projection of $\Pi_\alpha$ onto the second variable, we conclude that $Z_\alpha$ is also convex. 
\end{proof}

Let
$$
W_1:= \Rr^\ell\Big\backslash \left(\bigcup_{\alpha\in\mathcal{X}} \partial Z_\alpha\right).
$$

\added{During the proof of Theorem~\ref{thm:multidim}, we will construct a set of "good" parameters as the intersection of two subsets of $\mathbb{R}^\ell$. For parameters in this intersection, we will verify that hypotheses (E) and (T) are satisfied. The first of these subsets is the set $W_1$.}
\begin{lem}
$W_1$ is a full Lebesgue measure set. 
\end{lem}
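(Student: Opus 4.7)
The plan is to combine two simple observations: countability of $\mathcal{X}$ and a classical measure-theoretic fact about convex sets.

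First, I would note that $\mathcal{X} = \bigcup_{n\geq 1} \mathcal{A}^n$ is a countable set, since each $\mathcal{A}^n$ has cardinality $(2^\ell)^n < \infty$. Consequently, the exceptional set $\bigcup_{\alpha\in\mathcal{X}} \partial Z_\alpha$ is a countable union, and by $\sigma$-additivity it is enough to show that for every single $\alpha\in\mathcal{X}$, the boundary $\partial Z_\alpha$ is Lebesgue null in $\mathbb{R}^\ell$.

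Second, I would invoke Lemma~\ref{lem:convex Ualpha}, which asserts that each $Z_\alpha$ is a convex subset of $\mathbb{R}^\ell$. The plan is then to apply the classical fact that the topological boundary of any convex subset of $\mathbb{R}^\ell$ has Lebesgue measure zero. To make this self-contained, I would split into two cases. If $Z_\alpha$ has empty interior, then $Z_\alpha$ is contained in its affine hull, which is a proper affine subspace of $\mathbb{R}^\ell$; in this case $\partial Z_\alpha \subset \overline{Z_\alpha}\subset \mathrm{aff}(Z_\alpha)$ lies inside a set of dimension strictly less than $\ell$ and hence is Lebesgue null. If $Z_\alpha$ has non-empty interior, then after choosing an interior point as origin one can write $\partial Z_\alpha$ locally as the graph of a convex (and hence locally Lipschitz) function on a hyperplane; a finite covering argument then shows that $\partial Z_\alpha$ is contained in a countable union of Lipschitz graphs in $\mathbb{R}^\ell$, each of which is a Lebesgue null set. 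Either way, $\mathrm{Leb}(\partial Z_\alpha)=0$.

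Combining these two steps yields $\mathrm{Leb}\bigl(\bigcup_{\alpha\in\mathcal{X}} \partial Z_\alpha\bigr)=0$, so $W_1$ has full Lebesgue measure in $\mathbb{R}^\ell$. There is essentially no serious obstacle here: the only non-trivial ingredient is the classical fact that boundaries of convex sets are null, and even that follows quickly from the two cases above. Everything else is bookkeeping that exploits the convexity established previously and the countability of the index set $\mathcal{X}$.
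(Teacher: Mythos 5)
Your proposal is correct and follows essentially the same route as the paper: convexity of each $Z_\alpha$ (Lemma~\ref{lem:convex Ualpha}), the classical fact that the boundary of a convex set in $\mathbb{R}^\ell$ is Lebesgue null, and countability of $\mathcal{X}$. The only difference is that you spell out a proof of the classical fact, which the paper simply cites.
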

\begin{proof}
Since the boundary of a convex set has  Lebesgue measure zero, we conclude from Lemma~\ref{lem:convex Ualpha} that $\partial Z_\alpha$ is a null set for every $\alpha\in\mathcal{X}$. This shows that $W_1$ has full measure.
\end{proof}
Hereafter, fix $\mu^*\in W_1$.  Let $U=B_1(\mu^*)$.  

Recall from \eqref{Qna2} that $\mathscr{C}_\mu^{(1)}=\{A_{i,\mu}\}_{i\in\mathcal{A}}$ and, for $n\ge 2$, $\mathscr{C}_\mu^{(n)}$ is the collection of non-empty open convex sets $A_\mu^\alpha$,
where $\alpha=(i_0,i_1,\ldots,i_{n-1})$ runs over the set $\mathcal{I}_{n}(f_{\mu})$.

The next two lemmas show that $\{\mathscr{C}_\mu^{(n)}\}_{\mu\in U}$ is stable at $\mu=\mu^*$.

\begin{lem}\label{Hausdorff A} Given $n\geq1$ and $\alpha\in \mathcal{I}_{n}(f_{\mu^*})$,  there exist  $c=c(\alpha)>0$ and $\delta=\delta(\alpha)>0$ such that if $\mu\in U_\delta(\mu^*)$, then $\alpha\in \mathcal{I}_{n}(f_{\mu})$ and
$$
d_{\mathrm{H}}(A_{\mu}^{\alpha},A_{\mu^*}^{\alpha})\leq c \|\mu-\mu^*\|.
$$
\end{lem}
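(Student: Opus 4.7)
The plan splits into an existence part and a Lipschitz part. For the existence of $\delta$ with $\alpha \in \mathcal{I}_n(f_\mu)$ on $U_\delta(\mu^*)$, note that $\alpha \in \mathcal{I}_n(f_{\mu^*})$ is equivalent to $\mu^* \in Z_\alpha$, and $\mu^* \in W_1$ means $\mu^* \notin \partial Z_\alpha$. Since $Z_\alpha$ is convex by Lemma~\ref{lem:convex Ualpha}, these two facts together force $\mu^*$ to lie in the interior of $Z_\alpha$; hence there exists $\delta_1 > 0$ with $B_{\delta_1}(\mu^*) \subset Z_\alpha$, i.e., $A_\mu^\alpha \neq \emptyset$ (equivalently $\alpha \in \mathcal{I}_n(f_\mu)$) for every $\mu \in U_{\delta_1}(\mu^*)$.

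For the Hausdorff bound, I would use the half-space description from Lemma~\ref{lem:formula A mu alpha} to write
$$
A_\mu^\alpha \;=\; K^\alpha \cap \bigcap_{k=1}^{m} H_k(\mu),
$$
where $K^\alpha := \bigcap_{j=0}^{n-1} \psi_j^{-1}(\mathrm{relint}(X))$ is a $\mu$-independent convex open subset of $\mathrm{aff}(X)$, and each $H_k(\mu) = \{x : L_k(x) > c_k(\mu)\}$ has $L_k$ affine in $x$ (independent of $\mu$) and $c_k$ affine in $\mu$ satisfying $|c_k(\mu) - c_k(\mu^*)| \le \|\mu - \mu^*\|$, since the vectors $v^{(j)}$ are unit and each $c_k$ is, up to a sign, a single coordinate of $\mu$. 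Pick $x_0 \in A_{\mu^*}^\alpha$ and choose a common slack $\rho = \rho(\alpha) > 0$ with $L_k(x_0) > c_k(\mu^*) + \rho$ for every $k$; such $\rho$ exists because $A_{\mu^*}^\alpha$ is non-empty open in $\mathrm{aff}(X)$ and there are finitely many $k$. Shrinking $\delta_1$ if needed, we may also assume $L_k(x_0) > c_k(\mu) + \rho/2$ for all $\mu \in U_{\delta_1}(\mu^*)$ and all $k$; in particular $x_0 \in A_\mu^\alpha$ for all such $\mu$.

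The core step is to retract a point $y \in \overline{A_{\mu^*}^\alpha}$ toward $x_0$. Set $y_t := (1-t)y + t x_0$ for $t \in (0, 1]$. By convexity of $K^\alpha$, $y_t \in K^\alpha$, and a direct estimate yields $L_k(y_t) \ge c_k(\mu^*) + t\rho$. Choosing $t := \|\mu - \mu^*\|/\rho$, which we may assume is at most $1$ by shrinking $\delta$ once more, makes $L_k(y_t) > c_k(\mu)$ for every $k$, so $y_t \in A_\mu^\alpha$; moreover $\|y_t - y\| \le t\,\mathrm{diam}(X)$. This bounds $\sup_{y \in \overline{A_{\mu^*}^\alpha}} d(y, A_\mu^\alpha) \le (\mathrm{diam}(X)/\rho)\|\mu - \mu^*\|$. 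The reverse direction is symmetric: for $y \in \overline{A_\mu^\alpha}$ one uses the slack $\rho/2$ that $x_0$ has in the $\mu$-inequalities to conclude $L_k(y_t) \ge c_k(\mu) + t\rho/2$, and the choice $t := 2\|\mu - \mu^*\|/\rho$ forces $y_t \in A_{\mu^*}^\alpha$. Together these yield the Hausdorff bound with $c(\alpha) := 2\,\mathrm{diam}(X)/\rho(\alpha)$.

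The only delicate point is keeping the slack $\rho(\alpha)$ and the shrinkage of $\delta$ uniform across the finite collection of constraints defining $A_\mu^\alpha$; since $\alpha$ is fixed and only finitely many half-spaces arise, this poses no substantive obstacle. The key geometric insight exploited is that any $x_0 \in A_{\mu^*}^\alpha$ serves as a common interior witness enabling convex-combination retractions into both $A_\mu^\alpha$ and $A_{\mu^*}^\alpha$ at cost proportional to $\|\mu - \mu^*\|$.
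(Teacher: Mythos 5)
Your proof is correct, and for the key estimate it takes a genuinely different route from the paper. The paper also starts from the half-space description of Lemma~\ref{lem:formula A mu alpha}, writing $A_\mu^\alpha$ as a polyhedron whose constraint matrix is independent of $\mu$ and whose right-hand side is affine in $\mu$; but it then obtains both directions of the Hausdorff bound by invoking the polyhedron-distance theorem of Bergthaller and Singer \cite[Theorem~1.3]{MR1158365} (a Hoffman-type error bound), which directly gives $d(x,A_{\mu^*}^\alpha)\le c_0\|\mu-\mu^*\|$ for $x\in A_\mu^\alpha$ and vice versa. You instead prove the estimate from scratch by a Slater-point retraction: a common interior witness $x_0$ with uniform slack $\rho$ lets you push any closure point of one polytope into the other along the segment toward $x_0$, at cost $O(\|\mu-\mu^*\|/\rho)\cdot\mathrm{diam}(X)$. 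Your argument is self-contained and elementary, at the price of a constant $c(\alpha)=2\,\mathrm{diam}(X)/\rho(\alpha)$ tied to the chosen slack rather than to the constraint matrix alone; the paper's citation buys brevity. For the non-emptiness of $A_\mu^\alpha$ near $\mu^*$ you route through $\mu^*\in W_1$ and the convexity of $Z_\alpha$ (in fact $\mu^*\in Z_\alpha\setminus\partial Z_\alpha$ already gives $\mu^*\in\mathrm{int}(Z_\alpha)$ without convexity), whereas the paper simply observes that a point of $A_{\mu^*}^\alpha$ has positive slack in finitely many strict inequalities and hence persists; both are fine. Two small points to tidy up: justify that the segment $y_t$ stays in $K^\alpha$ by the standard fact that the segment from a point of a relatively open convex set to a point of its closure lies in the set for $t>0$, and note that strictness of $L_k(y_t)>c_k(\mu)$ follows from the strict slack at $x_0$ together with $t>0$ (the case $\mu=\mu^*$ being trivial). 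Neither affects the validity of the argument.
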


\begin{proof}Let $n\ge 1$ and $\alpha=(i_0, i_1,\ldots, i_{n-1}) \in\mathcal{A}^n$.  By Lemma~\ref{lem:formula A mu alpha}, for every $\mu\in\Rr^\ell$,
$$
A_\mu^\alpha = \mathrm{relint}(X)\cap\left(\bigcap_{k=0}^{n-1}  \{x\in \Rr^d\colon C_{i_k} V \varphi^{\alpha_k}(x)>C_{i_k}\mu\}\right).
$$
By hypothesis, $\alpha\in\mathcal{I}_n(f_\mu^*)$, and so $A_{\mu^*}^\alpha\neq\emptyset$. Therefore, by continuity in $\mu$, there is $\delta=\delta(\alpha)>0$ such that $A_\mu^{\alpha}\neq\emptyset$ for every $\mu\in U_\delta(\mu^*)$, thus $\alpha\in \mathcal{I}_{n}(f_{\mu})$.

Now, for every $x\in A_{\mu^*}^{\alpha}$, we have
$$
d(x,\overline{A_{\mu}^{\alpha}}) 
=d\left(x,X\cap \left( \bigcap_{k=0}^{n-1}  \{x\in \Rr^d\colon C_{i_k} V \varphi^{\alpha_k}(x)\geq C_{i_k}\mu\}\right)\right)
= d\left(x,G_\mu\right),
$$
where 
$$
G_\mu = \{x\in \Rr^d\colon Mx\leq c,\,C_{i_k} V \varphi^{\alpha_k}(x)\geq C_{i_k}\mu,\, k=0,\ldots,n-1 \}
$$
and we have used the definition of $X$ in \eqref{defX}. By Theorem~\ref{thm distance to polyhedron}, there is a constant $c=c(\alpha)>0$ such that
$$
d(x,G_\mu)\leq c\| \mu-\mu^*\|.
$$
A similar estimate is valid for $d(x,G_{\mu^*})$ for any $x\in A_\mu^{\alpha}$. 
Taking the maximum of these two estimates, the lemma follows.
\end{proof}

\added{Using the previous lemma, we show that $\{\mathscr{C}_\mu^{(n)}\}_{\mu\in U}$ is stable at $\mu=\mu^*$, an  important property for verifying the hypothesis (E) (see Proposition \ref{lem26}).}
\begin{lem}\label{stable C2}  $\{\mathscr{C}_\mu^{(n)}\}_{\mu\in U}$ is stable at $\mu=\mu^*$.
\end{lem}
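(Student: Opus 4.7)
The plan is to verify the two conditions of Definition \ref{stablep} separately, using Lemma \ref{Hausdorff A} for the forward inclusion together with the Hausdorff estimate, and exploiting the hypothesis $\mu^*\in W_1$ for the reverse inclusion on itineraries.

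First, fix $n\geq 1$. Since $\mathcal{I}_n(f_{\mu^*})\subset \mathcal{A}^n$ is a finite set, Lemma \ref{Hausdorff A} yields constants $c(\alpha),\delta(\alpha)>0$ for every $\alpha\in\mathcal{I}_n(f_{\mu^*})$ such that $\alpha\in \mathcal{I}_n(f_\mu)$ and
$$ d_H(A^{\alpha}_\mu, A^{\alpha}_{\mu^*}) \leq c(\alpha)\,\|\mu-\mu^*\|\qquad \forall\, \mu\in U_{\delta(\alpha)}(\mu^*).$$
Setting $\delta_1:=\min_{\alpha\in \mathcal{I}_n(f_{\mu^*})}\delta(\alpha)>0$, we obtain the inclusion $\mathcal{I}_n(f_{\mu^*})\subset \mathcal{I}_n(f_\mu)$ for every $\mu\in U_{\delta_1}(\mu^*)$ and, at the same time, condition (ii) of Definition \ref{stablep} on the ball $U_{\delta_1}(\mu^*)$.

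Next, to establish the reverse inclusion $\mathcal{I}_n(f_\mu)\subset \mathcal{I}_n(f_{\mu^*})$, recall that, by the very definition of $Z_\alpha$, one has $\alpha\in \mathcal{I}_n(f_\mu)$ if and only if $\mu\in Z_\alpha$. Let $\alpha\in\mathcal{A}^n\setminus\mathcal{I}_n(f_{\mu^*})$; then $\mu^*\notin Z_\alpha$. Since $\mu^*\in W_1$, we also have $\mu^*\notin \partial Z_\alpha$, and therefore $\mu^*\notin \overline{Z_\alpha}$. Because $\mathbb{R}^\ell\setminus \overline{Z_\alpha}$ is open, there exists $\delta'(\alpha)>0$ such that $B_{\delta'(\alpha)}(\mu^*)\cap Z_\alpha=\emptyset$, which means $\alpha\notin \mathcal{I}_n(f_\mu)$ for every $\mu\in U_{\delta'(\alpha)}(\mu^*)$. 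Since $\mathcal{A}^n\setminus\mathcal{I}_n(f_{\mu^*})$ is finite, we may take $\delta_2:=\min_{\alpha\in \mathcal{A}^n\setminus \mathcal{I}_n(f_{\mu^*})}\delta'(\alpha)>0$ (with the convention $\delta_2=+\infty$ if the minimum is taken over the empty set), and then $\mathcal{I}_n(f_\mu)\subset \mathcal{I}_n(f_{\mu^*})$ for every $\mu\in U_{\delta_2}(\mu^*)$.

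Setting $\delta:=\min\{\delta_1,\delta_2\}>0$, conditions (i) and (ii) of Definition \ref{stablep} are satisfied on $U_\delta(\mu^*)$, proving that $\{\mathscr{C}_\mu^{(n)}\}_{\mu\in U}$ is stable at $\mu=\mu^*$. The only subtlety in the argument is the use of $\mu^*\in W_1$: it is precisely what rules out $\mu^*$ being a boundary point of some $Z_\alpha$, i.e., a parameter where a new itinerary of length $n$ could appear (or disappear) under arbitrarily small perturbations.
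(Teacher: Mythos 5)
Your proof is correct and follows essentially the same route as the paper: the forward inclusion and condition (ii) come from Lemma \ref{Hausdorff A} by taking the minimum of the finitely many $\delta(\alpha)$, and the reverse inclusion uses $\mu^*\in W_1$ to exclude boundary points of the convex sets $Z_\alpha$. The only cosmetic difference is that you argue the reverse inclusion directly (each $\alpha\notin\mathcal{I}_n(f_{\mu^*})$ has $\mu^*\notin\overline{Z_\alpha}$, hence a uniform ball avoiding $Z_\alpha$), whereas the paper reaches the same conclusion by contradiction with a sequence and the pigeonhole principle.
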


\begin{proof}
We claim that there is $\delta>0$ such that $\mathcal{I}_{n}(f_{\mu})=\mathcal{I}_{n}(f_{\mu^*})$ for every $\mu\in U_\delta(\mu^*)$. Indeed, by Lemma~\ref{Hausdorff A}, we have $\mathcal{I}_{n}(f_{\mu^*})\subset \mathcal{I}_{n}(f_{\mu})$ for every $\mu\in U_\delta(\mu^*)$ and some $\delta>0$. Now, suppose by contradiction, that there are sequences $(\mu_k)_{k\geq1}$ in $\mathbb{R}^{\ell}$ converging to $\mu^*$ and $(\alpha^{(k)})_{k\ge 1}$ in $\mathcal{A}^n$
 such that $\alpha^{(k)}\in \mathcal{I}_{n}(f_{\mu_k})$ but $\alpha^{(k)}\notin \mathcal{I}_{n}(f_{\mu^*})$ for every $k\geq1$. Since $\# \mathcal{I}_{n}(f_{\mu_k})\leq (\#\mathcal{A})^n$, by the pigeonhole principle, we may take a subsequence and assume that $\alpha^{(k)}$ is independent of $k$, hence we have $\alpha\in \mathcal{I}_{n}(f_{\mu_k})$ for every $k\geq1$ but $\alpha\notin \mathcal{I}_{n}(f_{\mu^*})$. This implies that $\mu_k\in Z_{\alpha}$ for every $k\geq 1$ and $\mu^*\notin Z_{\alpha}$. Thus, $\mu^* \in \partial Z_\alpha$, yielding  a contradiction. Therefore, the claim is true.
Now, using Lemma~\ref{Hausdorff A}, we conclude that for $\delta>0$ small enough and
 for every $\alpha\in\mathcal{I}_{n}(f_{\mu^*})$,  $A_\mu^{\alpha}$ converges to $A_{\mu^*}^{\alpha}$ as $\mu\to\mu^*$ in the Hausdorff metric. Hence,  $\{\mathscr{C}_\mu^{(n)}\}_{\mu\in U}$ is stable at $\mu^*$. 
\end{proof}

Next, we want to show that the multiplicity entropy of $f_{\mu^*}$ is zero. In order to do so, we need to further restrict $\mu^*$ inside another full Lebesgue measure set. Notice that piecewise-affine contractions may have positive multiplicity entropy \cite{KR06b}. We start with a fairly elementary result.

\begin{lem}\label{lem61}
Let $B\subset\mathbb{R}^d$ be a linearly independent set. Then, there is $\rho=\rho(B)>0$ such that for every $w \in \Rr^d$ with $\|w\|<\rho$ we have $w \notin \mathrm{aff}(B)$.
\end{lem}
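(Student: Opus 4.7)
The plan is to reduce the statement to the single observation that a linearly independent set cannot have its affine hull passing through the origin, after which the result follows from the closedness of affine subspaces.

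First I would recall that any linearly independent subset of $\mathbb{R}^d$ has at most $d$ elements, so $B = \{v_1,\ldots,v_k\}$ is finite with $k \le d$. Then I would establish the key claim that $0 \notin \mathrm{aff}(B)$. Indeed, if $0 \in \mathrm{aff}(B)$, then there would exist scalars $\lambda_1,\ldots,\lambda_k$ with $\sum_{i=1}^{k}\lambda_i = 1$ and $\sum_{i=1}^{k}\lambda_i v_i = 0$. But the latter equation forces $\lambda_1 = \cdots = \lambda_k = 0$ by linear independence of $B$, contradicting $\sum_{i=1}^{k}\lambda_i = 1$.

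Next, I would use the fact that $\mathrm{aff}(B)$ is a (closed) affine subspace of $\mathbb{R}^d$, hence the Euclidean distance $\rho := d(0,\mathrm{aff}(B)) = \inf_{y \in \mathrm{aff}(B)} \|y\|$ is strictly positive, precisely because this infimum is attained (by the closest-point projection onto an affine subspace) and $0 \notin \mathrm{aff}(B)$. Finally, for any $w \in \mathbb{R}^d$ with $\|w\| < \rho$, assuming $w \in \mathrm{aff}(B)$ would yield $\rho \le \|w\| < \rho$, a contradiction, so $w \notin \mathrm{aff}(B)$ as required.

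There is essentially no obstacle here; the only point requiring a moment of care is correctly exploiting linear independence (as opposed to merely affine independence) to rule out $0 \in \mathrm{aff}(B)$, after which the argument is immediate.
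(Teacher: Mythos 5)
Your proposal is correct and follows exactly the same route as the paper: show $0\notin\mathrm{aff}(B)$ using linear independence, set $\rho=d(0,\mathrm{aff}(B))>0$, and conclude. You simply spell out the details (the contradiction between $\sum_i\lambda_i=1$ and $\lambda_i\equiv 0$, and the closedness of the affine hull) that the paper leaves implicit.
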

\begin{proof}
Since $B$ is a set of linearly independent vectors, the affine hull $\mathrm{aff}(B)$ does not contain the origin of $\Rr^d$. Set $\rho:=d(0,\mathrm{aff}(B))$, then the conclusion follows.
\end{proof}

In what follows,  keep $1\leq j \leq\ell$ and $1\le k\le d$ unchanged. Let
$$
\mathcal{Y}_j^{(k)}=\left\{(\alpha_1,\ldots,\alpha_k)\in\mathcal{X}^k\colon \{(\Lambda^{\alpha_1})^Tv^{(j)},\ldots, (\Lambda^{\alpha_k})^T v^{(j)} \}\text{ is linearly independent}\right\}.
$$

For every $\boldsymbol{\alpha}=(\alpha_1,\ldots,\alpha_k)\in \mathcal{Y}_j^{(k)}$ and $\rho$ as in Lemma \ref{lem61}, we set  $$\rho_j(\boldsymbol{\alpha}):= \rho(B_j(\boldsymbol{\alpha})),\quad\textrm{where}\quad B_j(\boldsymbol{\alpha}):=\{(\Lambda^{\alpha_1})^Tv^{(j)},\ldots, (\Lambda^{\alpha_k})^T v^{(j)}\}.$$\vspace{-0.2cm}

Given $\boldsymbol{\alpha}=(\alpha_1,\ldots,\alpha_k)\in \mathcal{Y}_j^{(k)}$, let
$$
\mathcal{Z}_{j}(\boldsymbol{\alpha}) :=\left \{\beta\in \mathcal{X}\colon \|(\Lambda^\beta)^T\|<\rho_j(\boldsymbol{\alpha})\quad\mathrm{and}\quad (\Lambda^\beta)^Tv^{(j)}\in\mathrm{span}(B_j(\boldsymbol{\alpha})) \right\}.
$$

Given $\boldsymbol{\alpha}=(\alpha_1,\ldots,\alpha_k)\in \mathcal{Y}_j^{(k)}$ and $\beta\in \mathcal{Z}_{j}(\boldsymbol{\alpha}) $, let $a^{(1)}_{j,\beta}(\boldsymbol{\alpha}),\ldots, a^{(k)}_{j,\beta}(\boldsymbol{\alpha})$  be the coordinates of the vector $(\Lambda^\beta)^T v^{(j)}$ with respect to $B_j(\boldsymbol{\alpha})$, i.e., assume that
\begin{equation}\label{numbers}
(\Lambda^\beta)^T v^{(j)} = \sum_{q=1}^k a^{(q)}_{j,\beta}(\boldsymbol{\alpha}) (\Lambda^{\alpha_q})^T v^{(j)}.
\end{equation}
The following claim is true.\\

\noindent{Claim A.} $\sum_{q=1}^k a^{(q)}_{j,\beta}(\boldsymbol{\alpha})\neq 1$.\\

In fact, by Lemma~\ref{lem61}, we have $
(\Lambda^\beta)^T v_j\notin \mathrm{aff}(B_j(\boldsymbol{\alpha})), 
$
which proves the claim.

Given $1\leq j \leq\ell$, $1\leq k \leq d$, $\boldsymbol{\alpha}\in \mathcal{Y}_j^{(k)}$ and $\beta\in \mathcal{Z}_{j}(\boldsymbol{\alpha})$, let 
$$
\Gamma_{j,\beta}(\boldsymbol{\alpha}):=\left\{\mu \in \Rr^\ell\colon \mu_j = \frac{\langle b_\beta,v^{(j)}\rangle-\sum_{q=1}^k a^{(q)}_{j,\beta}(\boldsymbol{\alpha})\langle b_{\alpha_q},v^{(j)}\rangle}{1-\sum_{q=1}^k a^{(q)}_{j,\beta}(\boldsymbol{\alpha})}\right\}
$$
and
$$
W_2 :=  R^\ell\bigg\backslash\left(\bigcup_{j=1}^\ell\bigcup_{k=1}^d\bigcup_{\boldsymbol{\alpha}\in\mathcal{Y}_j^{(k)}}\bigcup_{\beta\in\mathcal{Z}_{j}(\boldsymbol{\alpha})}\Gamma_{j,\beta}(\boldsymbol{\alpha})\right).
$$
Notice that, by Claim A, $W_2$ is well-defined.
\begin{lem}
$ W_2$ is a full Lebesgue measure set.
\end{lem}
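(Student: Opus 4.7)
The plan is to show that $W_2$ is the complement of a countable union of affine hyperplanes in $\mathbb{R}^{\ell}$, each of which has Lebesgue measure zero, so $W_2$ has full measure.

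First, I would check that each $\Gamma_{j,\beta}(\boldsymbol{\alpha})$ is a genuine affine hyperplane. Fix $1\le j\le \ell$, $1\le k\le d$, $\boldsymbol{\alpha}=(\alpha_1,\ldots,\alpha_k)\in\mathcal{Y}_j^{(k)}$ and $\beta\in\mathcal{Z}_j(\boldsymbol{\alpha})$. By Claim A, the denominator $1-\sum_{q=1}^k a_{j,\beta}^{(q)}(\boldsymbol{\alpha})$ is nonzero, so
$$
c_{j,\beta}(\boldsymbol{\alpha}):=\frac{\langle b_\beta,v^{(j)}\rangle-\sum_{q=1}^k a^{(q)}_{j,\beta}(\boldsymbol{\alpha})\langle b_{\alpha_q},v^{(j)}\rangle}{1-\sum_{q=1}^k a^{(q)}_{j,\beta}(\boldsymbol{\alpha})}
$$
is a well-defined real number. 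Consequently $\Gamma_{j,\beta}(\boldsymbol{\alpha})=\{\mu\in\mathbb{R}^\ell:\mu_j=c_{j,\beta}(\boldsymbol{\alpha})\}$ is an affine hyperplane in $\mathbb{R}^\ell$, hence has Lebesgue measure zero.

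Next, I would show that the index set of the union defining the complement of $W_2$ is countable. The indices $j$ and $k$ range over finite sets. Since $\mathcal{A}=\{-1,1\}^\ell$ is finite, the set $\mathcal{X}=\bigcup_{n\ge 1}\mathcal{A}^n$ is countable, so $\mathcal{Y}_j^{(k)}\subset\mathcal{X}^k$ is countable, and for each $\boldsymbol{\alpha}\in\mathcal{Y}_j^{(k)}$ the set $\mathcal{Z}_j(\boldsymbol{\alpha})\subset\mathcal{X}$ is also countable.

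Therefore, the complement of $W_2$ is a countable union of Lebesgue null sets in $\mathbb{R}^\ell$, so it is a null set, and $W_2$ has full Lebesgue measure. The only subtlety is the verification that the denominator in the definition of $\Gamma_{j,\beta}(\boldsymbol{\alpha})$ does not vanish, and this is precisely the content of Claim A, which was already established using Lemma~\ref{lem61}. No further obstacle is expected.
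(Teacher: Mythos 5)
Your proposal is correct and follows essentially the same argument as the paper: each $\Gamma_{j,\beta}(\boldsymbol{\alpha})$ is a well-defined hyperplane (the nonvanishing of the denominator being exactly Claim A), the index set is countable, and a countable union of hyperplanes is Lebesgue null. Your write-up simply makes the countability and well-definedness checks more explicit than the paper does.
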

\begin{proof}
Since each set $\Gamma_{j,\beta}(\boldsymbol{\alpha})$ is a hyperplane in $\Rr^\ell$ and $W_2$ equals the whole $ \Rr^\ell$ but a countable union of such hyperplanes, it follows that $W_2$ has full Lebesgue measure.
\end{proof}

Given $\mu\in\Rr^\ell$ and $n\geq1$, define the following collection of hyperplanes
$$
\mathcal{H}^{(n)}_\mu:=\{(\varphi^\alpha)^{-1}(H_j(\mu))\colon  \alpha\in \mathcal{A}^k, \, 0\leq k\le n-1,\, 1\leq j\leq \ell\},
$$
where $\varphi^0=\mathrm{id}$ is the identity map and $\mathcal{A}^{0}:=\{0\}$. Denote by $\mathcal{D}^{(n)}_\mu$  the collection of  connected components of
$
\Rr^d\setminus \bigcup_{H\in \mathcal{H}^{(n)}_\mu}H.
$ 
Notice that $\text{mult} (\mathcal{D}^{(n)}_\mu)$ is bounded by the number of regions of a central hyperplane arrangement \added{(see \cite[Corollary, p. 816]{Winder})}, i.e.,
\begin{equation}\label{1070}
\text{mult} (\mathcal{D}^{(n)}_\mu)\leq 2\sum_{k=0}^{d-1} {m_\mu^{(n)}-1 \choose k},
\end{equation}
where 
$$
m_\mu^{(n)}:=\max_{x\in\Rr^d}\#\{H\in \mathcal{H}^{(n)}_\mu\colon x\in H\}.
$$

\begin{lem}\label{lem:m bounded} For every $\mu\in W_2$, the sequence $(m_\mu^{(n)})_{n\geq1}$ is bounded. 
\end{lem}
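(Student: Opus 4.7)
The plan is to reduce the problem to bounding, for each $j\in\{1,\ldots,\ell\}$, the number of hyperplanes of the form $(\varphi^\alpha)^{-1}(H_j(\mu))$ that can pass through any fixed point $x\in\Rr^d$; summing these bounds over $j$ yields a uniform upper bound on $m_\mu^{(n)}$. The core idea is that $\mu\in W_2$ excludes the countable family of hyperplanes $\Gamma_{j,\beta}(\boldsymbol{\alpha})\subset\Rr^\ell$, so certain linearly dependent configurations of normals cannot be realised simultaneously by hyperplane incidences.

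Fix $j$ and $x\in\Rr^d$, and let $S:=\{\alpha\in\bigcup_{k\geq 0}\mathcal{A}^k : x\in(\varphi^\alpha)^{-1}(H_j(\mu))\}$. Writing $n_\alpha:=(\Lambda^\alpha)^T v^{(j)}$, membership $\alpha\in S$ is equivalent to the scalar relation $\langle n_\alpha,x\rangle=\mu_j-\langle v^{(j)},b_\alpha\rangle$. Set $r:=\dim\mathrm{span}\{n_\alpha:\alpha\in S\}\leq d$, and choose a linearly independent $r$-tuple $\boldsymbol{\alpha}=(\alpha_1,\ldots,\alpha_r)\in S^r$ that maximises $\rho_j(\boldsymbol{\alpha})$ over all such choices from $S$; then $\boldsymbol{\alpha}\in\mathcal{Y}_j^{(r)}$. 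For any $\beta\in S\setminus\{\alpha_1,\ldots,\alpha_r\}$, expand $n_\beta=\sum_{q=1}^r a_q n_{\alpha_q}$. Substituting the hyperplane identities for $\alpha_q$ and $\beta$ into this expansion gives
\[
\Bigl(1-\sum_q a_q\Bigr)\mu_j=\langle v^{(j)},b_\beta\rangle-\sum_q a_q \langle v^{(j)},b_{\alpha_q}\rangle.
\]
If $\beta\in\mathcal{Z}_j(\boldsymbol{\alpha})$, then Claim~A (which rests on Lemma~\ref{lem61}) yields $\sum_q a_q\neq 1$, and the above identity rearranges to $\mu\in\Gamma_{j,\beta}(\boldsymbol{\alpha})$, contradicting $\mu\in W_2$. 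Hence $\beta\notin\mathcal{Z}_j(\boldsymbol{\alpha})$; since $n_\beta\in\mathrm{span}\,B_j(\boldsymbol{\alpha})$ is automatic, this forces $\|(\Lambda^\beta)^T\|\geq\rho_j(\boldsymbol{\alpha})$, and from $\|(\Lambda^\beta)^T\|\leq\lambda^{|\beta|}$ we deduce $|\beta|\leq -\log\rho_j(\boldsymbol{\alpha})/|\log\lambda|$. Therefore
\[
|S|\leq r+\#\bigl\{\beta\in\textstyle\bigcup_k\mathcal{A}^k : |\beta|\leq -\log\rho_j(\boldsymbol{\alpha})/|\log\lambda|\bigr\},
\]
which is finite for every value of $\rho_j(\boldsymbol{\alpha})>0$.

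The main obstacle is to promote this per-$x$ finiteness to a bound on $|S|$ that is uniform in $x$, i.e., to bound $\rho_j(\boldsymbol{\alpha})$ below by a positive constant independent of the (a priori $x$-dependent) choice of $\boldsymbol{\alpha}\subset S$. The plan for this step is to exploit that the normals $\{n_\alpha:\alpha\in\bigcup_k\mathcal{A}^k\}$ form a countable subset of the finite-dimensional subspace $V_j:=\mathrm{span}\{n_\alpha\}\subset\Rr^d$, and in particular $V_j$ is already spanned by the normals of words of some bounded length $L_j$. A pigeonhole-type argument should then show that whenever $|S|$ exceeds a threshold depending only on $L_j$ and $|\mathcal{A}|$, the set $S$ must contain a linearly independent $r$-tuple realising $\rho_j\geq \rho_j^*$ for some universal positive constant $\rho_j^*=\rho_j^*(\mu,j)$. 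Combined with the estimate above, this gives the required uniform bound on $|S|$, and hence on $m_\mu^{(n)}$. This final combinatorial step is expected to be the most delicate part of the proof.
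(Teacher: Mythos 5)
Your per-point argument is correct and coincides with the algebraic core of the paper's proof: from a linearly independent tuple $\boldsymbol{\alpha}$ and a word $\beta\in\mathcal{Z}_j(\boldsymbol{\alpha})$ whose hyperplanes pass through a common point, Claim~A and the relation \eqref{numbers} yield $\mu\in\Gamma_{j,\beta}(\boldsymbol{\alpha})$, contradicting $\mu\in W_2$. However, the lemma asserts a bound on $m_\mu^{(n)}=\max_{x\in\Rr^d}\#\{H\in\mathcal{H}^{(n)}_\mu\colon x\in H\}$ uniform in $n$, and since the maximum runs over all $x$, the per-point finiteness you establish (with a bound depending on $\rho_j(\boldsymbol{\alpha})$, hence on $x$ through $S$) does not suffice. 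You acknowledge this and leave the uniformisation as a plan; that step is the actual content of the lemma, so the proof is incomplete as it stands.

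Moreover, the route you sketch for closing the gap cannot work as stated: there is no $x$-independent constant $\rho_j^*>0$ such that every sufficiently large concurrent family $S$ contains an independent tuple with $\rho_j(\boldsymbol{\alpha})\ge\rho_j^*$. Indeed, if every word of $S$ has length at least $M$, then every normal $(\Lambda^\alpha)^Tv^{(j)}$ with $\alpha\in S$ has norm at most $\lambda^M$, and since $\rho_j(\boldsymbol{\alpha})=\rho(B_j(\boldsymbol{\alpha}))\le\min_q\|(\Lambda^{\alpha_q})^Tv^{(j)}\|$, every tuple drawn from $S$ has $\rho_j(\boldsymbol{\alpha})\le\lambda^M$, with $M$ not controlled uniformly in $x$. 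What has to be exploited is not a universal lower bound on $\rho_j(\boldsymbol{\alpha})$ but the interplay between $\rho_j(\boldsymbol{\alpha})$ and the norms $\|(\Lambda^\beta)^T\|$ of the remaining words of $S$: a large concurrent family necessarily contains words $\beta$ much longer than those used to form $\boldsymbol{\alpha}$, and one must show that some such $\beta$ satisfies $\|(\Lambda^\beta)^T\|<\rho_j(\boldsymbol{\alpha})$, i.e.\ $\beta\in\mathcal{Z}_j(\boldsymbol{\alpha})$. This is precisely the extraction the paper performs in the contrapositive: assuming $m_\mu^{(n_p)}\ge p$, it produces for $p$ large a point $x_p$, an index $j$, a tuple $\boldsymbol{\alpha}\in\mathcal{Y}_j^{(k)}$ and a word $\beta\in\mathcal{Z}_j(\boldsymbol{\alpha})$ whose hyperplanes all contain $x_p$, and then concludes $\mu\in\Gamma_{j,\beta}(\boldsymbol{\alpha})$ exactly as you do. Your proposal therefore reproduces the paper's mechanism but omits its decisive combinatorial step, and the replacement you propose for it would fail.
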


\begin{proof}
Suppose, by contradiction, that there is $\mu\in W_2$ such that $(m_\mu^{(n)})_{n\ge 1}$ is an unbounded sequence. Thus, there is an increasing integer sequence $(n_p)_{p\in\Nn}$  such that $m_\mu^{(n_p)}\geq p$ for every $p\in\Nn$. For each $p\in\Nn$, let $x_p\in\Rr^d$ be such that \mbox{$\#\{H\in \mathcal{H}^{(n_p)}_\mu\colon x_p\in H\}\geq p$}. Using the pigeonhole principle and taking $p$ sufficiently large, there exist $1\leq j \leq\ell$, $1\leq k \leq d$, $\boldsymbol{\alpha}=(\alpha_1,\ldots,\alpha_k)\in\mathcal{Y}_j^{(k)}$ and $\beta\in\mathcal{Z}_j(\boldsymbol{\alpha})$ such that
$$
\begin{cases}
\langle \varphi^{\alpha_q}(x_p),v^{(j)}\rangle = \mu_j,\quad q=1,\ldots,k\\
\langle \varphi^\beta(x_p),v^{(j)} \rangle = \mu_j.
\end{cases}
$$
The previous system is equivalent to 
\begin{equation}\label{10102024}
\begin{cases}
\langle x_p,(\Lambda^{\alpha_q})^Tv^{(j)}\rangle = \mu_j-\langle b_{\alpha_q},v^{(j)}\rangle,\quad q=1,\ldots,k\\
\langle x_p,(\Lambda^\beta)^Tv^{(j)}\rangle = \mu_j-\langle b_\beta,v^{(j)}\rangle.
\end{cases}
\end{equation}
By Claim A, $(\Lambda^\beta)^T v^{(j)} = \sum_{q=1}^k a^{(q)}_{j,\beta}(\boldsymbol{\alpha}) (\Lambda^{\alpha_q})^Tv^{(j)}$ with $\sum_{q=1}^k a^{(q)}_{j,\beta}(\boldsymbol{\alpha})\neq1$. Then, by \eqref{10102024}, we obtain
$$
\mu_j\left(1-\sum_{q=1}^k a^{(q)}_{j,\beta}(\boldsymbol{\alpha})\right)=\langle b_\beta,v_j\rangle-\sum_{q=1}^k a^{(q)}_{j,\beta}(\boldsymbol{\alpha})\langle b_{\alpha_q},v_j\rangle,
$$
which implies that $\mu\in\Gamma_{j,\beta}(\boldsymbol{\alpha})$, yielding a contradiction.
\end{proof}

Using the previous lemma and the fact that $\text{mult} (\mathscr{C}_\mu^{(n)}) \leq \text{mult} (\mathcal{D}^{(n)}_\mu)$, we get the following result.
\begin{lem}\label{lem59}
$H_{\mathrm{mult}}(f_{\mu})=0$ every $\mu\in W_2$.
\end{lem}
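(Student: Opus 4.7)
The plan is to derive the vanishing of the multiplicity entropy as a direct consequence of the already established bound in \eqref{1070} and the boundedness provided by Lemma~\ref{lem:m bounded}, so most of the real work has been done. First I would justify the inequality $\mathrm{mult}(\mathscr{C}_\mu^{(n)})\leq\mathrm{mult}(\mathcal{D}_\mu^{(n)})$ that is invoked in the paragraph preceding the lemma. By Lemma~\ref{lem:formula A mu alpha}, every $A_\mu^{\alpha}\in\mathscr{C}_\mu^{(n)}$ is cut out by the facet hyperplanes of $X$ together with sign conditions with respect to the hyperplanes of $\mathcal{H}_\mu^{(n)}$; hence each $A_\mu^\alpha$ is contained in exactly one cell of the arrangement $\mathcal{D}_\mu^{(n)}$. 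Consequently, for every point $x$, the number of closures $\overline{A_\mu^\alpha}$ containing $x$ is at most the number of closed cells of $\mathcal{D}_\mu^{(n)}$ containing $x$, which yields the claimed multiplicity inequality.

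Next, I would fix $\mu\in W_2$ and invoke Lemma~\ref{lem:m bounded} to choose $M\geq 1$ so that $m_\mu^{(n)}\leq M$ for every $n\geq 1$. Plugging this into \eqref{1070} gives
$$
\mathrm{mult}(\mathscr{C}_\mu^{(n)})\,\leq\,\mathrm{mult}(\mathcal{D}_\mu^{(n)})\,\leq\,2\sum_{k=0}^{d-1}\binom{M-1}{k}\,=:\,K,
$$
a finite constant depending only on $M$ and $d$, but crucially independent of $n$.

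Finally, taking logarithms, dividing by $n$, and passing to the limit superior yields
$$
H_{\mathrm{mult}}(f_\mu)\,=\,\limsup_{n\to\infty}\frac{1}{n}\log\mathrm{mult}(\mathscr{C}_\mu^{(n)})\,\leq\,\limsup_{n\to\infty}\frac{\log K}{n}\,=\,0,
$$
and since $\mathrm{mult}(\mathscr{C}_\mu^{(n)})\geq 1$ the quantity is also nonnegative, so $H_{\mathrm{mult}}(f_\mu)=0$. I do not expect any real obstacle here: the genuinely delicate step, namely showing that for $\mu$ avoiding the hyperplane exceptional set $W_2^c$ no new concurrent hyperplanes appear as $n$ grows, has already been handled in Lemma~\ref{lem:m bounded} through the linear-independence analysis of the vectors $(\Lambda^{\alpha})^T v^{(j)}$. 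The proof of Lemma~\ref{lem59} is purely a combinatorial/asymptotic bookkeeping step that converts that uniform bound into sub-exponential (in fact bounded) growth of the multiplicity.
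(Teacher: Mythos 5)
Your argument is the same as the paper's: Lemma~\ref{lem:m bounded} gives a uniform bound on $m_\mu^{(n)}$, which via \eqref{1070} bounds $\mathrm{mult}(\mathcal{D}^{(n)}_\mu)$, hence $\mathrm{mult}(\mathscr{C}^{(n)}_\mu)$, independently of $n$, and a bounded multiplicity trivially has zero exponential growth rate. The asymptotic bookkeeping at the end is correct. The one step that needs repair is your justification of $\mathrm{mult}(\mathscr{C}_\mu^{(n)})\le\mathrm{mult}(\mathcal{D}_\mu^{(n)})$ (a fact the paper asserts without proof): it is \emph{not} true in general that each $A_\mu^{\alpha}$ lies in a single cell of the arrangement, because $\mathcal{H}^{(n)}_\mu$ contains $(\varphi^{\beta})^{-1}(H_j(\mu))$ for \emph{every} word $\beta$ of length $0\le k\le n-1$, and a hyperplane coming from a word $\beta$ that is not a prefix of $\alpha$ can perfectly well cut through $A_\mu^{\alpha}$, so $A_\mu^{\alpha}$ may meet several cells. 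What is true, and what suffices, is the dual separation property: if $\alpha\neq\alpha'$ belong to $\mathcal{I}_n(f_\mu)$ and $j$ is the first index at which they differ, then the map $\psi_j$ of Lemma~\ref{lem:formula A mu alpha} is the same for both itineraries and, since $i_j\neq i_j'$ differ in some coordinate $k$, the sets $A_\mu^{\alpha}$ and $A_\mu^{\alpha'}$ lie on opposite open sides of the hyperplane $\psi_j^{-1}(H_k(\mu))\in\mathcal{H}^{(n)}_\mu$; hence every cell of $\mathcal{D}^{(n)}_\mu$ meets at most one element of $\mathscr{C}^{(n)}_\mu$. Given $x$, one then assigns to each $A\in\mathscr{C}^{(n)}_\mu$ with $x\in\overline{A}$ a cell $D$ with $D\cap A\neq\emptyset$ and $x\in\overline{D}$ (take a limit of the cells containing points of $A$ approaching $x$); by the separation property this assignment is injective, which yields the multiplicity inequality and completes the proof.
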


Set
$$
W=W_1\cap W_2. 
$$

With the next two lemmas, in light of Theorem~\ref{th:main}, we conclude the proof of Theorem~\ref{thm:multidim}.

\begin{lem}
For every $\mu^*\in W$, $\{f_\mu\}_{\mu\in U}$ satisfies Hypothesis (E) at $\mu=\mu^*$.
\end{lem}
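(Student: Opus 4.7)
The plan is to apply Proposition~\ref{lem26}, which reduces Hypothesis (E) to the two conditions: (a) stability of $\{\mathscr{C}_\mu^{(n)}\}_{\mu\in U}$ at $\mu^*$ for every $n\geq 1$, and (b) vanishing multiplicity entropy $H_{\mathrm{mult}}(f_{\mu^*})=0$. Both ingredients have already been prepared in the previous lemmas, so the proof will amount to invoking them at a point $\mu^*\in W=W_1\cap W_2$.

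First I would note that Lemma~\ref{stable C2} gives condition (a): for every $\mu^*\in W_1$ and every $n\geq 1$, the collection $\{\mathscr{C}_\mu^{(n)}\}_{\mu\in U}$ is stable at $\mu^*$. This uses the fact that $\mu^*$ avoids the boundaries of the convex sets $Z_\alpha$, so that the finitely many itineraries of order $n$ remain constant in a neighborhood of $\mu^*$ and the cells $A_\mu^\alpha$ converge in the Hausdorff metric to $A_{\mu^*}^\alpha$, as established via the quantitative estimate in Lemma~\ref{Hausdorff A}.

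Next I would invoke Lemma~\ref{lem59}, which provides condition (b): for every $\mu^*\in W_2$ one has $H_{\mathrm{mult}}(f_{\mu^*})=0$. This relied on bounding $\mathrm{mult}(\mathscr{C}_\mu^{(n)})$ by $\mathrm{mult}(\mathcal{D}_\mu^{(n)})$, then using the central hyperplane arrangement bound \eqref{1070} together with the uniform bound on $m_\mu^{(n)}$ obtained in Lemma~\ref{lem:m bounded}; the latter is precisely the reason for excluding the exceptional hyperplanes $\Gamma_{j,\beta}(\boldsymbol{\alpha})$ in the definition of $W_2$.

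Since $\mu^*\in W=W_1\cap W_2$, both hypotheses of Proposition~\ref{lem26} hold simultaneously at $\mu^*$, and therefore $\{f_\mu\}_{\mu\in U}$ satisfies Hypothesis (E) at $\mu^*$. I do not anticipate a serious obstacle here since all the work has been front-loaded into the previous lemmas; the only thing to verify is that the definitions match up, in particular that the neighborhood $U=B_1(\mu^*)$ chosen earlier is consistent with the statement of Proposition~\ref{lem26}, which it is because stability and the multiplicity entropy are local properties at $\mu^*$.
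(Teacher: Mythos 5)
Your proposal is correct and follows exactly the paper's argument: the paper's proof is a one-line application of Proposition~\ref{lem26} combined with Lemma~\ref{stable C2} (stability at $\mu^*\in W_1$) and Lemma~\ref{lem59} (vanishing multiplicity entropy for $\mu^*\in W_2$). Your additional commentary on why each prior lemma applies is accurate but not needed beyond the citation.
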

\begin{proof} By Proposition \ref{lem26}, the claim follows from Lemma \ref{stable C2} and Lemm~\ref{lem59}.
\end{proof}

\begin{lem}
For every $\mu^*\in W$, $\{f_\mu\}_{\mu\in U}$ satisfies Hypothesis (T) at $\mu=\mu^*$.
\end{lem}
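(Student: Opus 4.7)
The plan is to choose $x_0$ to be any point of $X$ and exploit the fact that in this family the iterated function system does not depend on $\mu$, so $\varphi_\mu^\alpha = \varphi^\alpha$ is the same affine map for every $\mu$, and all the $\mu$-dependence of the event $\{d(\varphi_\mu^\alpha(x_0), S_\mu) \le \varepsilon\}$ comes through $S_\mu$. The first step is to identify $S_\mu$ concretely: from the definition of $A_{i,\mu} = \mathrm{relint}(X) \cap \mathrm{relint}(\{x \in X : \sigma_\mu(x) = i\})$, any singular point either lies in the relative boundary $\partial X$ or is separated from at least one neighbouring cell by one of the slicing hyperplanes, so
\begin{equation*}
S_\mu \subset \partial X \cup \bigcup_{j=1}^\ell H_j(\mu).
\end{equation*}

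Next I would bound the distance of $\varphi^\alpha(x_0)$ to $\partial X$ from below, uniformly in $\alpha$. By hypothesis \eqref{smap}, each $\varphi_i(X)$ is a compact subset of $\mathrm{relint}(X)$, hence $\eta := \min_{i\in\mathcal{A}} d(\varphi_i(X), \partial X) > 0$. For any $\alpha \in \mathcal{A}^n$ with $n \ge 1$, we have $\varphi^\alpha(x_0) \in \varphi_{i_{n-1}}(X)$, so $d(\varphi^\alpha(x_0), \partial X) \ge \eta$. Setting $\varepsilon_0 := \eta/2$ and $n_0 := 1$, for every $\varepsilon \in (0, \varepsilon_0)$ the inequality $d(\varphi^\alpha(x_0), S_\mu) \le \varepsilon$ forces $\varphi^\alpha(x_0)$ to be within $\varepsilon$ of one of the hyperplanes $H_j(\mu)$, rather than of $\partial X$.

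Since $v^{(j)}$ is a unit vector, $d(\varphi^\alpha(x_0), H_j(\mu)) = |\langle v^{(j)}, \varphi^\alpha(x_0)\rangle - \mu_j|$, so the event of interest forces $\mu_j$ into an interval of length $2\varepsilon$ centred at $\langle v^{(j)}, \varphi^\alpha(x_0)\rangle$ for some index $j$. For fixed $\delta_0 > 0$ (any choice, e.g.\ $\delta_0 = 1$) and fixed $j$, intersecting this slab with $U_{\delta_0}(\mu^*) \subset \mathbb{R}^\ell$ yields a set of Lebesgue measure at most $2\varepsilon \cdot (2\delta_0)^{\ell - 1}$. Taking a union over $j \in \{1, \ldots, \ell\}$ gives
\begin{equation*}
\mathrm{Leb}^*\left(\left\{\mu \in U_{\delta_0}(\mu^*) : d(\varphi^\alpha(x_0), S_\mu) \le \varepsilon\right\}\right) \le 2\ell (2\delta_0)^{\ell - 1} \varepsilon,
\end{equation*}
for every $\alpha \in \mathcal{J}_n^{\delta_0}(\mu^*)$ with $n \ge 1$. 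This establishes Hypothesis (T) at $\mu^*$ with exponent $a = 1$ and constant $c = 2\ell (2\delta_0)^{\ell - 1}$.

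The proof is essentially mechanical once the correct decomposition of $S_\mu$ is written down; no real obstacle arises because the $\mu$-dependence enters only through the affine parameters $\mu_j$ of the slicing hyperplanes, which are exactly the coordinates of the parameter, and because the fixed IFS satisfies the inward-invariance condition \eqref{smap} that uniformly separates orbits from the boundary of $X$. In particular, no appeal to the set $W$ is needed in verifying Hypothesis (T): the conclusion holds at every $\mu^* \in \mathbb{R}^\ell$.
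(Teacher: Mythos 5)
Your proof is correct and follows essentially the same route as the paper's: reduce the event $d(\varphi^\alpha(x_0),S_\mu)\le\varepsilon$ to closeness to one of the hyperplanes $H_j(\mu)$, rewrite that as $|\langle v^{(j)},\varphi^\alpha(x_0)\rangle-\mu_j|\le\varepsilon$, and apply a slab/union bound in the parameter $\mu$, yielding Hypothesis (T) with $a=1$ (and, as you note, with no use of the set $W$). Your treatment is in fact slightly more careful than the paper's, which takes $x_0=0$ and $\varepsilon_0=1$ without explicitly ruling out closeness to the relative boundary of $X$; your choice of $x_0\in X$ and $\varepsilon_0=\eta/2$ via the inward-invariance condition \eqref{smap} closes that small gap.
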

\begin{proof} Given $\epsilon>0$ and $\alpha\in\mathcal{X}$,   we have 
\begin{align*}
\text{Leb}^*\big(\left\{\mu\in U\colon d(\varphi^{\alpha}(0),S_{\mu}) \leq \varepsilon   \right\}\big)
 & \leq \sum_{j=1}^\ell \text{Leb}^*\big(\left\{\mu\in U \colon d(\varphi^\alpha(0),H_j(\mu))\leq \varepsilon\right\}\big)\\
 & = \sum_{j=1}^\ell \text{Leb}^*\big(\left\{\mu\in U\colon |\langle \varphi^\alpha(0),v^{(j)}\rangle -\mu_j|\leq \varepsilon\right\}\big)    \\\
 &\leq 2^d\ell\varepsilon.
\end{align*}
Thus, Hypothesis $(T)$ holds true provided $x_0=0$, $\varepsilon_0=1$, $\delta_0=1$, $n_0=1$, $a=1$ and $c=2^d\ell$. 
\end{proof}

\section{Proof of Corollary~\ref{cor:3}} \label{proofs corollaries}

Assume that $\varphi_i$, $i\in\mathcal{A}$, are contractive homotheties, i.e., there exist $\lambda_i\in (0,1)$, $b_i\in\mathbb{R}^d$, $i\in\mathcal{A}$, such that $\varphi_i(x) = \lambda_i x + b_i$ for all $x\in\mathbb{R}^d.$ 
Given $\alpha= (i_0, i_1,\ldots, i_{n-1})\in\mathcal{A}^n$, $n\ge 1$,  the map $\varphi^{\alpha}$ defined by \eqref{4344} satisfies
$$ \varphi^{\alpha}(x) = \lambda^\alpha x + b^{\alpha},\quad\forall x\in\mathbb{R}^d,
$$
where $b^\alpha =\varphi^\alpha(0)$ and $\lambda^\alpha=\lambda_{i_{n-1}}\cdots\lambda_{i_0}$. Let
$$
N_\alpha = \left\{\mu\in \Rr^\ell \colon \exists\,j\in\{1,\ldots,\ell\},\, \mu_j = \frac{\left\langle  v^{(j)}, b^{\alpha}\right\rangle}{1-\lambda^\alpha}\right\}.
$$
The set $N_\alpha$ is a union of $\ell$ hyperplanes. Thus, $N_\alpha$ has zero Lebesgue measure. 
By Theorem~\ref{thm:multidim}, there exists a full Lebesgue measure set $W\subset\mathbb{R}^{\ell}{\setminus}\left( \bigcup_{n\geq1}\bigcup_{\alpha\in\mathcal{A}^n}N_\alpha\right)$ such that $f_\mu$ is asymptotically periodic on $Z(f_\mu)$ for all
$\mu\in W$. Hereafter, let $\mu\in W$ be fixed. In light of Remark~\ref{rem1}, either $f_\mu$ is asymptotically periodic on the entire domain $X$ or there exists $x\in X$ such that $\bigcup_{n\ge 0}\{f^n_\mu(x)\}\cap Z(f_\mu)=\emptyset$. Assume that this second alternative happens, i.e., $\bigcup_{n\ge 0}\{f^n_\mu(x)\}\subset X{\setminus} Z(f_\mu)$.
By hypothesis, $\varphi_i(X)\subset \mathrm{relint}(X)$ for every $i\in\mathcal{A}$. Hence, by the pigeonhole principle, there exists
$1\le j\le \ell$ such that the forward \mbox{$f_\mu$-orbit} of $x$ intersects the hyperplane $H_j(\mu)$ infinitely many times. In particular, there exist $n\ge 1$ and $y\in H_j(\mu) $ such that $f^n_\mu(y)\in H_j(\mu)$. Algebraically, 
$$ \left\langle v^{(j)}, y \right\rangle = \mu_j =  \left\langle v^{(j)}, f_\mu^n(y) \right\rangle.
$$
Since $f_\mu^n(y) = \lambda^\alpha y + b^{\alpha}$ for some $\alpha\in \mathcal{A}^n$, we obtain
$$ 
\mu_j =  \left\langle v^{(j)}, f_\mu^n(y) \right\rangle =  \left\langle v^{(j)}, \lambda^\alpha y + b^{\alpha} \right\rangle = \lambda^\alpha \mu_j +  \left\langle v^{(j)}, b^{\alpha} \right\rangle, 
$$
that is, $\mu_j = \left\langle  v^{(j)}, b^{\alpha}\right\rangle\big/(1-\lambda^\alpha)$, contradicting the fact that $\mu\not\in N_\alpha$. In this way, only the first alternative happens and $f_\mu$ is asymptotically periodic on the entire domain $X$.
A similar argument shows that the orbits of $f_\mu$ are regular.\qed

\appendix
\section{Distance to a polyhedron}
Let $A$ be an $m\times n$ matrix and $\beta=\beta(A)$ be the least number such that for each non-singular submatrix $B$ of $A$ all entries of $B^{-1}$ are at most $\beta$ in absolute value. Given $b\in\Rr^m$, let $G_b$ be the polyhedron
$$
G_b=\{x\in\Rr^n\colon Ax\leq b\}.
$$
Denote by $\|\cdot\|_{\infty}$ the $\ell^{\infty}$-norm.

\begin{thm}[Theorem 0.1 in \cite{MR1158365}]\label{thm distance to polyhedron} Given $b_0\in\Rr^m$ and $b\in\Rr^m$, suppose that $G_{b_0}\neq\emptyset$ and $G_{b}\neq\emptyset$, respectively. For every $x_0 \in G_{b_0}$,
$$
\inf_{x\in G_b} \|x-x_0\|_{\infty}\leq n \beta \|b-b_0\|_{\infty}.
$$
\end{thm}


\bibliographystyle{amsplain}
\bibliography{Bibfile}

\end{document}